\pgfplotsset{compat=1.18}
\numberwithin{equation}{section}
\theoremstyle{plain} 
\newtheorem{claim}[equation]{Claim}
\newtheorem{prop}[equation]{Proposition}
\newtheorem{cor}[equation]{Corollary}
\newtheorem{lemma}[equation]{Lemma}
\newtheorem{rmk}[equation]{Remark}
\newtheorem{hypothesis}[equation]{Hypothesis}
\theoremstyle{definition}
\newtheorem{defi}[equation]{Definition}
\DeclareMathOperator*{\colim}{colim}
\definecolor{lightblueforque}{HTML}{8fcaff}
\newtheorem{protoerr}{ERROR}
\newtheorem{protoque}[protoerr]{QUESTION}
\newtheorem{prototbe}[protoerr]{To be expanded}
\newtheorem{protoysr}[protoerr]{You should remember}
\definecolor{colarr2}{rgb}{1.0,0,0}
\definecolor{colarr3}{rgb}{0.5,0,0.5}
\definecolor{colarr4}{rgb}{0,0,1.0}
\definecolor{colarr5}{rgb}{1,0.6,0}
\definecolor{colarr6}{rgb}{0,1.0,0}
\definecolor{colarr7}{rgb}{0.5,0.5,0}
\definecolor{colarr8}{rgb}{0,0.5,0.5}
\definecolor{colarr12}{RGB}{255,102,102}
\definecolor{colarr18}{RGB}{150,39,240}
\definecolor{faintcolarr2}{rgb}{1.0,0.5,0.3}
\definecolor{faintcolarr4}{rgb}{0.5,0.5,1.0}
\definecolor{faintcolarr8}{rgb}{0.2,1.0,1.0}
\definecolor{faintcolarr16}{rgb}{0.2,1.0,0.2}
\definecolor{colnewp}{rgb}{0.4,0,0.7}
\definecolor{colsigma}{rgb}{0.1,1.0,0.3}
\title{A motivic Greenlees spectral sequence towards motivic Hochschild homology.}
\author{Federico Ernesto Mocchetti\footnote{Università degli Studi di Milano - Universit{\"a}t Osnabr{\"u}ck}}
\date{\today}
\begin{document}

\maketitle
\thispagestyle{empty}

\begin{abstract}
    {\footnotesize
    We define a motivic Greenlees spectral sequence by characterising an associated $t$-structure. We then examine a motivic version of topological Hochschild homology for the motivic cohomology spectrum modulo a prime number $p$. Finally, we use the motivic Greenlees spectral sequence to determine the homotopy ring of a related spectrum, given that the base field is algebraically closed with a characteristic that is coprime to $p$. 
    }
\end{abstract}

\setcounter{section}{-1}

\section{Introduction}

In a paper from 2014, \cite{Greenlees2016}, John Greenlees introduced a spectral sequence on (classical) commutative ring spectra as follows.

\begin{prop}[Lemma 3.1, \cite{Greenlees2016}] \label{prop:original.Greenlees}
    If $S\to R \to Q$ is a cofibre sequence of connective commutative algebras augmented over $k$ and $\pi_0(S)=k$, and $R$ is of upward finite type as an $S$-module, then there is a multiplicative spectral sequence:
    \[
        E^2_{s,t}=\pi_{s}(Q)\otimes_k \pi_{t}(S) \Rightarrow \pi_{s+t}(R)
    \] 
    with differentials:
    \[
        d^r: E^r_{s,t} \to E^r_{s-r,t+r-1}.
    \]
\end{prop}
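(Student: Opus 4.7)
The plan is to realise the spectral sequence as the one associated to a cellular tower for $R$ in the category of $S$-modules. Since $R$ is connective and of upward finite type over $S$, I would choose a minimal cellular filtration
\[
    S \;=\; F_{-1}R \;\to\; F_0R \;\to\; F_1R \;\to\; \cdots \;\to\; R \;=\; \colim_s F_sR,
\]
in which $F_sR / F_{s-1}R \simeq \bigvee_{I_s} \Sigma^s S$ with $|I_s| = \dim_k \pi_s(k\wedge_S R)$. Under the hypothesis that $S\to R\to Q$ is a cofibre sequence of augmented commutative $k$-algebras (so that $Q$ is computed as the pushout $k\wedge_S R$), this cell-count identifies with $|I_s| = \dim_k \pi_s(Q)$, which is finite in each degree by the finite type assumption.

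Applying $\pi_\ast$ to the cofibre sequences $F_{s-1}R \to F_s R \to F_sR/F_{s-1}R$ produces an exact couple whose $E^1$-page is
\[
    E^1_{s,t} \;=\; \pi_{s+t}\bigl(F_s R / F_{s-1} R\bigr) \;\cong\; k^{I_s} \otimes_k \pi_t(S),
\]
with $d^1$ the cellular boundary of $k\wedge_S R$ tensored with the identity on $\pi_\ast(S)$; the induced $d^r\colon E^r_{s,t} \to E^r_{s-r,\,t+r-1}$ has exactly the shape expected from a tower of this form. Since $\pi_\ast(S)$ is a graded vector space over the field $k$ and hence flat, taking $d^1$-homology yields
\[
    E^2_{s,t} \;\cong\; H_s\bigl(k\wedge_S R\bigr) \otimes_k \pi_t(S) \;\cong\; \pi_s(Q) \otimes_k \pi_t(S).
\]
Strong convergence to $\pi_{s+t}(R)$ follows from the upward finite type hypothesis: only finitely many filtration stages contribute in each total degree, so the $\lim^1$ and conditional-convergence issues are trivial.

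The most delicate point is multiplicativity. The filtration constructed above is \emph{a priori} only $S$-linear, whereas the statement asserts a multiplicative structure compatible with the algebra structure on $\pi_\ast(R)$. To get this, one must carry out the cell attachments inside the category of commutative $S$-algebras, using $\mathbb{E}_\infty$-cells in place of ordinary $S$-module cells, and check that the resulting multiplicative tower $\{F_sR\}$ still realises $R$ and has the same associated graded. The induced pairing on $E^2$ then reduces to the tensor product of the multiplications on $\pi_\ast(S)$ and on $\pi_\ast(Q)$. Constructing this multiplicative filtration, and confirming that the upgrade from $S$-module cells to $\mathbb{E}_\infty$-cells preserves the identification of $E^2$, is the main obstacle I expect in the argument.
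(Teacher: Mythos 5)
You should first be aware that the paper never proves Proposition \ref{prop:original.Greenlees}: it is imported from \cite{Greenlees2016} as motivation, and what the paper actually proves is the motivic analogue in Section \ref{sec:motivic.spe.seq}. Measured against that construction, your route is genuinely different. The paper does not filter the middle term by $S$-module cells; it filters the algebra playing the role of $S$ (there denoted $R$) by its truncations with respect to the $t$-structure of subsection \ref{subs:t.struct} on modules over the target, base-changes the tower along the algebra map, and takes the homotopy spectral sequence of the resulting tower; the abutment is then identified by a bar-complex/Tor argument (lemma \ref{lemma:limit.of.Qn}) and the $E^2$-page by flatness (lemma \ref{lemma:smash.becomes.tensor}). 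The payoff of that choice is exactly the point you single out as your main obstacle: truncations are compatible with the symmetric monoidal structure (proposition \ref{prop:prod.on.t.str}), so the whole filtration consists of $E_{\infty}$-algebras and multiplicativity is automatic, whereas a minimal cellular tower is a priori only $S$-linear.

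That multiplicative step is also where your proposal has a genuine gap, because the repair you sketch would fail as stated. If you attach cells in commutative ($E_{\infty}$) $S$-algebras rather than in $S$-modules, the filtration subquotients are no longer wedges $\bigvee_{I_s}\Sigma^{s}S$: the associated graded of a cellular commutative algebra is a free commutative algebra on the cells, so symmetric-power terms appear, the cells now correspond to algebra generators rather than to a $k$-basis of $\pi_{*}(Q)$, and the identification $E^1_{s,t}\cong k^{I_s}\otimes_k\pi_t(S)$ — hence $E^2_{s,t}\cong\pi_s(Q)\otimes_k\pi_t(S)$ — is destroyed. In other words, the upgraded tower cannot ``have the same associated graded''. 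The standard ways to obtain the ring structure are different: either keep the module-level cellular tower and produce a pairing of filtrations $F_pR\wedge_S F_qR\to F_{p+q}R$ refining the multiplication of $R$, or replace the cellular tower by a truncation (Postnikov-type) tower, which is lax symmetric monoidal — the mechanism this paper uses. The additive part of your argument (exact couple, identification of $E^2$ using minimality and flatness of $\pi_*(S)$ over $k$, and strong convergence from connectivity plus the upward finite type hypothesis) is sound.
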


The goal of the \hyperref[sec:motivic.spe.seq]{first section of this paper} is to produce an analogue of this spectral sequence in the context of the stable motivic homotopy category $\mathcal{SH}(S)$.  More precisely, we are interested in studying the truncation of those spectra that carry a $Q$-module structure, where $Q$ is a commutative algebra $Q \in CAlg(\mathcal{SH}(S))$. We begin by constructing a suitable $t$-structure; 
in particular, we make use of \cite[Proposition 1.4.4.11]{Lurie2017} to generate the non-negative part $Mod_{Q,\geq 0}$ from the small collection of objects $\{ \Sigma^{0,i} Q \}_{i \in \mathbb{Z}}$. This reflects our desire to control the truncations of a certain object by imposing conditions on its homotopy groups above or below a certain total degree. In the \hyperref[subs:t.struct]{first subsection}, we prove results in this sense (see, for instance, equations \ref{eqn:homotopy.of.fibre1} and \ref{eqn:homotopy.of.fibre2} or corollary \ref{cor:truncation.of.shifts}). Subsequently, we prove that this $t$-structure is compatible with the monoidal structure on $Mod_Q^{\otimes}$ (proposition \ref{prop:prod.on.t.str}).

In the \hyperref[subs:spectr.seq]{second subsection}, we introduce the motivic analogue of proposition \ref{prop:original.Greenlees} (proposition \ref{prop:ugly.spectral.sequence}), as a tri-graded upper half plane spectral sequence, starting from the $E^2$ page and with differentials of the form:
\[
    d^r:E^r_{s,t,*} \to E^r_{s-r,t+r-1,*}
\]
Observe that the third degree (which will be related to the weight of the spectra involved) is not altered by the differentials so that we might conceive this tri-graded spectral sequence as an infinite family of bi-graded ones. The remaining part of the subsection is dedicated to results that help to improve the appearance of the $E^2$ page and the convergence term.

In the \hyperref[sec:MHH.tau-1]{second section}, we focus on the $\tau$-inverted version of motivic Hochschild homology. Motivic Hochschild homology can be considered the immediate analogue of topological Hochschild homology in the stable motivic homotopy category. In fact, given $Q\in \mathcal{SH}(S)$ a ring spectrum, we define the motivic Hochschild homology $MHH(Q)$ of $Q$ as the derived tensor product:
\[
    Q \wedge_{Q \wedge Q^{op}}Q.
\]
In the event $Q$ is $E_{\infty}$, one can equivalently express this as a geometric realisation along the simplicial circle:
\[
MHH(Q) \cong S^{1}_s \otimes Q.
\]
The purpose of this second section is to compute the homotopy ring of:
\[
    MHH(M\mathbb{Z}/p)[\tau^{-1}]= M\mathbb{Z}/p[\tau^{-1}]\wedge_{M\mathbb{Z}/p\wedge M\mathbb{Z}/p[\tau^{-1}]} M\mathbb{Z}/p[\tau^{-1}]
\]
where $M\mathbb{Z}/p$ is the Suslin-Voevodsky mod-$p$ motivic cohomology ring spectra for $p$ any prime number and the basis $S=Spec(F)$ is the spectrum of an algebraically closed field of characteristic different from $p$; $\tau$ is a canonical class in $\pi_{0,-1}M\mathbb{Z}/p$. At first, we recall how one makes homotopy elements invertible at the level of spectra; we then verify that the object that one obtains satisfies all the conditions to get a properly-looking, first-quadrant spectral sequence (proposition \ref{prop:spectral.seq.tau.-1}).

\hyperref[subs:computation]{The last subsection} is devoted to the computation of the homotopy of $MHH(M\mathbb{Z}/p)[\tau^{-1}]$. After an auxiliary result, we prove that for any prime number $p$, one has:
\[
    \pi_{*,*} MHH(M\mathbb{Z}/p)[\tau^{\pm 1}] \cong \mathbb{F}_p[\mu_0]
\]
with $|\mu_0|=(2,0)$ (propositions \ref{prop:pi.MHH(MZ/2)[tau-1]} and \ref{prop:MHH(p).[tau-1]}).

\vspace{1cm}

I sincerely thank my advisors, Paul Arne {\O}stv{\ae}r and Markus Spitzweck, for challenging me with this problem and providing continuous guidance while realising this manuscript. I am also grateful to Bj{\o}rn Dundas for his invaluable help with the setup of the spectral sequence for $MHH(M\mathbb{Z}/p)[\tau^{-1}]$.

\clearpage

\tableofcontents
\vspace{1.5cm}

\section{A motivic spectral sequence}\label{sec:motivic.spe.seq}

In this section, we construct a $t$-structure on motivic spectra and prove related properties. We then associate a homotopy spectral sequence to such $t$-structure. Finally, we prove some results that refine the aspect of the $E^2$ page and of the convergence term of the spectral sequence

\subsection{The \texorpdfstring{$t$-}{t-}structure}\label{subs:t.struct}

In this section, we introduce the $t$-structure at the basis of the spectral sequence appearing in proposition \ref{prop:ugly.spectral.sequence}. In particular, we focus on results providing a nicer description of truncations and fibres from the point of view of homotopy groups.

We begin with a general characterisation of the subcategories of truncated objects in an $\infty$-category with a $t$-structure in terms of the mapping spaces.

\begin{lemma}\label{lemma:testongen}
Let $\{ C_i \}_{i \in \mathcal{I}} \subseteq \mathcal{C}$ be a small collection of objects in a presentable stable $\infty$-category $\mathcal{C}$. Consider the $t$-structure generated by them under colimits and extensions \cite[Proposition 1.4.4.11]{Lurie2017}. Then:
\begin{enumerate}
    \item $Y \in \mathcal{C}_{<0}$ if and only if $map_{\mathcal{C}}(X,Y)$ is contractible for all $X \in \mathcal{C}_{\geq 0}$.
    \item $Y \in \mathcal{C}_{<0}$ if and only if $map_{\mathcal{C}}(C_i,Y)$ is contractible for all ${i \in \mathcal{I}}$.
\end{enumerate}
\end{lemma}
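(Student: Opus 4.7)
The plan is to prove part (1) first, using the adjunction between the inclusion $\iota: \mathcal{C}_{\geq 0} \hookrightarrow \mathcal{C}$ and its right adjoint $\tau_{\geq 0}$, and then bootstrap to part (2) by a generation argument exploiting the description of $\mathcal{C}_{\geq 0}$ in \cite[Proposition 1.4.4.11]{Lurie2017} as the smallest full subcategory containing the $C_i$ and closed under small colimits and extensions.

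For the forward direction of (1) I would iterate the standard vanishing axiom of the $t$-structure under suspensions: given $X \in \mathcal{C}_{\geq 0}$ and $Y \in \mathcal{C}_{<0}$, each shift $\Sigma^n X$ still lies in $\mathcal{C}_{\geq 0}$, so $\pi_n \, map_{\mathcal{C}}(X,Y) = \pi_0 \, map_{\mathcal{C}}(\Sigma^n X, Y) = 0$ for every $n \geq 0$, which forces $map_{\mathcal{C}}(X,Y)$ to be contractible. For the converse, I would test the hypothesis on $X = \tau_{\geq 0} Y$, which is in $\mathcal{C}_{\geq 0}$ by construction. The adjunction yields $map_{\mathcal{C}}(\tau_{\geq 0} Y, Y) \simeq map_{\mathcal{C}_{\geq 0}}(\tau_{\geq 0} Y, \tau_{\geq 0} Y)$, and contractibility of this endomorphism space forces $\mathrm{id}_{\tau_{\geq 0} Y} = 0$, so $\tau_{\geq 0} Y \simeq 0$. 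The canonical fibre sequence $\tau_{\geq 0} Y \to Y \to \tau_{<0} Y$ then identifies $Y$ with $\tau_{<0} Y \in \mathcal{C}_{<0}$.

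For part (2), the forward direction is immediate from (1) since each $C_i$ lies in $\mathcal{C}_{\geq 0}$. For the converse, I would let $\mathcal{D} \subseteq \mathcal{C}$ be the full subcategory of those $X$ for which $map_{\mathcal{C}}(X, Y)$ is contractible, and verify that $\mathcal{D}$ enjoys the same closure properties as the ones used to generate $\mathcal{C}_{\geq 0}$. By hypothesis $\mathcal{D}$ contains all the $C_i$; it is closed under small colimits because $map_{\mathcal{C}}(-, Y)$ sends colimits to limits and contractibility of spaces is preserved under limits; and it is closed under extensions because any fibre sequence $X' \to X \to X''$ yields a fibre sequence of spaces $map_{\mathcal{C}}(X'', Y) \to map_{\mathcal{C}}(X, Y) \to map_{\mathcal{C}}(X', Y)$ whose total space is contractible whenever base and fibre are. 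By the minimality statement in Lurie's Proposition 1.4.4.11, we conclude $\mathcal{D} \supseteq \mathcal{C}_{\geq 0}$, and part (1) finishes the job.

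The main delicate point is essentially bookkeeping: using the correct direction of the adjunction in the converse of (1) (so that $\tau_{\geq 0} Y$ is "represented by itself" in the space $map_{\mathcal{C}}(\tau_{\geq 0} Y, Y)$), and invoking precisely the minimality clause of Lurie's generation result so that the closure argument in (2) is rigorous. Beyond these, there are no real computational obstacles.
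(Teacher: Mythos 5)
Your proposal is correct and follows essentially the same route as the paper: the forward direction of (1) by iterating suspensions inside $\mathcal{C}_{\geq 0}$, and the converse of (2) by checking that the full subcategory $\{X : map_{\mathcal{C}}(X,Y)\simeq \ast\}$ contains the generators and is closed under colimits and extensions, then invoking minimality from \cite[Proposition 1.4.4.11]{Lurie2017}. The only divergence is cosmetic: for the converse of (1) the paper treats the statement as immediate from the $\pi_0$-characterisation of $\mathcal{C}_{<0}$, while you re-derive it via the adjunction $\iota \dashv \tau_{\geq 0}$ and the fibre sequence $\tau_{\geq 0}Y \to Y \to \tau_{<0}Y$, which is a valid (slightly more self-contained) version of the same standard fact.
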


\begin{proof}
    Recall that, by the definition of a $t$-structure on a stable infinity category $\mathcal{C}$ \cite[Definition 1.2.1.4]{Lurie2017}, an object $Y \in \mathcal{C}_{<0}$ if $\pi_0(map_{\mathcal{C}}(X,Y))=Hom_{h\mathcal{C}}(X,Y)=0$ for all $X \in (h\mathcal{C})_{\geq 0}$.
    
\begin{enumerate}
    \item From left to right, first observe that the (simplicial) suspension of some $X \in \mathcal{C}$ is obtained as a pushout:
    \[
        \begin{tikzcd}
            X \arrow[r] \arrow[d] & \ast \arrow[d] \\
            \ast \arrow[r] & \Sigma^{1,0} X.
        \end{tikzcd}
    \]

    In particular, all positive simplicial suspensions $X[j]:=\Sigma^{j,0} X$ of any $X \in \mathcal{C}_{\geq 0}$ belong to $\mathcal{C}_{\geq 0}$. But then, if $Y \in \mathcal{C}_{<0}$, $\pi_j \, map_{\mathcal{C}}(X,Y) \simeq \pi_0 \, map_{\mathcal{C}}(X[j],Y)\cong 0$ \cite[Notation 1.1.2.17]{Lurie2017}. So $map_{\mathcal{C}}(X,Y)$ is contractible, as required. 

    The other implication is obvious.

    \item We shall use the characterisation from the previous point in this part of the proof. 
    
    The implication from left to right is immediate.

    From right to left, let $ Y \in \mathcal{C}$ be such that $map_{\mathcal{C}}(C_i,Y)\simeq \ast$ for all $i \in \mathcal{I}$.
    Consider the collection:
    \[
    \mathcal{C}'=\mathcal{C}'_Y=\{ X \in \mathcal{C} \text{ such that } map_{\mathcal{C}}(X,Y) \simeq \ast\}
    \]
    We want to show that $Y \in \mathcal{C}_{<0}$, in other words, that $\mathcal{C}_{\geq 0} \subseteq \mathcal{C}'$. By hypothesis, the generators $\{C_i\}$ belong to $\mathcal{C}'$. If we show that $\mathcal{C}'$ is closed under colimits and extensions, we are done. 
    
    Let us begin with colimits. This follows from the usual functorial interaction of colimits and mapping spaces. In fact, suppose that we have a diagram $J \to \mathcal{D}$ in some $\infty$-category $\mathcal{D}$ that admits a colimit. We want to show that, for all $B \in \mathcal{D}$:
    \[
        map_{\mathcal{D}} (\colim_J A_j, B) \cong \lim_J map_{\mathcal{D}} (A_j, B)
    \]
    To show this, first notice that a colimit diagram in the $\infty$-category $\mathcal{D}$ corresponds to a limit diagram in the opposite category $\mathcal{D}^{op}$; moreover, one has:
    \[
        map_{\mathcal{D}} (\colim_{J \to \mathcal{D}} A_j, B) \cong map_{\mathcal{D}^{op}} (B, \lim_{J^{op} \to \mathcal{D}^{op}} A_j)
    \]
    Now pass to presheaves:
   \[ \begin{tikzcd}
       J^{op} \arrow[d]&&\\
       \mathcal{D}^{op} \arrow[r, "\tilde{\mathcal{Y}}"]&
       \tilde{\mathscr{P}}(\mathcal{D})=Fun(\mathcal{D}, \mathcal{S}) \arrow[r, "ev_B"]&
       \mathcal{S} \\
       A_j \arrow[r, mapsto] & \tilde{\mathcal{Y}}(A_j) \arrow[r, mapsto] &
       \tilde{\mathcal{Y}}(A_j)(B).
    \end{tikzcd}\]
    Here we indicate by $\tilde{\mathcal{Y}}$ the Yoneda embedding of $\mathcal{D}^{op}$ \cite[Section 5.1.3]{Lurie2009} and \cite[Section 4.2]{Land2021}. First observe that $\tilde{\mathcal{Y}}(A_j)(B) \cong map_{\mathcal{D}^{op}}(B,A_j)\cong map_{\mathcal{D}}(A_j,B)$ by \cite[Corollary 4.2.8]{Land2021}. Now, the Yoneda embedding sends limits in $\mathcal{D}^{op}$ to limits in presheaves by \cite[ Proposition 5.1.3.2]{Lurie2009}. The evaluation at $B$ sends a limit diagram in $Fun(\mathcal{D}, \mathcal{S})$ to a limit diagram in $\mathcal{S}$ \cite[Proposition 6.2.10]{Cisinski2019}.
    So a $J$-indexed colimit in $\mathcal{D}$ is sent to a $J$-indexed limit in spaces, proving our assertion.
    
    Consider now a fibre sequence $X' \to X \to X''$ with $X',\,X'' \in \mathcal{C}'$. Dually, this induces a (co)fibre sequence:
    \[
        map_{\mathcal{C}}(X'',Y) \to map_{\mathcal{C}}(X,Y) \to map_{\mathcal{C}}(X',Y)
    \]

    Consider the associated long exact sequence on homotopy groups:
    \begin{multline*}
        \cdots \to \pi_{n+1}map_{\mathcal{C}}(X',Y)\to \pi_{n}map_{\mathcal{C}}(X'',Y) \to \pi_{n}map_{\mathcal{C}}(X,Y)\to \\
        \pi_{n}map_{\mathcal{C}}(X',Y)\to \pi_{n-1}map_{\mathcal{C}}(X'',Y) \to \cdots
    \end{multline*}
    Since by hypothesis all the homotopy groups of $map_{\mathcal{C}}(X',Y)$ and $map_{\mathcal{C}}(X'',Y)$ vanish, so must those of $map_{\mathcal{C}}(X,Y)$. We conclude that $map_{\mathcal{C}}(X,Y)$ is contractible, again by Whitehead for Kan complexes \cite[\href{https://kerodon.net/tag/00WV}{Tag 00WV}]{Lurie2018} (or equivalently the fact that the $\infty$-topos of spaces is hypercomplete).
    
\end{enumerate}
\end{proof}

We are ready to introduce the $t$-structure that gives rise to the spectral sequence studied in this paper.

\begin{defi}\label{defi:t-structure}
    Let $Mod_Q$ be the $\infty$-category of modules over a spectrum $Q \in CAlg(\mathcal{SH}(S))$; consider the collection of $\{ \Sigma^{0,i}Q\}_{i \in \mathbb{Z}}\subseteq Mod_Q$. As $Mod_Q$ is presentable, we can apply \cite[Proposition 1.4.4.11]{Lurie2017}: we define $Mod_{Q,\geq 0}$ to be the smallest full subcategory of $Mod_Q$ that contains the $\{ \Sigma^{0,i}Q\}_{i \in \mathbb{Z}}$ and is closed under colimits and extensions.
\end{defi}

We would like to characterise the truncations of $Mod_Q$ with respect to this $t$-structure in terms of the homotopy groups of their elements. As we will see, this requires making appropriate assumptions on the homotopy groups of $Q$. 

From \cite[Proposition 4.6.2.17]{Lurie2017} (here we make use of the identifications of \cite[Corollary 4.5.1.6]{Lurie2017}) the map $\mathbf{1} \to Q$  in $SH(S)$ induces an adjunction:
\begin{equation}\label{eq:adj.SH.ModQ}
    \begin{tikzcd}
        \mathcal{SH}(S) \arrow[rr, bend left=30, "-\wedge Q"] & \bot &
        Mod_{Q} \arrow[ll, bend left=30, "U"]
    \end{tikzcd}
\end{equation}
where $U$ is the forgetful functor.
Then, by \cite[Remark 5.1.4]{Land2021}, we have equivalences:
\begin{equation} \label{eqn:sh.modQ.adjunction}
    map_{Mod_{Q}}(\Sigma^{i,j}Q,Z) \simeq map_{\mathcal{SH}(S)}(S^{i,j},Z).
\end{equation}

\begin{rmk}\label{rmk:easyprop:htpygpscof}
    It is immediate from the definition of the $t$-structure that for all $Y \in Mod_{Q,\leq n}$, all $i \geq n+1$ and all $j$, by \ref{eqn:sh.modQ.adjunction}:
\[
    map_{\mathcal{SH}(S)}(S^{i,j},Y)\simeq map_{Mod_{Q}}(\Sigma^{i,j}Q,Y)\cong \star,
\]
without any further assumption on $Q$. In particular, the homotopy groups $\pi_{i,j}(Y)=\pi_0 map_{\mathcal{SH}(S)}(S^{i,j},Y)$ vanish for $i \geq n+1$ and all $j$.
\end{rmk}

\begin{prop} \label{prop:htpygpscof}
    Suppose that $\pi_{i,j}(Q)=0$ for $i<0$ and let $Y \in Mod_{Q,\geq n}$.
    Then we have $\pi_{i,j}(Y)=0$ for $i < n$ and all $j$.
\end{prop}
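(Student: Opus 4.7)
The plan is to prove the statement by testing the property ``$\pi_{i,j}(Y)=0$ for $i<n$ and all $j$'' on the generators of $Mod_{Q,\geq n}$ and verifying its stability under the operations that build this subcategory. Concretely, I would introduce the class
\[
\mathcal{C}' := \{Y \in Mod_Q \mid \pi_{i,j}(Y)=0 \text{ for all } i<n \text{ and all } j \in \mathbb{Z}\}.
\]
Since the axioms of a $t$-structure give $Mod_{Q,\geq n} = \Sigma^{n,0} Mod_{Q,\geq 0}$, this is the smallest subcategory of $Mod_Q$ closed under small colimits and extensions and containing the shifted generators $\{\Sigma^{n,k}Q\}_{k \in \mathbb{Z}}$; it therefore suffices to check that $\mathcal{C}'$ shares these three properties.

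Containment of the generators is a direct computation: $\pi_{i,j}(\Sigma^{n,k}Q) \cong \pi_{i-n,\, j-k}(Q)$, which vanishes for $i<n$ by the hypothesis on the homotopy groups of $Q$. Closure under extensions follows from the long exact sequence of homotopy groups associated to any fibre sequence, exactly as in the proof of Lemma \ref{lemma:testongen}(2): if the outer terms lie in $\mathcal{C}'$, so does the middle one.

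The main obstacle is closure under arbitrary small colimits, since $\pi_{i,j}$ does not in general preserve them. My plan is to decompose: in the presentable stable $\infty$-category $Mod_Q$, every small colimit can be written as a filtered colimit of finite colimits, and every finite colimit in turn breaks into biproducts and pushouts. Biproducts are preserved because they coincide with finite products in the stable setting and $\pi_{i,j}(-) = \pi_0\, map_{Mod_Q}(\Sigma^{i,j}Q,-)$ commutes with products. Pushouts reduce to the extension case already handled, via the standard fibre sequence $A \to B \oplus C \to B \sqcup_A C$ attached to a pushout square. Filtered colimits are preserved because $\Sigma^{i,j}Q$ is compact in $Mod_Q$: it is the image under the left adjoint $-\wedge Q$ in (\ref{eq:adj.SH.ModQ}) of the compact object $S^{i,j} \in \mathcal{SH}(S)$, and the right adjoint $U$ commutes with filtered colimits. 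Chaining these three reductions yields the colimit closure of $\mathcal{C}'$ and completes the verification that $Mod_{Q,\geq n} \subseteq \mathcal{C}'$.
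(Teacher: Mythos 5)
Your proposal is correct, but it takes a genuinely different route from the paper's. The paper reduces to $n=0$, shows by a closure argument that every object of $Mod_{Q,\geq 0}$ is cellular, and then invokes the splitting results of \cite[Proposition 7.3, Remark 7.4]{DugIsa2005} to write every such object as a (possibly infinite) wedge of non-negatively shifted sphere modules $\Sigma^{a,b}Q$, $a \geq 0$; the vanishing of negative homotopy then follows since homotopy groups commute with directed colimits \cite[Proposition 9.3]{DugIsa2005}. You instead test the vanishing property itself against the generation of $Mod_{Q,\geq n}$: containment of the shifted generators, closure under extensions via the long exact sequence, and closure under small colimits by decomposing them into finite coproducts, pushouts (reduced to the extension case through the fibre sequence $A \to B\oplus C \to B\sqcup_A C$), and filtered colimits, the last handled by compactness of $\Sigma^{i,j}Q$, which indeed follows from compactness of $S^{i,j}$ in $\mathcal{SH}(S)$ because the right adjoint $U$ of \ref{eq:adj.SH.ModQ} preserves filtered colimits. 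This closure style mirrors the paper's own proofs of Lemma \ref{lemma:testongen}(2) and Proposition \ref{prop:prod.on.t.str}, so it fits the toolkit already in place; what it buys is independence from the Dugger--Isaksen wedge decomposition, which implicitly requires $\pi_{*,*}Q$ to be of a rather special form (in the paper's application it is a graded field), whereas your argument uses only the stated hypothesis $\pi_{i,j}(Q)=0$ for $i<0$. What the paper's route buys in exchange is a stronger structural conclusion --- every object of $Mod_{Q,\geq 0}$ is a wedge of shifted copies of $Q$ --- which is reused later, for instance in \ref{eqn:taun.Y.if.union}. The only step you should make explicit in a final write-up is the reduction of arbitrary small colimits to pushouts, finite coproducts and filtered colimits of colimits over finite subdiagrams, which is standard (\cite[4.2.3.8, 4.4.2.6--4.4.2.7]{Lurie2009}).
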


\begin{proof} 
    We may assume $n=0$, as the general case is just a translation of the argument below.

    First, observe that $Mod_{Q,\geq 0}$ consists of cellular modules \cite[Definition 2.1]{DugIsa2005}; recall that in particular a module is called cellular if it belongs to the smallest subcategory of $Mod_Q$ containing $\{\Sigma^{a,b}Q\}_{a,\, b \in \mathbb{Z}}$ and closed under colimits. In fact, let $\mathcal{M} \subseteq Mod_{Q,\geq 0}$ be the full subcategory spanned by the cellular modules. Then:
    \begin{itemize}
        \item The generating modules $\{\Sigma^{a,b}Q\}_{a \geq 0,\, b \in \mathbb{Z}}\in \mathcal{M}$
        \item $\mathcal{M}$ is closed under colimits because both $Mod_{Q,\geq 0}$ and cellular modules are
        \item $\mathcal{M}$ is closed under extensions because given a cofibre sequence:
        \[
            X \to Y \to Z
        \]
        with $X$ and $Z$ in $\mathcal{M}$, then $Z \in Mod_{Q,\geq 0}$ because the non-negative part of a $t$-structure is closed under extensions, and $Y$ is cellular because of \cite[Lemma 2.5]{DugIsa2005}. So $Y \in \mathcal{M}$.
    \end{itemize}
    As $Mod_{Q,\geq 0}$ is the smallest full subcategory of $Mod_Q$ that contains $\{\Sigma^{a,b}Q\}_{a \geq 0,\, b \in \mathbb{Z}}$ and is closed under colimits and extensions, $\mathcal{M} = Mod_{Q,\geq 0}$.

    Cellularity implies very nice features; for example, homotopy groups detect equivalences \cite[Corollary 7.2]{DugIsa2005}, and every cellular module is a (possibly infinite) direct sum of sphere modules $\{\Sigma^{a,b}Q\}_{a,\, b \in \mathbb{Z}}$ \cite[Remark 7.4]{DugIsa2005}. The proof of \cite[Proposition 7.3]{DugIsa2005} allows us to be more precise: every cellular module is a direct sum of sphere modules $\{\Sigma^{a,b}Q\}_{a,\, b \in \mathbb{Z}}$, which correspond to the non-trivial elements in the homotopy ring. This implies that, in the context of this proposition, every object in $Mod_{Q,\geq 0}$ is actually a direct sum of modules in $\{\Sigma^{a,b}Q\}_{a \geq 0,\, b \in \mathbb{Z}}$.

    Now, all the elements in $\{\Sigma^{a,b}Q\}_{a \geq 0,\, b \in \mathbb{Z}}$ have trivial homotopy in negative degree:
    \[
    \pi_{i,j}(\Sigma^{a,b}Q)=[S^{i,j},\Sigma^{a,b}Q]\cong [S^{i-a,j-b},Q]=\pi_{i-a,j-b}(Q)\cong 0
    \] 
    for $i \leq 0$, by hypothesis. By \cite[Proposition 9.3]{DugIsa2005}, given a directed (in particular, discrete) system  $\alpha \to E_{\alpha}$, we have:
    \[
        \colim_{\alpha} \pi_{i,j}(E_{\alpha}) \cong \pi_{i,j}(\colim_{\alpha} E_{\alpha}),
    \]
    where the colimit of the $E_{\alpha}$ has to be intended in an $\infty$-categorical (homotopical) sense. But then any element of $Mod_{Q, \geq 0}$ has trivial homotopy in negative degrees.
    
\end{proof}

Let's put together the above results. Let $Y \in Mod_Q$ be a cellular module. Observe that we have a diagram of fibre sequences \cite[Remark 1.2.1.8]{Lurie2017}:
\[
\begin{tikzcd}
    \tau_{\geq n+1} Y \arrow[r] \arrow[d, "f"] &
    Y \arrow[r] \arrow[d]&
    \tau_{\leq n} Y \arrow[r] \arrow[d, "g"]&
    \tau_{\geq n+1} {Y\left[1 \right]} \arrow[r] \arrow[d, "{f\left [1\right ]}"] &
    {Y\left [1\right ]} \arrow[r] \arrow[d]&
    \ldots \\    
    \tau_{\geq n} Y \arrow[r] \arrow[d] &
    Y \arrow[r] \arrow[d]&
    \tau_{\leq n-1} Y \arrow[r] \arrow[d]&
    \tau_{\geq n} Y[1] \arrow[r] \arrow[d] &
    {Y \left [1\right ]} \arrow[r] \arrow[d]&
    \ldots \\
    cofib(f) \arrow[r] \arrow[d] &
    0 \arrow[r]  \arrow[d] &
    cofib(g) \arrow[r] \arrow[d]  &
    {cofib(f)\left [1\right ]} \arrow[r]  \arrow[d] &
    0 \arrow[r] \arrow[d] &
    \ldots\\
    \tau_{\geq n+1} Y[1] \arrow[r] &
    Y[1] \arrow[r] &
    \tau_{\leq n} Y[1] \arrow[r] &
    \tau_{\geq n+1} {Y\left[2 \right]} \arrow[r] &
    {Y\left [2\right ]} \arrow[r] &
    \ldots 
\end{tikzcd}
\]
For our spectral sequence, we are interested in characterising the (co)fibre of the map $g$. From the diagram, we can deduce:

\begin{enumerate}
    \item From the third row, $cofib(g) \cong cofib(f)[1]$, so $fib(g) \cong cofib(f)$.
    
    \item From the third column, we can extract the cofibre sequence:
    \[
        \tau_{\leq n-1} Y \to cofib(g) \to \tau_{\leq n} Y[1]
    \]
    which produces a long exact sequence of homotopy groups:

    \begin{multline*}
        \pi_{i,j}(\tau_{\leq n-1} Y) \to \pi_{i,j}(cofib(g)) \to \pi_{i-1,j}(\tau_{\leq n} Y) \to \\
        \pi_{i-1,j}(\tau_{\leq n-1} Y) \to \pi_{i-1,j}(cofib(g)) \to \pi_{i-2,j}(\tau_{\leq n} Y)\to \ldots
    \end{multline*}

    From Remark \ref{rmk:easyprop:htpygpscof}, for all $j \in \mathbb{Z}$:
    \begin{align*}
        &\pi_{i,j}(\tau_{\leq n-1} Y)\cong 0 \text{ for }i\geq n \\
        &\pi_{i-1,j}(\tau_{\leq n} Y) \cong 0 \text{ for } i-1 \geq n+1 \text{, or } i \geq n+2.
    \end{align*}
    
    So we have exact sequences, for all $j \in \mathbb{Z}$:
    \begin{align*}
        0 \to \pi_{i, j}(cofib(g)) &\to 0 \qquad \text{ for } i\geq n+2 \\
        0 \to \pi_{n+1,j}(cofib(g)) &\to \pi_{n,j}(\tau_{\leq n} Y) \to 0.
    \end{align*}

    From which we get the isomorphisms: $\pi_{i, j}(cofib(g)) \cong 0$ for  $i\geq n+2$ and $\pi_{n+1,j}(cofib(g)) \cong \pi_{n,j}(\tau_{\leq n} Y)$. Given the isomorphism in the previous point, this becomes:
    \begin{equation} \label{eqn:homotopyidentities0}
        \begin{aligned} 
            \pi_{i, j}(cofib(f)) \cong 0 \text{ for }  i\geq n+1\\
            \pi_{n,j}(cofib(f)) \cong \pi_{n,j}(\tau_{\leq n} Y)
        \end{aligned}
    \end{equation}

    \item From the horizontal cofibre sequence
    \[
    \tau_{\leq n} Y \to \tau_{\geq n+1} {Y\left[1 \right]} \to Y[1] \to \tau_{\leq n} Y[1] \to\ldots
    \]
    we get an exact sequence of homotopy groups:
    \[
    \pi_{i,j}(\tau_{\leq n} Y) \to \pi_{i-1,j}(\tau_{\geq n+1} {Y})\to \pi_{i-1,j}(Y) \to \pi_{i-1,j}(\tau_{\leq n} Y) \to \ldots 
    \]
    Applying the vanishing results of remark \ref{rmk:easyprop:htpygpscof} we get exact sequences for all $j\in \mathbb{Z}$:
    \begin{equation*}
        0\to \pi_{i-1,j}(\tau_{\geq n+1} {Y})\to \pi_{i-1,j}(Y) \to 0 \text{ for } i \geq n+2
    \end{equation*}
    So we have isomorphisms for all $j \in \mathbb Z$:
    \begin{equation}  \label{eqn:homotopyidentities1}
    \pi_{i,j}(\tau_{\geq n} {Y})\cong \pi_{i,j}(Y) \text{ for } i \geq n.
    \end{equation}
\end{enumerate}

If we further assume that $\pi_{i,j}(Q)=0$ for $i<0$ and all $j$, we can apply proposition \ref{prop:htpygpscof}, and get:

\begin{enumerate}
    \item From the first column, since $\tau_{\geq n} Y \in Mod_{Q, \geq n}$ and $\tau_{\geq n+1} Y[1] \in Mod_{Q, \geq n+1} \subseteq Mod_{Q, \geq n}$, $cofib(f) \in Mod_{Q, \geq n}$ from \cite[Proposition 1.2.1.16]{Lurie2017}; then, by  proposition \ref{prop:htpygpscof}: 
    \begin{equation} \label{eqn:homotopyidentities2}
        \pi_{i,j}(cofib(f))=0 \text{ for } i<n
    \end{equation} 
    and all $j$.
    \item From the horizontal cofibre sequence:
     \[
    \tau_{\geq n+1} Y \to Y \to \tau_{\leq n} Y \to \tau_{\geq n+1} {Y\left[1 \right]} \to\ldots
    \]
    we get an exact sequence of homotopy groups:
    \[
    \pi_{i,j}(\tau_{\geq n+1} Y ) \to \pi_{i,j}(Y) \to \pi_{i,j}(\tau_{\leq n} Y) \to \pi_{i-1,j}(\tau_{\geq n+1} {Y})\to \ldots 
    \]
    Applying this time the vanishing results of proposition \ref{prop:htpygpscof} we get exact sequences for all $j\in \mathbb{Z}$:
    \begin{equation*} 
        0 \to \pi_{i,j}(Y) \to \pi_{i,j}(\tau_{\leq n} Y) \to 0 \text{ for } i \leq n 
    \end{equation*}
    So we have isomorphisms for all $j \in \mathbb Z$:
    \begin{equation} \label{eqn:homotopyidentities3}
      \pi_{i,j}(Y) \cong \pi_{i,j}(\tau_{\leq n} Y)  \text{ for } i \leq n 
    \end{equation}
\end{enumerate}

Putting all together, $cofib(f) \cong fib(g)$ is a $Q$-module, with homotopy groups, for all $j$:
\begin{equation}\label{eqn:homotopy.of.fibre1}
\begin{aligned}
     \pi_{i, j}(cofib(f)) \cong 0 \text{ for }  i\geq n+1\\
     \pi_{n,j}(cofib(f)) \cong \pi_{n,j}(Y)
\end{aligned}
\end{equation}
from \ref{eqn:homotopyidentities0} and \ref{eqn:homotopyidentities1}. If one further assumes $\pi_{i,j}(Q)=0$ for $i<0$ and all $j$, one deduces also, for all $j$:
\begin{gather}\label{eqn:homotopy.of.fibre2}
     \pi_{i, j}(cofib(f)) \cong 0 \text{ for }  i \leq n-1
\end{gather}
from \ref{eqn:homotopyidentities2}.

The above results can in fact be related by something happening to the structure of the truncated objects at a deeper level.

\begin{lemma} \label{lemma:heart}
    The homotopy groups $\pi_{i,j}(Q)$ vanish for $i > 0$ and all $j$ if and only if the generating set $\{ \Sigma^{0,i}Q\}_{i \in \mathbb{Z}}$ lies in the hearth of the $t$-structure.
\end{lemma}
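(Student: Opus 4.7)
The plan is to split the biconditional according to the two halves of the heart. By construction of the $t$-structure, every $\Sigma^{0,i}Q$ already lies in $Mod_{Q,\geq 0}$, so the whole statement reduces to the equivalence between the vanishing $\pi_{i,j}(Q)=0$ for $i>0$ and all $j$ on one side, and the membership $\Sigma^{0,i}Q \in Mod_{Q,\leq 0}$ for every $i \in \mathbb{Z}$ on the other.

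The direction from heart to vanishing is immediate: assuming $Q=\Sigma^{0,0}Q \in Mod_{Q,\leq 0}$, remark \ref{rmk:easyprop:htpygpscof} forces $\pi_{i,j}(Q)=0$ for $i \geq 1$ and all $j$, and no further information from the other shifts $\Sigma^{0,i}Q$ is needed.

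For the converse, I would apply the second part of lemma \ref{lemma:testongen} to the shifted $t$-structure $(Mod_{Q,\geq 1},\, Mod_{Q,\leq 0})$. Since $Mod_{Q,\geq 1}=\Sigma^{1,0}\, Mod_{Q,\geq 0}$ and the simplicial suspension is an auto-equivalence preserving colimits and extensions, this subcategory is generated under colimits and extensions by $\{\Sigma^{1,i}Q\}_{i\in \mathbb Z}$. Lemma \ref{lemma:testongen}(2) then provides the criterion: an object $Y$ belongs to $Mod_{Q,\leq 0}$ if and only if $map_{Mod_Q}(\Sigma^{1,i}Q,Y) \simeq \ast$ for every $i\in\mathbb{Z}$. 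Taking $Y=\Sigma^{0,k}Q$ and using the adjunction equivalence \ref{eqn:sh.modQ.adjunction}, one computes
\[
    \pi_n\, map_{Mod_Q}(\Sigma^{1,i}Q,\Sigma^{0,k}Q) \cong \pi_n\, map_{\mathcal{SH}(S)}(S^{1,i},\Sigma^{0,k}Q) \cong \pi_{n+1,\,i-k}(Q),
\]
which vanishes for every $n\geq 0$ precisely under the hypothesis $\pi_{i,j}(Q)=0$ for $i>0$ and all $j$. Hence $\Sigma^{0,k}Q \in Mod_{Q,\leq 0}$ for every $k\in\mathbb{Z}$.

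The only real subtlety I anticipate is the identification of the generators of the shifted part $Mod_{Q,\geq 1}$, but this follows at once from the fact that $\Sigma^{1,0}$ is an equivalence compatible with colimits and extensions; once this observation is in place the argument is simply a bookkeeping exercise with lemma \ref{lemma:testongen} and the adjunction \ref{eq:adj.SH.ModQ}.
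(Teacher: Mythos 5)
Your proposal is correct and follows essentially the same route as the paper: reduce to showing $\{\Sigma^{0,i}Q\}\subseteq Mod_{Q,<1}$, test against the (shifted) generators via lemma \ref{lemma:testongen}(2), and translate the mapping spaces through \ref{eqn:sh.modQ.adjunction} into the homotopy groups $\pi_{n+1,\,i-k}(Q)$. The only difference is cosmetic: you make explicit the identification of the generators of $Mod_{Q,\geq 1}$ with $\{\Sigma^{1,i}Q\}$ and dispatch one direction via remark \ref{rmk:easyprop:htpygpscof}, steps the paper leaves implicit in its single if-and-only-if computation.
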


\begin{proof}
    We just have to verify that $\{ \Sigma^{0,i}Q\}_{i \in \mathbb{Z}} \subseteq Mod_{Q,<1}$, as the inclusion $\{ \Sigma^{0,i}Q\}_{i \in \mathbb{Z}} \subseteq Mod_{Q,\geq 0}$ holds by definition.
    
    Given any $Z \in Mod_Q$, by the above lemma \ref{lemma:testongen}, we can reduce to test if $Z \in Mod_{Q,<1}$ by checking the contractibility of the mapping spaces from generators of $Mod_{Q,\geq 0}$ to $Z$. For our $t$-structure, this means:
    \[
        map_{Mod_Q}(\Sigma^{1,j}Q, Z) \simeq \ast.
    \]
    If we now choose $Z= \Sigma^{0,i}Q$ in the generating set, we have, by \ref{eqn:sh.modQ.adjunction}:
     \[
        map_{Mod_Q}(\Sigma^{1,j}Q, \Sigma^{0,i}Q) \simeq map_{\mathcal{SH}(S)}(S^{1,j},\Sigma^{0,i}Q) \simeq map_{\mathcal{SH}(S)}(S^{1,j-i},Q)
     \]
     Let's consider the homotopy groups of this mapping space. By \cite[Notation 1.1.2.17]{Lurie2017}, we have an identification for all non-negative $n$:
     \[
        \pi_n map_{\mathcal{SH}(S)}(S^{1,j-i},Q) \cong [S^{1+n,j-i},Q]=\pi_{1+n,j-i}(Q).
     \]
     Hence, the assumption on the homotopy groups of $Q$ is equivalent to the contractibility of the various mapping spaces.
\end{proof}

This proof generalises to:
\begin{cor}\label{cor:truncation.of.shifts}
    If the homotopy groups $\pi_{i,j}(Q)$ vanish for $i > 0$ and all $j$, then:
    \[
    \tau_{\leq n}(\Sigma^{p,q} Q) \cong 
    \begin{cases}
        \Sigma^{p,q} Q \text{ if } n \geq p \\
        * \text{ if } n < p.
    \end{cases}
    \]
\end{cor}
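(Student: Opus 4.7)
The proof follows almost formally from Lemma \ref{lemma:heart} together with the standard shift behaviour of a $t$-structure under suspension. The plan is to first rewrite $\Sigma^{p,q}Q$ as $(\Sigma^{0,q}Q)[p]$, then use the lemma to locate $\Sigma^{0,q}Q$ in the heart, and finally translate by $p$ to obtain a very tight membership in $Mod_{Q,\geq p}\cap Mod_{Q,\leq p}$.

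More concretely, first I would invoke Lemma \ref{lemma:heart} to conclude that under the hypothesis on $\pi_{*,*}(Q)$, each $\Sigma^{0,q}Q$ lies in $Mod_{Q,\geq 0}\cap Mod_{Q,<1}$. By the axioms of a $t$-structure \cite[Definition 1.2.1.4]{Lurie2017}, suspension by $p$ maps $Mod_{Q,\geq 0}$ to $Mod_{Q,\geq p}$ and $Mod_{Q,<1}$ to $Mod_{Q,<p+1}=Mod_{Q,\leq p}$, so
\[
\Sigma^{p,q}Q \;=\; (\Sigma^{0,q}Q)[p] \;\in\; Mod_{Q,\geq p}\cap Mod_{Q,\leq p}.
\]

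From this, the two cases separate cleanly. If $n\geq p$, then $\Sigma^{p,q}Q\in Mod_{Q,\leq p}\subseteq Mod_{Q,\leq n}$, and since the truncation $\tau_{\leq n}$ is the identity on $Mod_{Q,\leq n}$, one gets $\tau_{\leq n}(\Sigma^{p,q}Q)\cong \Sigma^{p,q}Q$. If instead $n<p$, then $n+1\leq p$ and so $\Sigma^{p,q}Q\in Mod_{Q,\geq p}\subseteq Mod_{Q,\geq n+1}$; applying $\tau_{\geq n+1}$ to the fibre sequence $\tau_{\geq n+1}(\Sigma^{p,q}Q)\to \Sigma^{p,q}Q \to \tau_{\leq n}(\Sigma^{p,q}Q)$ (or equivalently using that $\tau_{\leq n}$ vanishes on $Mod_{Q,\geq n+1}$) yields $\tau_{\leq n}(\Sigma^{p,q}Q)\simeq \ast$.

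I do not expect any serious obstacle here: the whole argument is formal once Lemma \ref{lemma:heart} is available, the only point requiring a moment's thought being that the suspension in question is the simplicial (topological) one $\Sigma^{p,0}=[p]$, which is the one that interacts with the $t$-structure filtration, rather than the Tate twist $\Sigma^{0,q}$, which preserves the heart precisely by the assumption on $Q$.
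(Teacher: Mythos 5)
Your argument is correct and is essentially the paper's own: the corollary is stated there as a direct generalisation of the proof of Lemma \ref{lemma:heart}, and your formal repackaging (heart membership of $\Sigma^{0,q}Q$ from the lemma, then the shift compatibility $Mod_{Q,\geq 0}[p]\subseteq Mod_{Q,\geq p}$, $Mod_{Q,\leq 0}[p]\subseteq Mod_{Q,\leq p}$, then $\tau_{\leq n}$ being the identity on $Mod_{Q,\leq n}$ and zero on $Mod_{Q,\geq n+1}$) carries exactly the same content as redoing the mapping-space computation with $\Sigma^{p,q}Q$ in place of $\Sigma^{0,i}Q$. Your closing remark correctly identifies the only delicate point, namely that the $t$-structure interacts with the simplicial suspension $[p]=\Sigma^{p,0}$ while the Tate twist is absorbed by the hypothesis on $\pi_{*,*}(Q)$.
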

Suppose that $Q$ lies in the heart of our $t$-structure; observe that we have equivalences: 
    \[
     Q \cong \tau_{\leq 0} Q \cong fib( \tau_{\leq 0} Q \to \tau_{\leq -1} Q).
    \]

Suppose in this setting that $Y$ is cellular, in other words, a coproduct of shifted copies of $Q$:
\[
    Y= \bigvee_{\alpha} \Sigma^{p_{\alpha},q_{\alpha}}Q.
\] 

Now, recall that the truncation functors $\tau_{\leq n}: Mod_Q \to Mod_{Q,\leq n}$ are left adjoint to the inclusion $Mod_{Q,\leq n} \subseteq Mod_Q$ \cite[Proposition 1.2.1.5.]{Lurie2017};
then, by the adjoint functor theorem \cite[Corollary 5.5.2.9]{Lurie2009}, they preserve small colimits.

So:
\[
\tau_{\leq n} Y =\tau_{\leq n} \left (\bigvee_{\alpha}\Sigma^{p_{\alpha},q_{\alpha}}Q \right)=
\bigvee_{\alpha}\tau_{\leq n}\left ( \Sigma^{p_{\alpha},q_{\alpha}}Q \right ).
\]
From \ref{cor:truncation.of.shifts}, we then have:
\begin{equation} \label{eqn:taun.Y.if.union}
    \tau_{\leq n} Y=\bigvee_{\alpha \text{ s.t. } p_{\alpha}\leq n}\Sigma^{p_{\alpha},q_{\alpha}}Q. 
\end{equation}

In particular, it follows that:
    \begin{equation}
        fib(\tau_{\leq n} Y \to \tau_{\leq n-1} Y) = \bigvee_{\alpha \text{ s.t. } p_{\alpha} = n} \Sigma^{n,q_{\alpha}}Q
    \end{equation}

\begin{rmk}
    Here we are actually adopting the common practice of calling $\tau_{\leq n}$ the composite $inc_n \circ \tau_{\leq n}: Mod_Q \to Mod_Q$, $inc_n$ being the inclusion $inc_n: \tau_{\leq n}Mod_Q \to Mod_Q$, so that it is possible to compare the different truncations $\tau_{\leq n} Y$ and $\tau_{\leq n-1} Y$. 
\end{rmk}

This allows, in particular, to recover \ref{eqn:homotopy.of.fibre1}, and once we assume that also $\pi_{i,*}Q=0$ for $i < 0$, we get \ref{eqn:homotopy.of.fibre2} as well. Nonetheless, it is important to note that this result would be useful even if $\pi_{*,*}Q$ were nonzero in some negative degrees: while we wouldn't achieve the same identifications between the homotopy of the fibres and that of $Y$, the fibres would still exhibit fairly reasonable and computable homotopy groups.

To conclude this part, we observe that our filtration is compatible with the monoidal structure on modules. In fact, \cite[Theorem 4.5.2.1]{Lurie2017} and \cite[Theorem 4.5.3.1]{Lurie2017} allow us to promote the free/forgetful adjunction \ref{eq:adj.SH.ModQ} to a symmetric monoidal one:

\begin{equation}\label{eq:smon.adj.SH.ModQ}
    \begin{tikzcd}
        \mathcal{SH}(S)^{\wedge} \arrow[rr, bend left=30, "-\wedge Q"] & \bot &
        Mod_{Q}^{\otimes_Q} \arrow[ll, bend left=30, "U"]
    \end{tikzcd}
\end{equation}
where on the left-hand side we put the usual coproduct monoidal structure and on the right-hand side we have the product induced from the bimodules structures \cite[Proposition 4.4.3.12.]{Lurie2017}.

Moreover, as $\mathcal{SH}(S)^{\wedge}$ is a presentable symmetric monoidal category, so is $Mod_{Q}^{\otimes_Q}$, by \cite[Theorem 3.4.4.2]{Lurie2017}. In particular, the product $- \otimes_Q -$ preserves colimits independently in each variable.

\begin{prop}\label{prop:prod.on.t.str}
    The $t$-structure is compatible with the product on $Mod_Q^{\otimes_Q}$, in the sense that if $X \in Mod_{Q,\geq n}$ and $Y \in Mod_{Q,\geq m}$, then $X \otimes_Q Y \in Mod_{Q,\geq n+m}$.
\end{prop}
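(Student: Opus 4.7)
The plan is a standard reduction-to-generators argument.

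First I would reduce to the case $n=m=0$: by definition $Mod_{Q,\geq k}=\Sigma^{k,0}Mod_{Q,\geq 0}$, and the simplicial shift is the intrinsic shift of the stable $\infty$-category $Mod_Q$, so it commutes with the exact functor $-\otimes_Q Y$. Writing $X=\Sigma^{n,0}X'$ and $Y=\Sigma^{m,0}Y'$ with $X',Y'\in Mod_{Q,\geq 0}$ therefore gives
\[
X\otimes_Q Y\;\simeq\;\Sigma^{n+m,0}(X'\otimes_Q Y'),
\]
and it suffices to prove $X'\otimes_Q Y'\in Mod_{Q,\geq 0}$.

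Next I would fix $Y\in Mod_{Q,\geq 0}$ and consider the full subcategory
\[
\mathcal{T}_Y\;:=\;\{\,X\in Mod_Q\;:\;X\otimes_Q Y\in Mod_{Q,\geq 0}\,\}.
\]
Because $-\otimes_Q Y$ preserves colimits (as observed just before the proposition) and, being a left adjoint between stable $\infty$-categories, also preserves cofibre sequences, and because $Mod_{Q,\geq 0}$ is itself closed under colimits and extensions by construction, $\mathcal{T}_Y$ inherits both closure properties. By the defining universal property of $Mod_{Q,\geq 0}$ as the smallest subcategory containing $\{\Sigma^{0,i}Q\}_{i\in\mathbb{Z}}$ with these closure properties, to deduce $Mod_{Q,\geq 0}\subseteq \mathcal{T}_Y$ it is then enough to verify that every generator $\Sigma^{0,i}Q$ belongs to $\mathcal{T}_Y$.

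For the final step I would use that $Q$ is the unit of $\otimes_Q$ to identify $\Sigma^{0,i}Q\otimes_Q Y\simeq \Sigma^{0,i}Y$, so it remains to show $\Sigma^{0,i}Y\in Mod_{Q,\geq 0}$. This holds because the generating set $\{\Sigma^{0,j}Q\}_{j\in\mathbb{Z}}$ is manifestly stable under the autoequivalence $\Sigma^{0,i}$, and $\Sigma^{0,i}$ commutes with colimits and cofibre sequences; hence it preserves the subcategory these generators generate. I do not anticipate any real obstacle here: the argument is entirely formal once the closure properties of $\mathcal{T}_Y$ are in place, and the key structural input is precisely that the generating family was chosen closed under the weight shift, which is exactly what the generator check requires.
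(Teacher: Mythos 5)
Your proposal is correct, and its overall skeleton is the same as the paper's: reduce to $n=m=0$, then fix $Y\in Mod_{Q,\geq 0}$ and run a generators-and-closure argument on the full subcategory $\mathcal{T}_Y=\{X : X\otimes_Q Y\in Mod_{Q,\geq 0}\}$, using that $-\otimes_Q Y$ preserves colimits and cofibre sequences while $Mod_{Q,\geq 0}$ is closed under colimits and extensions. The one place you genuinely diverge is the generator check. The paper handles $\Sigma^{0,i}Q\otimes_Q Y$ by a second, inner generators-and-closure induction in the variable $Y$ (a subcategory $\mathcal{D}$), whose base case is the explicit computation $\Sigma^{0,i}Q\otimes_Q\Sigma^{0,j}Q\simeq\Sigma^{0,i+j}Q$ via monoidality of $-\wedge Q$. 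You instead use that $Q$ is the unit, identifying $\Sigma^{0,i}Q\otimes_Q Y\simeq\Sigma^{0,i}Y$, and then observe that $Mod_{Q,\geq 0}$ is stable under the weight-shift autoequivalence $\Sigma^{0,i}$ because the generating family $\{\Sigma^{0,j}Q\}_{j\in\mathbb{Z}}$ is stable under it and $\Sigma^{0,i}$ preserves colimits and extensions. This is a cleaner packaging of the same content: your shift-stability check is itself a miniature generators-and-closure argument, so nothing essentially new is needed, but you avoid the double induction over both tensor variables and make explicit why the generating set was taken closed under weight shifts. Your direct justification of the reduction to $n=m=0$ (via $Mod_{Q,\geq k}=\Sigma^{k,0}Mod_{Q,\geq 0}$ and exactness of $-\otimes_Q Y$) also replaces the paper's citation of Lurie's Remark 2.2.1.4, and is fine. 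No gaps.
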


\begin{proof}
    As observed in \cite[Remark 2.2.1.4]{Lurie2017}, it is enough to prove the thesis for $n=m=0$. Moreover, as the product is symmetric, we can reduce to showing it for the first variable only.
    
    Let then $Y \in Mod_{Q,\geq 0}$ and define $\mathcal{C}$ to be the full subcategory of $Mod_Q$ spanned by those $X$ such that $X \otimes_Q Y \in Mod_{Q,\geq 0}$. We wish to prove that $Mod_{Q,\geq 0} \subseteq \mathcal{C}$. By definition of $Mod_{Q,\geq 0}$, it is enough to show that $\mathcal{C}$ contains the generators $\{\Sigma^{0,i}Q\}_{i \in \mathbb{Z}}$ and is closed under small colimits and extensions.

    Let then $i$ be any integer; we want to show that for any $Y \in Mod_{Q,\geq 0}$, $\Sigma^{0,i}Q\otimes_Q Y \in Mod_{Q,\geq 0}$. We use a similar argument: let $\mathcal{D}$ be the full subcategory of $Mod_Q$ spanned by those $Y$ such that $\Sigma^{0,i}Q\otimes_Q Y \in Mod_{Q,\geq 0}$; we wish to prove that $Mod_{Q,\geq 0} \subseteq \mathcal{D}$. 

    First of all, $\{\Sigma^{0,j}Q\}_{j \in \mathbb{Z}}\subseteq \mathcal{D}$. In fact, due to monoidality of the functor $-\wedge Q:  \mathcal{SH}(S)^{\wedge} \to Mod_{Q}^{\otimes_Q}$:
    \[
    \begin{aligned}
        \Sigma^{0,i}Q \otimes_Q \Sigma^{0,j}Q &= (S^{0,i} \wedge Q) \otimes_Q (S^{0,j} \wedge Q) \cong (S^{0,i} \wedge S^{0,j}) \wedge Q \\ 
        &\cong S^{0,i+j} \wedge Q = \Sigma^{0,i+j}Q \in Mod_{Q,\geq 0}
    \end{aligned}
    \]
    Then, $\mathcal{D}$ is closed under colimits, as the tensor product is compatible with them; suppose in fact $Y_{\alpha} \in \mathcal{D}$ for all $\alpha \in A$ some indexing diagram:
    \[
        \Sigma^{0,i}Q \otimes_Q (\colim_{\alpha \in A} Y_{\alpha}) \cong \colim_{\alpha \in A} (\Sigma^{0,i}Q \otimes_Q   Y_{\alpha})
    \]
    As $Y_{\alpha} \in \mathcal{D}$, $\Sigma^{0,i}Q \otimes_Q   Y_{\alpha} \in Mod_{Q,\geq 0}$. But $Mod_{Q,\geq 0}$ is closed under colimits, hence the claim.
    
    Finally, extensions: we must show that for every fibre sequence $Y' \to Y \to Y''$, with $Y' \in \mathcal{D}$ and $Y'' \in \mathcal{D}$, also $Y \in \mathcal{D}$. Now a fibre sequence in a stable $\infty$ category is the same as a cofibre sequence, hence it is defined by a colimit (pushout):
    \[
        \begin{tikzcd}
            Y' 
                \arrow{r}
                \arrow{d}
            & Y
                \arrow{d}
            \\ 0
                \arrow{r}
            & Y''
        \end{tikzcd}
    \]
    Since $\Sigma^{0,i}Q \otimes_Q -$ preserves colimits, also the induced diagram:
    \[
        \begin{tikzcd}
            \Sigma^{0,i}Q \otimes_Q Y' 
                \arrow{r}
                \arrow{d}
            & \Sigma^{0,i}Q \otimes_Q Y
                \arrow{d}
            \\ \Sigma^{0,i}Q \otimes_Q 0
                \arrow{r}
            & \Sigma^{0,i}Q \otimes_Q Y''
        \end{tikzcd}
    \]
    gives rise to a cofibre sequence
    \[
    \Sigma^{0,i}Q \otimes_Q Y' \to \Sigma^{0,i}Q \otimes_Q Y \to \Sigma^{0,i}Q \otimes_Q Y''
    \]
    As $Y' \in \mathcal{D}$ and $Y'' \in \mathcal{D}$, $\Sigma^{0,i}Q \otimes_Q Y' \in Mod_{Q,\geq 0}$ and $\Sigma^{0,i}Q \otimes_Q Y'' \in Mod_{Q,\geq 0}$. But by construction $Mod_{Q,\geq 0}$ is closed under extensions, so $\Sigma^{0,i}Q \otimes_Q Y \in Mod_{Q,\geq 0}$.

    Thus $Mod_{Q,\geq 0} \subseteq \mathcal{D}$, and hence $\mathcal{C}$ contains the generators $\{\Sigma^{0,i}Q\}_{i \in \mathbb{Z}}$. One then proves that it is closed under colimits and extensions by the same argument we used for $\mathcal{D}$, by fixing a generic element $Y$ of $Mod_{Q,\geq 0}$ and considering the image under $ - \otimes_{Q}Y$ of colimit and extension diagrams.
\end{proof}

This in particular implies that connective part $Mod_{Q,\geq 0}$ inherits a symmetric monoidal structure such that the truncation functors:
\[
    \tau_{\leq n}: Mod_{Q,\geq 0} \to Mod_{Q,\geq 0}
\]
are symmetric monoidal \cite[Example 2.2.1.10]{Lurie2017}.
Hence they preserve algebra objects, so, if $R \in Alg_{E_{\infty}}(Mod_{Q, \geq 0})$ is a connective commutative algebra, then the whole filtration:
\[
    R \to \ldots \to R_{\leq n} \to R_{\leq n-1} \to \ldots \to R_{\leq 1} \to R_{\leq 0}
\]
is made of $E_{\infty}$ motivic ring spectra over $Q$ (recall the identification $Alg_{E_{\infty}}(Mod_Q) \cong Alg_{E_{\infty}}(SH(S))_{/Q}$ from \cite[section 3.4.1]{Lurie2017}).

\clearpage

\clearpage
\subsection{The spectral sequence}\label{subs:spectr.seq}

We start with some basic assumptions. All objects are assumed to be cellular.
\begin{hypothesis} \label{hyp:map.of.spectra}
    Let $Q$ be a commutative motivic ring spectrum in $\mathcal{SH}(S)$. Suppose we are given a map of commutative motivic ring spectra in $R \to Q$ in $Alg_{E_{\infty}}(\mathcal{SH}(S))_{Q/} \cong Alg_{E_{\infty}}(Mod_Q)$ and that $R\cong \tau_{\geq 0} R$ is connective with respect to the $t$-structure defined in the previous part.
\end{hypothesis} 

Consider the Postnikov tower for $R$ with respect to the $t$-structure on $Q$-modules introduced before:

\[
    R \to \cdots \to R_{\leq n} \to R_{\leq n-1} \to \cdots \to R_{\leq 1} \to R_{\leq 0}
\]
As $R \cong \tau_{\geq 0}R$ is in the non-negative part of the $t$-structure, for negative indices $n$ we have $ R_{\leq n} \cong *$ is the point. As remarked after the proof of \ref{prop:prod.on.t.str}, this filtration lives in $E_{\infty}$ motivic ring spectra. 

Consider the derived push-outs: $Q_n:= Q\wedge_{R} R_{\leq n}$; we obtain a multiplicative filtration:
\[ 
    \lim_{\overleftarrow{n}} Q_{ n} \to \cdots Q_{n} \to Q_{n -1} \to \cdots \to Q_0 \to *
\]

Denote by 
\[
F_n = \Sigma^{-n,0}fib(R_{\leq n} \to R_{\leq n-1});
\]
observe that:
\[
\begin{aligned}
    fib(Q_n \to Q_{n-1}) &\cong fib(Q\wedge_{R} R_{\leq n} \to Q\wedge_{R} R_{\leq n-1}) \\
    &\cong Q\wedge_{R}fib(R_{\leq n} \to R_{\leq n-1}) \cong Q \wedge_R F_n[n].
\end{aligned}
\]

We study the homotopy spectral sequence arising from the  $Q_i$ \cite[Proposition 1.2.2.7]{Lurie2017}; the exact couple (see as a reference on the terminology \cite[Section 2.2]{McCleary2000}) is given by:
\begin{equation}\label{eqn:exact.couple}
    D^2_{s,(t,*)}=\pi_{s+t,*}(Q_{t}) \qquad 
    E^2_{s,(t,*)}=\pi_{s+t,*}(Q \wedge_{R} F_t[t])
\end{equation}
with $d^2$ differentials induced by:
\[
    Q \wedge_{R} F_t[t] \to Q_{t} \to Q \wedge_{R} F_{t+1}[t+2].
\]
We depict the situation in the commutative diagram \ref{fig:spectra.for.sequence}.

\begin{figure}
    \centering
    \begin{tikzcd}
     & \vdots \arrow[d] & \vdots \arrow[d] & \\
     Q \wedge_{R} F_{t+1}[t+1] \arrow[r, purple] 
     & Q_{ t +1} \arrow[d] \arrow[r] \arrow[ur, "id", purple] 
     & Q_{ t} \arrow[d] \arrow[r, blue] 
     & Q \wedge_{R} F_{t+1}[t+2] \\
     Q \wedge_{R} F_{t}[t] \arrow[r, blue] 
     & Q_{t} \arrow[d] \arrow[r] \arrow[ur, "id", blue] 
     & Q_{t-1} \arrow[d] \arrow[r, violet] 
     & Q \wedge_{R} F_{t}[t+1] \\
     Q \wedge_{R} F_{t-1}[t-1] \arrow[r,violet] 
     & Q_{ t-1} \arrow[d] \arrow[r] \arrow[ur, "id", violet] 
     & Q_{t-2} \arrow[d] \arrow[r, orange] 
     & Q \wedge_{R} F_{t-1}[t] \\     
     & \vdots \arrow[ur, "id", orange]& \vdots  & \\
    \end{tikzcd}
    \caption{The commutative diagram representing how the spectral sequence arises. Just follow the coloured paths to obtain the $d^2$ differentials.}
    \label{fig:spectra.for.sequence}
\end{figure}

Observe in particular that $Q_n \cong *$  is contractible for $n < 0$, due to $R$ being connective.

We can then apply \cite[Proposition 1.2.2.14]{Lurie2017} to get:
\begin{prop} \label{prop:ugly.spectral.sequence}
In the context of \ref{hyp:map.of.spectra}, there is a strongly convergent multiplicative spectral sequence:
\begin{equation} \label{eqn:ugly.spectral.sequence}
    E^2_{s,t,*}= \pi_{s+t,*}(Q \wedge_{R} F_t[t]) \Rightarrow \pi_{s+t,*}(\lim_{\overleftarrow{n}} Q_n)
\end{equation}

with differentials of the form
\[
    d^r: E^r_{s,t,*} \to E^r_{s-r,t+r-1,*}.
\]
\end{prop}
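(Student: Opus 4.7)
The plan is to combine Lurie's general tower spectral sequence machinery with the multiplicative structure established at the end of subsection \ref{subs:t.struct}.

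First, I would observe that the exact couple displayed in \ref{eqn:exact.couple}, together with the identification $fib(Q_n \to Q_{n-1}) \cong Q \wedge_R F_n[n]$ obtained above, is exactly the data required by \cite[Proposition 1.2.2.7]{Lurie2017} to produce a spectral sequence with $E^2$-page $\pi_{s+t,*}(Q \wedge_R F_t[t])$ and differentials of the claimed shape $d^r: E^r_{s,t,*} \to E^r_{s-r,t+r-1,*}$. The weight index is simply carried along, since every structure map is a map of motivic spectra and therefore preserves weight; this is what allows us to regard the tri-graded object as an infinite family of bi-graded spectral sequences parametrised by weight.

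Next I would establish strong convergence via \cite[Proposition 1.2.2.14]{Lurie2017}. Its hypotheses reduce to a boundedness condition on the tower $\{Q_n\}$, which holds on both sides. On one side, $R \cong \tau_{\geq 0} R$ is connective, so $R_{\leq n} \cong *$ and hence $Q_n \cong *$ for $n < 0$. On the other, the fibre $F_t[t]$ is the $t$-th Postnikov layer of $R$, hence lies in $Mod_{Q, \geq t}$, and smashing with $Q$ over $R$ preserves this connectivity: one writes $Q \wedge_R F_t[t]$ as a geometric realisation of the two-sided bar construction $B_{\bullet}(Q, R, F_t[t])$, whose $n$-simplices are $R^{\otimes_Q n} \otimes_Q F_t[t]$, each belonging to $Mod_{Q, \geq t}$ by iterated application of proposition \ref{prop:prod.on.t.str} (using that $R \in Mod_{Q, \geq 0}$ and that $Mod_{Q, \geq t}$ is closed under colimits). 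Consequently only finitely many entries $E^r_{s,t,*}$ with fixed $s+t$ can be non-zero at a given bidegree, which yields strong convergence to $\pi_{*,*}(\lim_{\overleftarrow{n}} Q_n)$.

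For multiplicativity, I would invoke the observation recorded at the end of subsection \ref{subs:t.struct}: the truncation functors $\tau_{\leq n}$ are symmetric monoidal on $Mod_{Q, \geq 0}$, so the Postnikov tower $\{R_{\leq n}\}$ is a tower in $Alg_{E_{\infty}}(Mod_Q)$. Base-changing along the given map $R \to Q$ preserves $E_{\infty}$-algebra structures, so $\{Q_n\}$ is a filtered $E_{\infty}$-algebra in $Mod_Q$, and the standard construction then endows the associated spectral sequence with pairings $E^r_{s,t,*} \otimes E^r_{s',t',*} \to E^r_{s+s',t+t',*}$ compatible with the differentials and converging to the multiplication on $\pi_{*,*}(\lim_{\overleftarrow{n}} Q_n)$. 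The main obstacle is the connectivity bookkeeping behind strong convergence: one must verify that the relative tensor product $Q \wedge_R -$ really does preserve the non-negative part of the $t$-structure, and this ultimately rests on proposition \ref{prop:prod.on.t.str} applied levelwise in the bar resolution, rather than being purely formal.
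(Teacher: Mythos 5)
Your proposal is correct and follows essentially the same route as the paper: the spectral sequence is obtained by feeding the tower $Q_n = Q \wedge_R R_{\leq n}$ with fibres $Q \wedge_R F_t[t]$ into \cite[Propositions 1.2.2.7 and 1.2.2.14]{Lurie2017}, with boundedness coming from $Q_n \simeq \ast$ for $n<0$ (connectivity of $R$) and multiplicativity from the fact that the truncation tower lives in $E_{\infty}$-algebras as noted after proposition \ref{prop:prod.on.t.str}. The only difference is that you spell out the connectivity of $Q \wedge_R F_t[t]$ via the bar construction and proposition \ref{prop:prod.on.t.str}, a verification the paper leaves implicit.
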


\begin{rmk}
    Observe that, as $R$ is connective with respect to our t-structure, the spectral sequence is non-zero for positive $t$; in other words, it lives in the upper half-plane. It is also worth noticing that the differentials do not alter the third index: we might interpret it as having a countable family of spectral sequences, indexed by the weight (although in this case, we would lose the ring structure of the pages).
\end{rmk}

We pass now to refining this result under stronger assumptions. We first look at the convergence term.

\begin{lemma} \label{lemma:limit.of.Qn}
    Assume, under hypothesis \ref{hyp:map.of.spectra}, that $\pi_{*,*}(Q)$ (and hence $\pi_{*,*}(R)$) is an algebra over some field $k$ (which is concentrated in degree $(0,0)$) and that $\pi_{i,*}(R) \cong \pi_{i,*}(Q) \cong 0$ for $i<0$.     
    Then there is an isomorphism of bi-graded rings:
    \[
        \pi_{*,*}(\lim_{\overleftarrow{n}} Q_n) \cong \pi_{*,*}(Q).
    \]
\end{lemma}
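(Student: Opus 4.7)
The plan is to show that the natural map $Q \to \lim_n Q_n$ induced by the Postnikov tower of $R$ is an equivalence; the bigraded ring isomorphism then follows automatically, since (as explained at the end of subsection \ref{subs:t.struct}) the $R_{\leq n}$ and hence the $Q_n$ live in $Alg_{E_\infty}(Mod_Q)$, the limit is an $E_\infty$-algebra, and the map $Q \to \lim_n Q_n$ is a map of $E_\infty$-algebras.

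To construct and analyse the map, apply the base-change $Q \wedge_R -\colon Mod_R \to Mod_Q$ to the $Mod_R$-fiber sequence $\tau_{\geq n+1} R \to R \to R_{\leq n}$. Being a left adjoint between stable $\infty$-categories it is exact and colimit-preserving, so one obtains a fiber sequence $Q \wedge_R \tau_{\geq n+1} R \to Q \to Q_n$ in $Mod_Q$ compatibly in $n$. Sequential limits preserve fiber sequences, so passing to the limit yields
\[
    \lim_n (Q \wedge_R \tau_{\geq n+1} R) \to Q \to \lim_n Q_n,
\]
and it suffices to show that $\lim_n (Q \wedge_R \tau_{\geq n+1} R) \simeq 0$.

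The key technical input is that $Q \wedge_R -$ sends $Mod_{R,\geq m}$ into $Mod_{Q,\geq m}$ for every $m$. Since it preserves colimits and fiber sequences (and hence extensions) and sends the generators $\Sigma^{0,i} R$ to $\Sigma^{0,i} Q$, the full subcategory of $Mod_R$ whose image lies in $Mod_{Q,\geq 0}$ is closed under colimits and extensions and contains the generators of $Mod_{R,\geq 0}$; by definition \ref{defi:t-structure} it equals $Mod_{R,\geq 0}$. Shifting gives the claim, so $Q \wedge_R \tau_{\geq n+1} R \in Mod_{Q,\geq n+1}$, and proposition \ref{prop:htpygpscof}---applicable because $\pi_{i,*}(Q) = 0$ for $i < 0$---forces $\pi_{i,*}(Q \wedge_R \tau_{\geq n+1} R) = 0$ for $i \leq n$.

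For any fixed bidegree $(s,w)$ the tower $\{\pi_{s,w}(Q \wedge_R \tau_{\geq n+1} R)\}_n$ is therefore eventually zero (namely whenever $n \geq s$), so both $\lim$ and $\lim^1$ of the tower vanish. The Milnor short exact sequence then forces $\pi_{s,w}(\lim_n (Q \wedge_R \tau_{\geq n+1} R)) = 0$ for all $(s,w)$, so the limit is contractible and $Q \simeq \lim_n Q_n$. I expect the step requiring the most care to be the preservation of connectivity under $Q \wedge_R -$: the cleanest argument is the closure-of-subcategory trick above, which depends only on the identification $\Sigma^{0,i} R \wedge_R Q \simeq \Sigma^{0,i} Q$ and on the fact that an exact left adjoint preserves the property of being generated under colimits and extensions by a given set of objects.
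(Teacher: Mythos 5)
Your overall strategy is genuinely different from the paper's: the paper compares $\pi_{*,*}(Q\wedge_R R_{\leq n})$ with $\pi_{*,*}(Q)$ in degrees $\leq n$ via the Tor spectral sequence of \cite[Proposition 7.7]{DugIsa2005}, computed with the bar complex over the field $k$, and then lets $n\to\infty$; you instead aim at the stronger statement $Q\simeq \lim_n Q_n$ by showing that the fibres $Q\wedge_R fib(R\to R_{\leq n})$ become arbitrarily connective and invoking the Milnor sequence (your treatment of the limit is in fact more careful than the paper's). However, the central connectivity step has a genuine gap: you prove that $Q\wedge_R-$ carries $Mod_{R,\geq m}$ (the $t$-structure on $Mod_R$ generated by $\{\Sigma^{0,i}R\}_{i\in\mathbb Z}$) into $Mod_{Q,\geq m}$, and then apply this to $\tau_{\geq n+1}R=fib(R\to R_{\leq n})$. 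But the tower $R_{\leq n}$ is the Postnikov tower of $R$ for the $t$-structure on $Mod_Q$, so all you know a priori is $\tau_{\geq n+1}R\in Mod_{Q,\geq n+1}$; the assertion $\tau_{\geq n+1}R\in Mod_{R,\geq n+1}$, which your ``key technical input'' requires, is never proved and is not formal, since the two $t$-structures live in different module categories. (A minor further slip: your closure argument shows the subcategory \emph{contains} $Mod_{R,\geq 0}$, not that it equals it.)

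The gap is fixable, but an argument must be supplied. For instance: coconnectivity in $Mod_R$ is detected on underlying $Q$-modules, since $map_{Mod_R}(\Sigma^{i,j}R,Y)\simeq map_{\mathcal{SH}(S)}(S^{i,j},Y)\simeq map_{Mod_Q}(\Sigma^{i,j}Q,Y)$ as in \ref{eqn:sh.modQ.adjunction}; hence the paper's $R_{\leq n}$, viewed as an $R$-module via the ring map $R\to R_{\leq n}$, lies in $Mod_{R,\leq n}$, the canonical factorisation $\tau^{Mod_R}_{\leq n}R\to R_{\leq n}$ is an equivalence on underlying $Q$-modules and therefore an equivalence of $R$-modules, so $fib(R\to R_{\leq n})\simeq \tau^{Mod_R}_{\geq n+1}R\in Mod_{R,\geq n+1}$; after this your argument goes through (and, notably, never uses the hypothesis that $\pi_{*,*}Q$ is an algebra over a field, which the paper needs to make the bar complex a flat resolution). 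Alternatively, one can avoid the $Mod_R$ $t$-structure entirely and obtain the connectivity of $Q\wedge_R fib(R\to R_{\leq n})$ from the same Tor/cellularity spectral sequence the paper uses. As written, though, the step ``so $Q\wedge_R\tau_{\geq n+1}R\in Mod_{Q,\geq n+1}$'' does not follow.
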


\begin{proof}
    We set up a Tor spectral sequence as in \cite[Proposition 7.7]{DugIsa2005}: it is a strongly convergent tri-graded spectral sequence
    \[
        E^2_{a,(b,c)} = Tor^{\pi_{*,*}R}_{a,(b,c)}(\pi_{*,*}Q, \pi_{*,*}R_{\leq n}) \Rightarrow \pi_{a+b,c}(Q \wedge_{R} R_{\leq n})
    \]
    For the rest of this proof, to improve readability, we use the following notation for motivic homotopy rings: $Q_{\star}$ will mean $\pi_{*,*} Q$, and similarly for the other spectra. 
    We analyse the $Tor$ modules appearing in the $E^2$ page, expressing them as the homology of the bar complex on $R_{\star}$ seen as a $k$-algebra (working over a field ensures that the $Bar$ complex provides a flat resolution):
    \begin{equation}\label{eqn:tor.spec.seq.Q.R}
        Tor^{R_{\star}}_{\bullet,(\star)}( Q_{\star}, R_{\leq n,\star}) =H_{\bullet}( Bar( Q_{\star},R_{\star},R_{\leq n,\star}))
    \end{equation}
    where:
    \begin{align*}
       Bar( Q_{\star},R_{\star},R_{\leq n,\star})=& Q_{\star}\otimes_k R_{\leq n,\star} \leftarrow  Q_{\star}\otimes_k R_{\star} \otimes_k R_{\leq n,\star} \\ 
       &\leftarrow  Q_{\star}\otimes_k R_{\star} \otimes_k R_{\star} \otimes_k R_{\leq n,\star} \leftarrow \ldots
    \end{align*}
    We wish to compare now this with the complex $Bar( Q_{\star},R_{\star},R_{\star})$ computing $Tor^{R_{\star}}_{\bullet,(\star)}( Q_{\star}, R)$; as $\pi_{i,*}(Q) \cong 0$ for $i<0$, we deduce as in \ref{eqn:homotopyidentities3}:
    \[
        \pi_{b,c}(R_{\leq n}) \cong \pi_{b,c}(R) \text{ for } b \leq n
    \]
    In particular, also $\pi_{b,c}(R_{\leq n})$ is connective in the first degree. Now recall that for all $m \geq 0$ we can write:
    \begin{align*}
        (Q_{\star}\otimes_k R_{\star}^{\otimes_k m} &\otimes_k R_{\leq n,\star})_{b,c}
        \\&\cong \bigoplus_{d,e \in \mathbb{Z}} (Q_{\star}\otimes_k R_{\star}^{\otimes_k m})_{b-d,c-e} \otimes_k R_{\leq n,(d,e)}
    \intertext{All the spectra involved are connective, we can bound the index $d$ to the terms with non-zero homotopy, so:}
    & \cong \bigoplus_{\substack{0 \leq d \leq b \\e \in \mathbb{Z}}} (Q_{\star}\otimes_k R_{\star}^{\otimes_k m})_{b-d,c-e} \otimes_k R_{\leq n,(d,e)}
    \intertext{If $b \leq n$:}
    & \cong \bigoplus_{\substack{0 \leq d \leq b \\e \in \mathbb{Z}}} (Q_{\star}\otimes_k R_{\star}^{\otimes_k m})_{b-d,c-e} \otimes_k R_{(d,e)}\\
    & \cong (Q_{\star}\otimes_k R_{\star}^{\otimes_k m} \otimes_k R_{\star})_{b,c}
    \end{align*}

    As then the two bar complexes:
    
    \begin{adjustbox}{max width=\textwidth, center}
        \begin{tikzcd}
        Q_{\star}\otimes_k R_{\leq n,\star} 
        &Q_{\star}\otimes_k R_{\star} \otimes_k R_{\leq n,\star} 
            \arrow{l}
        &  Q_{\star}\otimes_k R_{\star} \otimes_k  R_{\star} \otimes_k R_{\leq n,\star} 
            \arrow{l}
        & \ldots
            \arrow{l}
        \\ Q_{\star}\otimes_k R_{\star}
            \arrow{u}
        &  Q_{\star}\otimes_k R_{\star} \otimes_k R_{\star}
            \arrow{u}
            \arrow{l}
        &  Q_{\star}\otimes_k R_{\star} \otimes_k  R_{\star} \otimes_k R_{\star}
            \arrow{u}
            \arrow{l}
        & \ldots
            \arrow{l}
    \end{tikzcd}
    \end{adjustbox}
    
    are isomorphic up to degree $n$, also their homologies will be in the same range:
    \[
        Tor^{R_{\star}}_{a,(b,c)}(Q_{\star}, R_{\leq n, \star}) \cong Tor^{R_{\star}}_{a,(b,c)}(Q_{\star}, R_{\star})
    \]
    Now, $Tor^{R_{\star}}_{a,(b,c)}(Q_{\star}, R_{\star})\cong 0$ for all $a>0$, as $R_{\star}$ is flat as a $R_{\star}$-module, while:
    \[
        Tor^{R_{\star}}_{0,(b,c)}(Q_{\star}, R_{\star})\cong Q_{\star} \otimes_{R_{\star}} R_{\star} \cong Q_{\star}.
    \]
    We see this depicted in figure \ref{fig:visual.spec.seq}.
    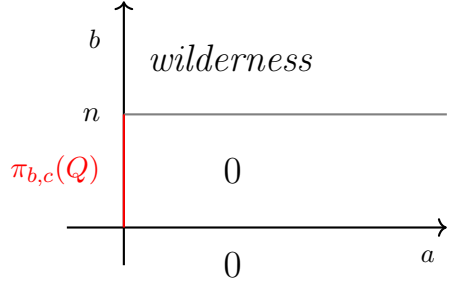
\begin{figure}[htb]
        \centering

\begin{tikzpicture}[scale=0.5,line width=1pt]

\draw[black, line width=0.8pt,->](-1.5,0)--(8.5,0);
\draw[black, line width=0.8pt,->] (0,-1)--(0,6);

\node[label={180:{\footnotesize $b$}}] at (0,5){};
\node[label={270:{\footnotesize $a$}}] at (8,0){};

\draw[gray, line width=0.8pt] (0,3)--(8.5,3);

\draw[red, line width=0.8pt] (0,0)--(0,3);

\node[label={[red] 180:{\small $\pi_{b,c}(Q)$}}] at (0,1.5){};

\node[label={180:{\small $n$}}] at (0,3){};

\node[label={0:{\large 0}}] at (2,1.5){};
\node[label={0:{\large 0}}] at (2,-1){};
\node[label={0:{\large \textit{wilderness}}}] at (0,4.5){};
\end{tikzpicture}

\caption{A visual representation of the $Tor$ spectral sequence \ref{eqn:tor.spec.seq.Q.R}.}
        \label{fig:visual.spec.seq}
    \end{figure}
    
    But then the spectral sequence is trivial for $b \leq n$, and we can identify:
    \[
        \pi_{b,c} (Q \wedge_{R} R_{\leq n}) \cong \pi_{b,c} Q
    \]
    for $b \leq n$ and all $c$. Taking the limit for $n \rightarrow \infty$ produces the desired result.
     
\end{proof}

Next, we aim at having a nicer $E^2$ term for \ref{eqn:ugly.spectral.sequence}.

\begin{lemma} \label{lemma:smash.becomes.tensor}
    Under hypothesis \ref{hyp:map.of.spectra}, assume that the shifted fibres $F_n$ are flat over $Q$.
    
    Then we have isomorphisms:
    \[
        \pi_{s+t,*}(Q \wedge_{R} F_t[t]) \cong \pi_{s,*}P\otimes_{\pi_{0,*}Q} \pi_{0,*}F_t
    \] 
    with $P=Q\wedge_R Q$. 
\end{lemma}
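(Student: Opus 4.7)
The plan is to rewrite $Q\wedge_R F_t[t]$ as a product over $Q$ and then collapse a Künneth-type spectral sequence using the flatness hypothesis.

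First I would use the fact that $F_t[t]=\mathrm{fib}(R_{\leq t}\to R_{\leq t-1})$ is the fibre of a map of $R$-algebras in $Mod_Q$, so it carries an $(R,Q)$-bimodule structure. Associativity of the smash product over bimodules then gives
\[
Q\wedge_R F_t[t]\simeq \Sigma^{t,0}\bigl((Q\wedge_R Q)\wedge_Q F_t\bigr)=\Sigma^{t,0}(P\wedge_Q F_t),
\]
so that $\pi_{s+t,*}(Q\wedge_R F_t[t])\cong\pi_{s,*}(P\wedge_Q F_t)$. From the analysis leading to \ref{eqn:homotopy.of.fibre1}--\ref{eqn:homotopy.of.fibre2}, I would observe that $F_t[t]$ has motivic homotopy concentrated in first-degree $t$, so $F_t$ lies in the heart of the $t$-structure and $\pi_{*,*}F_t=\pi_{0,*}F_t$.

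Next I would apply the motivic Tor spectral sequence used in the proof of Lemma \ref{lemma:limit.of.Qn} (cf.\ \cite[Proposition 7.7]{DugIsa2005}):
\[
E^2_{a,(b,c)}=\mathrm{Tor}^{\pi_{*,*}Q}_{a,(b,c)}(\pi_{*,*}P,\pi_{*,*}F_t)\Rightarrow\pi_{a+b,c}(P\wedge_Q F_t).
\]
The flatness hypothesis forces the higher Tor groups to vanish, so the spectral sequence collapses to $\pi_{*,*}P\otimes_{\pi_{*,*}Q}\pi_{0,*}F_t$. A grading argument — using that $\pi_{0,*}F_t$ is concentrated in first-degree $0$, so that its $\pi_{*,*}Q$-module structure factors through $\pi_{0,*}Q$ — then extracts the bidegree-$(s,*)$ piece as $\pi_{s,*}P\otimes_{\pi_{0,*}Q}\pi_{0,*}F_t$.

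The main obstacle I anticipate is precisely this last identification. In general, for a $\pi_{0,*}Q$-module $N$ and a $\pi_{*,*}Q$-module $M$, the tensor product $M\otimes_{\pi_{*,*}Q}N$ differs from $M\otimes_{\pi_{0,*}Q}N$ because of extra relations coming from the action of strictly positive first-degree elements of $\pi_{*,*}Q$ on $M$. To handle this, one would likely either invoke additional structural hypotheses (for instance, assuming $Q$ lies in the heart so that $\pi_{>0,*}Q=0$ and the issue disappears), or recognise $F_t$ explicitly as a wedge of suspensions $\Sigma^{0,q}Q$ via the description following \ref{eqn:taun.Y.if.union} and compute the product directly as $\bigvee_\alpha\Sigma^{0,q_\alpha}P$, from which the formula follows by a direct inspection of homotopy groups.
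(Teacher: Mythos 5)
Your argument is correct and its first half is exactly the paper's: the paper also begins by rewriting $Q \wedge_{R} F_t[t] \simeq Q \wedge_{R} Q \wedge_{Q} F_t[t] \simeq P \wedge_{Q} F_t[t]$ using that the truncations, hence the fibres, live in $Q$-modules. Where you diverge is the second half: the paper simply quotes the flatness statement \cite[Proposition 7.2.2.13]{Lurie2017} to get $\pi_{s,*}(P\wedge_Q F_t)\cong \pi_{s,*}P\otimes_{\pi_{0,*}Q}\pi_{0,*}F_t$ in one line, whereas you unpack that input by running the Tor spectral sequence of \cite[Proposition 7.7]{DugIsa2005}, as in the proof of Lemma \ref{lemma:limit.of.Qn}, and collapsing it. That route works and is in effect a proof of the cited proposition in the bigraded setting; what the paper's citation buys is brevity, what yours buys is transparency about where flatness enters.

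The one soft spot is the step you flag as the ``main obstacle''. It is not an obstacle, and neither of your fallbacks is needed (nor available under the stated hypotheses: the lemma does not assume $Q$ lies in the heart, and the wedge decomposition of $F_t$ into copies of $\Sigma^{0,q}Q$ used after \ref{eqn:taun.Y.if.union} relies on exactly that extra assumption). Flatness in the sense of \cite[Definition 7.2.2.10]{Lurie2017}, read in the bigraded setting of the paper, says precisely that $\pi_{0,*}F_t$ is flat over $\pi_{0,*}Q$ and that the natural map $\pi_{*,*}Q\otimes_{\pi_{0,*}Q}\pi_{0,*}F_t\to\pi_{*,*}F_t$ is an isomorphism. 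This both guarantees that $\pi_{*,*}F_t$ is flat as a graded $\pi_{*,*}Q$-module (so your higher Tor terms vanish, which otherwise you would still owe an argument for) and resolves the base-ring issue by cancellation:
\[
\pi_{*,*}P\otimes_{\pi_{*,*}Q}\pi_{*,*}F_t\;\cong\;\pi_{*,*}P\otimes_{\pi_{*,*}Q}\bigl(\pi_{*,*}Q\otimes_{\pi_{0,*}Q}\pi_{0,*}F_t\bigr)\;\cong\;\pi_{*,*}P\otimes_{\pi_{0,*}Q}\pi_{0,*}F_t,
\]
whose first-degree-$s$ part is $\pi_{s,*}P\otimes_{\pi_{0,*}Q}\pi_{0,*}F_t$ since $\pi_{0,*}F_t$ and $\pi_{0,*}Q$ are concentrated in first degree $0$. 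Also note that your parenthetical claim that $F_t$ lies in the heart invokes \ref{eqn:homotopy.of.fibre2}, which requires $\pi_{i,*}Q=0$ for $i<0$ — an assumption not made in the lemma — but that claim plays no role once the flatness identity above is used.
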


\begin{proof}
    Since we work in $Q$-modules, we can write:
    \[
        Q \wedge_{R} F_n[n] \simeq
        Q \wedge_{R} Q \wedge_{Q} F_n[n]      \simeq
        P\wedge_{Q} F_n[n].
    \]
    By the flatness assumption, by \cite[Proposition 7.2.2.13]{Lurie2017}:
     \[
        \pi_{s+t,*}(P\wedge_{Q} F_t[t])\simeq\pi_{s,*}(P\wedge_{Q} F_t)\simeq \pi_{s,*} P \otimes_{\pi_{0,*}Q} \pi_{0,*}F.
    \]
\end{proof}

\begin{rmk}
    Observe that flatness, as formulated in \cite[Definition 7.2.2.10]{Lurie2017}, is just a condition on the homotopy groups of the objects involved; it is fairly easy to check it when working with explicit objects, as in our application.
\end{rmk}
\clearpage

\section{\texorpdfstring{Motivic Hochschild homology, $\tau$ inverted.}{Motivic Hochschild homology, tau inverted.}} \label{sec:MHH.tau-1}

Consider now $Q=M\mathbb{Z}/p$, the modulo $p$ motivic cohomology spectrum, and $R=M\mathbb{Z}/p \wedge M\mathbb{Z}/p=\mathcal{A}(p)$, the dual modulo $p$ motivic Steenrod algebra spectrum in the stable homotopy category $\mathcal{SH}(S)$. Observe that these satisfy all the hypothesis of the previous section; in particular connectivity of $\mathcal{A}(p)$ is granted by the $M\mathbb{Z}/p$-module description appearing in \cite[Theorem 1.1]{HKO2013}. If $S=Spec(F)$ is the spectrum of an algebraically closed field of characteristic different than $p$, on the homotopy groups level we have:
\begin{equation}\label{eqn:homotopy.of.MZp.Ap}
\begin{gathered}
    \pi_{*,*}(M\mathbb{Z}/p)  \simeq \mathbb{F}_p[\tau] \text{ with } \left |\tau \right |=(0,-1)\\
    \pi_{*,*}(\mathcal{A}(p))  \simeq 
    \begin{cases}
        \mathbb{F}_2[\tau, \xi_i, \tau_i]_{i \geq 0}/(\tau_i^2-\tau \xi_{i+1}) & \text{ for } p=2\\
        \mathbb{F}_p[\tau, \xi_i, \tau_i]_{i \geq 0}/(\tau_i^2) & \text{ for } p=\text{odd}.
    \end{cases}
\end{gathered}
\end{equation} 
The degrees of the generators are:
\[\arraycolsep=15pt
\begin{array}{lcr}
     |\tau|=(0,-1) & |\tau_i|=(2p^i-1,p^i-1) & |\xi_i|=(2p^i-2, p^i-1).
\end{array}
\]

Notice that the homotopy groups of $M \mathbb{Z}/p$ are concentrated on a vertical line and that $\pi_{\star}(\mathcal{A}(p))$ is a free (in particular: flat) $\pi_{\star}(M \mathbb{Z}/p)$-module.
We define the motivic Hochschild homology of $M\mathbb{Z}/p$ as the derived product:
\[
    MHH(M\mathbb{Z}/p)= M\mathbb{Z}/p \wedge_{M\mathbb{Z}/p \wedge M\mathbb{Z}/p} M\mathbb{Z}/p.
\]
in analogy to what is done for topological Hochschild homology. 

Our aim for this section is to compute the homotopy structure of a closely related object, namely, the one where we add a homotopy inverse to the element $\tau$:
\[
    MHH(M\mathbb{Z}/p)[\tau^{-1}]:= M\mathbb{Z}/p[\tau^{-1}] \wedge_{(M\mathbb{Z}/p \wedge M\mathbb{Z}/p)[\tau^{-1}]} M\mathbb{Z}/p[\tau^{-1}].
\]
Our main reference for inverting homotopy classes at the level of spectra is \cite[Appendix B]{BEO2020}. One may also have a look at \cite{Arthan1983}, where the problem was approached before the $\infty$-categorical machinery. 
We begin by reviewing such constructions.

Given the element $\tau \in \pi_{0,-1}(M\mathbb{Z}/p)=[S^{0,-1},M\mathbb{Z}/p]$, we pick the map (in fact a map, since we may choose it up to homotopy equivalence):
\[
    S^{0,-1}\wedge M\mathbb{Z}/p \xrightarrow{\tau \wedge id_M} M\mathbb{Z}/p\wedge M\mathbb{Z}/p \xrightarrow{\mu} M\mathbb{Z}/p,
\]
$\mu$ being the multiplication. Let $\epsilon: M\mathbb{Z}/p \to  S^{0,1}\wedge M\mathbb{Z}/p $ be an adjoint map to it, which we can identify with $S^{0,1} \wedge(\mu \circ (\tau\wedge id_M))$.

\begin{rmk}
    Unluckily, there is a slightly different notation in \cite{BEO2020}, in particular in \cite[Lemma B.1]{BEO2020}, which is the theoretical result behind this localisation. In particular, they call $\tau$ the map we denote by $\epsilon$ (but still denote the localization of a ring $E$ at the homotopy element $\tau$ as $E[\tau^{-1}]$).
\end{rmk}

We then define the spectrum $M\mathbb{Z}/p[\tau^{-1}]$ as the mapping telescope:

\[\begin{adjustbox}{max width=\textwidth, center}
        $\displaystyle M\mathbb{Z}/p[\tau^{-1}]:=\colim\left(M\mathbb{Z}/p \xrightarrow{\epsilon} S^{0,1}\wedge M\mathbb{Z}/p \xrightarrow{id_{S^{0,1}}\wedge\epsilon} S^{0,2}\wedge M\mathbb{Z}/p \xrightarrow{id_{S^{0,2}}\wedge\epsilon}\cdots\right)$
\end{adjustbox}\]

\begin{rmk}
Since this is a colimit in the $\infty$-categorical sense, one should describe a diagram $N(\mathbb{N}) \to \mathcal{SH}(F)$; however, as observed in the proof of \cite[Lemma B.1]{BEO2020}, the above description implies the correct one.
\end{rmk}

Observe that the element $\tau$ lives in the dual motivic Steenrod algebra as well (where we name it $\tau_{\mathcal{A}}$ for the moment), via the ring map $M\mathbb{Z}/p \to \mathcal{A}(p)$:
\[
 S^{0,-1} \xrightarrow{\tau} M \mathbb{Z}/p \cong M \mathbb{Z}/p \wedge \mathbb{S} \xrightarrow{id_M \wedge i} M \mathbb{Z}/p \wedge M \mathbb{Z}/p = \mathcal{A}(p)
\]
where $i: \mathbb{S} \to M \mathbb{Z}/p$ is the canonical map from the zero object.

We apply then the same localization procedure to the adjoint map $\epsilon_{\mathcal{A}}: \mathcal{A}(p) \to S^{0,1} \wedge \mathcal{A}(p)$ to get:
\[
    \mathcal{A}(p)[\tau_{\mathcal{A}}^{-1}]=(M\mathbb{Z}/p \wedge M\mathbb{Z}/p)[\tau_{\mathcal{A}}^{-1}].
\]

The two localisations are compatible in the following sense. 
First, one has a commutative diagram:

\[\begin{adjustbox}{max width=\textwidth, center}
    \begin{tikzcd}
        S^{0,-1}\wedge \mathcal{A}(p) 
            \arrow[r, "\tau \wedge Id_{\mathcal{A}}"] 
            \arrow[d,"Id_S \wedge \mu_M"] 
            \arrow[rr, bend left, "\tau_{\mathcal{A}} \wedge id_{\mathcal{A}}"]
        & M\mathbb{Z}/p \wedge \mathcal{A}(p) 
            \arrow[r, "i \wedge id_{\mathcal{A}}"] 
            \arrow[d,"id_M \wedge \mu_M"]
        & \mathcal{A}(p) \wedge \mathcal{A}(p) 
            \arrow[r, "\mu_{\mathcal{A}}"]
        & \mathcal{A}(p) 
            \arrow[d,"\mu_M"] 
        \\ S^{0,-1}\wedge M\mathbb{Z}/p 
            \arrow[r, "\tau \wedge Id_{M}"]
        & M\mathbb{Z}/p\wedge M\mathbb{Z}/p 
            \arrow[rr, "\mu_{M}"] 
        && M\mathbb{Z}/p  .
    \end{tikzcd}
\end{adjustbox}\]

The half moon on top commutes by the definition of $\tau_{\mathcal{A}}$, the left square commutes because the map acts independently on the two factors, and the right square commutes because of the compatibility between the ring structures of $\mathcal{A}(p)$ and $M\mathbb{Z}/p$.
Then we have a commutative square:
\[
\begin{tikzcd}
    \mathcal{A} 
        \arrow[r, "\epsilon_{\mathcal{A}}"]
        \arrow[d, "\mu"]
    & S^{0,1} \wedge \mathcal{A}  
        \arrow[d, "id \wedge \mu"] 
    \\ M \mathbb{Z}/p
        \arrow[r, "\epsilon_{M}"]
    & S^{0,1} \wedge M \mathbb{Z}/p
\end{tikzcd}
\]
We can hence suppress the distinction between $\tau_{\mathcal{A}}$ and $\tau$. Applying the maps $\epsilon_M$ and $\epsilon_{\mathcal{A}}$ iteratively, we get a commutative diagram of telescopes of ring spectra:
\begin{center}
    \begin{tikzcd}
        \mathcal{A}(p) 
        \arrow[r, "\epsilon_{\mathcal{A}}"]
        \arrow[d, "\mu_M"] 
        &  S^{0,1}\wedge \mathcal{A}(p) \arrow[r, "Id_{S} \wedge \epsilon_{\mathcal{A}}"] \arrow[d, "Id_{S}\wedge \mu_M"] 
        &  S^{0,2}\wedge \mathcal{A}(p) \arrow[r, "Id_{S} \wedge \epsilon_{\mathcal{A}}"] \arrow[d, "Id_{S}\wedge \mu_M"] 
        &\cdots
        \\ M\mathbb{Z}/p \arrow[r, "\epsilon_{M}"] 
        &  S^{0,1}\wedge M\mathbb{Z}/p \arrow[r, "Id_{S} \wedge \epsilon_{M}"]
        &  S^{0,2}\wedge M\mathbb{Z}/p \arrow[r, "Id_{S} \wedge \epsilon_{M}"]
        &\cdots
    \end{tikzcd}
\end{center}
where by $Id_S$ we mean the identity on whatever sphere we have. But then we have a map of rings between the colimits:
\[
    \mathcal{A}(p)[\tau^{-1}] \to M\mathbb{Z}/p[\tau^{-1}].
\]

Using instead of the multiplication $\mu_M: \mathcal{A}(p) \to M \mathbb{Z}/p$ the inclusion $M \mathbb{Z}/p \to \mathcal{A}(p)$ we get also a map in the other way:
\[
    M\mathbb{Z}/p[\tau^{-1}] \to \mathcal{A}(p)[\tau^{-1}].
\]
This procedure, in addition, preserves the module decomposition of $\mathcal{A}(p)$ appearing in \cite[Theorem 1.1]{HKO2013}, in the sense that $\mathcal{A}(p)[\tau^{-1}]$ can be decomposed as the union of shifted copies of $M\mathbb{Z}/p[\tau^{-1}]$, with the same indices as those appearing in the already mentioned theorem. In particular, $\mathcal{A}(p)[\tau^{-1}]$ is connective in our $t$-structure. This can also be proven with \ref{eqn:homotopyidentities1} and \ref{eqn:homotopyidentities3} ($M\mathbb{Z}/p[\tau^{-1}]$ still has homotopy in a single vertical line) by simple inspection of the homotopy groups, as the spectra involved are cellular.
    
Hence hypothesis \ref{hyp:map.of.spectra} is verified, so, by \ref{prop:ugly.spectral.sequence}, we have:
\begin{prop}\label{prop:ugly.MHH.tau-1.sp.seq}
    There is a strongly convergent, upper-half plane multiplicative spectral sequence:
\[
    E^2_{s,t,*}= \pi_{s+t,*}(M\mathbb{Z}/p[\tau^{-1}] \wedge_{\mathcal{A}(p)[\tau^{-1}]} F_t[t]) \Rightarrow \pi_{s+t,*}(\lim_{\overleftarrow{n}} M\mathbb{Z}/p[\tau^{-1}]_n)
\]

with differentials of the form
\[
    d^r: E^r_{s,t,*} \to E^r_{s-r,t+r-1,*}.
\]
where 
\[
    F_t[t]=fib((\mathcal{A}(p)[\tau^{-1}])_{\leq t} \to (\mathcal{A}(p)[\tau^{-1}])_{\leq t-1})
\]
and 
\[
    M\mathbb{Z}/p[\tau^{-1}]_n = M\mathbb{Z}/p[\tau^{-1}] \wedge_{\mathcal{A}(p)[\tau^{-1}]} (\mathcal{A}(p)[\tau^{-1}])_{\leq n}.
\]
\end{prop}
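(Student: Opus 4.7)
My plan is that this is essentially a verification proof: the statement is just the specialisation of proposition \ref{prop:ugly.spectral.sequence} to the pair $(Q,R)=(M\mathbb{Z}/p[\tau^{-1}],\mathcal{A}(p)[\tau^{-1}])$, so the whole task amounts to checking that hypothesis \ref{hyp:map.of.spectra} is satisfied in this setting.

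First I would record that both $M\mathbb{Z}/p[\tau^{-1}]$ and $\mathcal{A}(p)[\tau^{-1}]$ are commutative motivic ring spectra: by \cite[Lemma B.1]{BEO2020} inverting a homotopy class of an $E_{\infty}$-ring spectrum produces an $E_{\infty}$-ring spectrum, and the compatibility diagram built in the discussion before the proposition shows that the natural map $\mathcal{A}(p)[\tau^{-1}]\to M\mathbb{Z}/p[\tau^{-1}]$ is a map of $E_{\infty}$-algebras. Under the identification $Alg_{E_{\infty}}(\mathcal{SH}(S))_{Q/}\cong Alg_{E_{\infty}}(Mod_Q)$, this gives the required object of $Alg_{E_{\infty}}(Mod_{M\mathbb{Z}/p[\tau^{-1}]})$.

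Next I would verify the connectivity assumption $R\cong \tau_{\geq 0}R$ with respect to the $t$-structure of definition \ref{defi:t-structure} on $Mod_{M\mathbb{Z}/p[\tau^{-1}]}$. As indicated in the paragraph above, the $M\mathbb{Z}/p$-module decomposition of $\mathcal{A}(p)$ from \cite[Theorem 1.1]{HKO2013} localises to a decomposition of $\mathcal{A}(p)[\tau^{-1}]$ as a coproduct of non-negatively-shifted copies of $M\mathbb{Z}/p[\tau^{-1}]$; each such summand lies in $Mod_{M\mathbb{Z}/p[\tau^{-1}],\geq 0}$ by definition, and the non-negative part is closed under coproducts, so $\mathcal{A}(p)[\tau^{-1}]\in Mod_{M\mathbb{Z}/p[\tau^{-1}],\geq 0}$. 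As a sanity check, one can reprove this by homotopy groups: the homotopy of $M\mathbb{Z}/p[\tau^{-1}]$ is concentrated on the vertical line $s=0$ (so its connectivity is immediate), and the flat decomposition transfers the same property to $\mathcal{A}(p)[\tau^{-1}]$ in the non-negative degrees prescribed by loc.\ cit., matching the characterisation of the truncation functors given by equations \ref{eqn:homotopyidentities1} and \ref{eqn:homotopyidentities3}.

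With hypothesis \ref{hyp:map.of.spectra} in hand, proposition \ref{prop:ugly.spectral.sequence} applies verbatim: taking the Postnikov tower $R_{\leq n}=\mathcal{A}(p)[\tau^{-1}]_{\leq n}$ and forming $Q_n:=M\mathbb{Z}/p[\tau^{-1}]\wedge_{\mathcal{A}(p)[\tau^{-1}]}\mathcal{A}(p)[\tau^{-1}]_{\leq n}$ yields the exact couple \ref{eqn:exact.couple}, and the resulting homotopy spectral sequence strongly converges with differentials of the stated shape. The only content left is unwinding notation: the generic $F_t$ becomes $\Sigma^{-t,0}fib(\mathcal{A}(p)[\tau^{-1}]_{\leq t}\to \mathcal{A}(p)[\tau^{-1}]_{\leq t-1})$ and $Q_n$ becomes $M\mathbb{Z}/p[\tau^{-1}]_n$, exactly as in the statement. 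The main potential obstacle is the connectivity check, but since it is already reduced to the module decomposition in \cite[Theorem 1.1]{HKO2013}, no new work is required beyond citing and localising that result.
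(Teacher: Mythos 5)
Your proposal matches the paper's own argument: the paper likewise reduces the statement to verifying hypothesis \ref{hyp:map.of.spectra} for the pair $(M\mathbb{Z}/p[\tau^{-1}],\mathcal{A}(p)[\tau^{-1}])$, using the compatibility of the two $\tau$-localisations (via \cite[Appendix B]{BEO2020}) to get the ring maps in both directions, and the localised $M\mathbb{Z}/p$-module decomposition of \cite[Theorem 1.1]{HKO2013} (or, alternatively, inspection of homotopy groups via \ref{eqn:homotopyidentities1} and \ref{eqn:homotopyidentities3}) for connectivity, before invoking proposition \ref{prop:ugly.spectral.sequence}. The only cosmetic remark is to make explicit that the unit map $M\mathbb{Z}/p[\tau^{-1}]\to\mathcal{A}(p)[\tau^{-1}]$ (induced by the inclusion rather than the multiplication) is what exhibits $\mathcal{A}(p)[\tau^{-1}]$ as an object of $Alg_{E_\infty}(Mod_{M\mathbb{Z}/p[\tau^{-1}]})$, as the paper does.
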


Now, inverting $\tau$ for our spectra means that their homotopy rings become (confront with \ref{eqn:homotopy.of.MZp.Ap}):
\begin{equation}\label{eqn:homotopy.MZp.A.tau-1}
    \begin{gathered}
        \pi_{*,*}(M\mathbb{Z}/p)[\tau^{-1}]  \simeq \mathbb{F}_p[\tau^{\pm 1}]\\
        \pi_{*,*}(\mathcal{A}(p)) [\tau^{-1}]  \simeq 
        \begin{cases}
            \mathbb{F}_2[\tau^{\pm 1}, \tau_i]_{i \geq 0} & \text{ for } p=2\\
            \mathbb{F}_p[\tau^{\pm 1}, \xi_i, \tau_i]_{i \geq 0}/(\tau_i^2) & \text{ for } p=\text{odd}.
        \end{cases}
    \end{gathered}
\end{equation}

As before, we want to
get a nicer-looking spectral sequence to make computations easier. From \ref{eqn:homotopy.MZp.A.tau-1}, we can see that $\pi_{\star}(M\mathbb{Z}/p[\tau^{-1}])$ is concentrated in the vertical line of first degree equal to 0; in particular, it vanishes in negative degrees. A quick look at the homotopy groups shows that both $\pi_{\star}(M\mathbb{Z}/p[\tau^{-1}])$ and $\pi_{\star}(\mathcal{A}(p)[\tau^{-1}])$ are algebras over the field $\mathbb{F}_p$, so lemma \ref{lemma:limit.of.Qn} applies:  $\pi_{s+t,*}(\lim\limits_{\overleftarrow{n}} M\mathbb{Z}/p[\tau^{-1}]_n) \cong \pi_{s+t,*}(M\mathbb{Z}/p[\tau^{-1}])$.

Observing again the ring structure of $\pi_{*,*}(\mathcal{A}(p)) [\tau^{-1}]$ and recalling that, as the homotopy of $M\mathbb{Z}/p)[\tau^{-1}$ is concentrated in our setting in the zeroth column, truncating with respect to our $t$-structure is the same as truncating with respect to the first degree (\ref{eqn:homotopy.of.fibre1} and \ref{eqn:homotopy.of.fibre2}), one notices that the shifted fibres $F_t$ are flat over $M\mathbb{Z}/p[\tau^{-1}]$ (in particular they are again free modules). So lemma \ref{lemma:smash.becomes.tensor} applies:
\[
\begin{adjustbox}{max width=\textwidth, center}
    $\displaystyle
    \pi_{s+t,*}(M\mathbb{Z}/p[\tau^{-1}] \wedge_{\mathcal{A}(p)[\tau^{-1}]} F_t[t]) \cong \pi_{s,*}MHH(M\mathbb{Z}/p)[\tau^{-1}]\otimes_{\pi_{0,*}\mathcal{A}(p)[\tau^{-1}]}M\mathbb{Z}/p[\tau^{-1}].$
\end{adjustbox}
\]

Finally, notice that $M\mathbb{Z}/p[\tau^{-1}]$ is the unit in commutative algebras over $M\mathbb{Z}/p[\tau^{-1}]$, so the pushout square:
\[
\begin{tikzcd}
    \mathcal{A}(p)[\tau^{-1}]
        \arrow{d}
        \arrow{r}
    &M\mathbb{Z}/p[\tau^{-1}]
        \arrow{d}
    \\M\mathbb{Z}/p[\tau^{-1}]
        \arrow{r}
    &MHH(M\mathbb{Z}/p)[\tau^{-1}]
\end{tikzcd}
\]
provides a (co) fibre sequence:
\[
    M\mathbb{Z}/p[\tau^{-1}] \to MHH(M\mathbb{Z}/p)[\tau^{-1}] \to \Sigma^{1,0}\mathcal{A}(p)[\tau^{-1}].
\]
As $M\mathbb{Z}/p[\tau^{-1}]$ and $\Sigma^{1,0}\mathcal{A}(p)[\tau^{-1}]$ belong to the non-negative part of the $t$-structure, which is closed under extensions, also $MHH(M\mathbb{Z}/p)[\tau^{-1}]$ will; this identifies, in our context, with having homotopy groups concentrated in positive degrees.
Putting it all together:

\begin{prop}\label{prop:spectral.seq.tau.-1}
Let $F$ be an algebraically closed field, and let $p$ be a prime number, $p\neq char(F)$. 
    \begin{itemize}
        \item Let $M\mathbb{Z}/p[\tau^{-1}]$ be the colimit of:
        \[
            M\mathbb{Z}/p \to S^{0,1} \wedge M\mathbb{Z}/p \to S^{0,2} \wedge M\mathbb{Z}/p \to \cdots
        \]
        where the horizontal maps are induced by the homotopy element $\tau: S^{0,-1} \to M\mathbb{Z}/p$.
        \item Let $\mathcal{A}(p)[\tau^{-1}]$ be the colimit of:
        \[
            \mathcal{A}(p) \to S^{0,1} \wedge \mathcal{A}(p) \to S^{0,2} \wedge \mathcal{A}(p) \to \cdots
        \]
        where the horizontal maps are induced by the homotopy element $\tau: S^{0,-1} \to \mathcal{A}(p)$.
        \item Let
        \[
            MHH(M\mathbb{Z}/p)[\tau^{-1}]=M\mathbb{Z}/p[\tau^{-1}]\wedge_{\mathcal{A}(p)[\tau^{-1}]}M\mathbb{Z}/p[\tau^{-1}]
        \]
        be their pushout.
    \end{itemize} 
    Then we have a first-quadrant multiplicative spectral sequence:
\[
\begin{adjustbox}{max width=\textwidth, center}
$\displaystyle
   E^2_{s,t,*}=\pi_{s,*}(MHH(M\mathbb{Z}/p)[\tau^{-1}]) \otimes_{\pi_{0,*}(M\mathbb{Z}/p[\tau^{-1}])} \pi_{t,*}(\mathcal{A}(p)[\tau^{-1}]) \Rightarrow \pi_{s+t,*}(M\mathbb{Z}/p[\tau^{-1}]).
$    
\end{adjustbox}
\]
with differentials:
\[
    d^k=d^k_{s,t,*}:E^k_{s,t,*} \to E^k_{s-k,t+k-1,*}.
\]
\end{prop}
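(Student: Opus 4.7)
The plan is to assemble the three preparatory results of the previous section into a final statement for the specific case $Q = M\mathbb{Z}/p[\tau^{-1}]$, $R = \mathcal{A}(p)[\tau^{-1}]$. Most of the work has already been recorded in the discussion preceding the proposition, so the proof will consist of checking that each hypothesis holds and then invoking the corresponding lemma.

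First, I would verify that hypothesis \ref{hyp:map.of.spectra} is satisfied. The map $R \to Q$ of $E_\infty$ motivic ring spectra is the localisation of the multiplication $\mu_M : \mathcal{A}(p) \to M\mathbb{Z}/p$, whose compatibility with inverting $\tau$ was established in the commutative telescope diagram preceding this proposition. Connectivity of $R = \mathcal{A}(p)[\tau^{-1}]$ with respect to the $t$-structure of definition \ref{defi:t-structure} follows from the $M\mathbb{Z}/p$-module decomposition of \cite[Theorem 1.1]{HKO2013}, which survives the $\tau$-localisation and presents $\mathcal{A}(p)[\tau^{-1}]$ as a coproduct of non-negative shifts of $M\mathbb{Z}/p[\tau^{-1}]$; alternatively, since $\pi_{*,*}(M\mathbb{Z}/p[\tau^{-1}])$ sits in the vertical line of first degree $0$, the homotopy computation \ref{eqn:homotopy.MZp.A.tau-1} shows all classes of $\pi_{*,*}(\mathcal{A}(p)[\tau^{-1}])$ lie in non-negative first degree, which is all we need by \ref{eqn:homotopyidentities1} and \ref{eqn:homotopyidentities3}.

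With the hypothesis in place, proposition \ref{prop:ugly.spectral.sequence} (recorded as \ref{prop:ugly.MHH.tau-1.sp.seq}) produces a strongly convergent, upper half-plane multiplicative spectral sequence with the correct differentials. The next step is to replace the two ugly terms by the stated ones. Both $\pi_{*,*}(M\mathbb{Z}/p[\tau^{-1}]) \cong \mathbb{F}_p[\tau^{\pm 1}]$ and $\pi_{*,*}(\mathcal{A}(p)[\tau^{-1}])$ are algebras over $k = \mathbb{F}_p$, concentrated in non-negative first degree, so lemma \ref{lemma:limit.of.Qn} identifies the convergence term $\pi_{s+t,*}(\lim_{\overleftarrow{n}} M\mathbb{Z}/p[\tau^{-1}]_n)$ with $\pi_{s+t,*}(M\mathbb{Z}/p[\tau^{-1}])$. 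For the $E^2$ page, I must check that the shifted fibres $F_t$ are flat over $M\mathbb{Z}/p[\tau^{-1}]$ in the sense of \cite[Definition 7.2.2.10]{Lurie2017}. Since $\pi_{*,*}(M\mathbb{Z}/p[\tau^{-1}])$ is a line, truncation in our $t$-structure coincides with truncation on first-degree homotopy, and the Postnikov slices of $\mathcal{A}(p)[\tau^{-1}]$ are free on the generators of a fixed first degree; the shifted fibres $F_t$ are therefore free $M\mathbb{Z}/p[\tau^{-1}]$-modules, a fortiori flat. Lemma \ref{lemma:smash.becomes.tensor} then rewrites the $E^2$ page as $\pi_{s,*}(MHH(M\mathbb{Z}/p)[\tau^{-1}]) \otimes_{\pi_{0,*}(M\mathbb{Z}/p[\tau^{-1}])} \pi_{0,*}(F_t)$, and $\pi_{0,*}(F_t) \cong \pi_{t,*}(\mathcal{A}(p)[\tau^{-1}])$ because the truncation slices agree with the first-degree slices.

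Finally, I need to upgrade the upper half-plane spectral sequence to a first-quadrant one, i.e.\ to verify that $\pi_{s,*}(MHH(M\mathbb{Z}/p)[\tau^{-1}])$ vanishes for $s < 0$. For this I use the pushout-induced cofibre sequence
\[
    M\mathbb{Z}/p[\tau^{-1}] \to MHH(M\mathbb{Z}/p)[\tau^{-1}] \to \Sigma^{1,0}\mathcal{A}(p)[\tau^{-1}],
\]
both of whose outer terms lie in $Mod_{Q,\geq 0}$. Since the non-negative part of the $t$-structure is closed under extensions, $MHH(M\mathbb{Z}/p)[\tau^{-1}]$ is connective, and in our setting (where homotopy is supported on a single vertical line) this is exactly the vanishing in negative first degree. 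The main obstacle, as I see it, is the verification of the flatness condition needed to apply lemma \ref{lemma:smash.becomes.tensor}: this ultimately rests on the compatibility between the ring-theoretic structure of $\pi_{*,*}(\mathcal{A}(p)[\tau^{-1}])$ and the $t$-structure truncations, and demands that the shifted fibres be read off correctly as free $M\mathbb{Z}/p[\tau^{-1}]$-modules of the expected first degree.
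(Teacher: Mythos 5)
Your proposal is correct and follows essentially the same route as the paper: verify hypothesis \ref{hyp:map.of.spectra} (connectivity via the preserved module decomposition or by inspecting homotopy groups), invoke the general spectral sequence of proposition \ref{prop:ugly.spectral.sequence}, identify the abutment via lemma \ref{lemma:limit.of.Qn}, rewrite the $E^2$ page via flatness/freeness of the shifted fibres and lemma \ref{lemma:smash.becomes.tensor}, and obtain the first-quadrant statement from the cofibre sequence $M\mathbb{Z}/p[\tau^{-1}] \to MHH(M\mathbb{Z}/p)[\tau^{-1}] \to \Sigma^{1,0}\mathcal{A}(p)[\tau^{-1}]$ together with closure of the non-negative part under extensions. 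Your explicit identification $\pi_{0,*}(F_t)\cong\pi_{t,*}(\mathcal{A}(p)[\tau^{-1}])$ is a welcome clarification of a step the paper leaves implicit.
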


Observe in particular that the convergence term $\pi_{s+t,*}(M\mathbb{Z}/p[\tau^{-1}])$ is non-zero only for $s+t=0$. As the spectral sequence is first-quadrant, in the $E^{\infty}$ page we must have:
\[
E^{\infty}_{s,t,*} \cong 
\begin{cases}
    \pi_{0,*}(M\mathbb{Z}/p[\tau^{-1}]) &\text{ for } s=t=0\\
    0 &\text{ elsewhere.}
\end{cases}
\]

\clearpage

\subsection{The computation} \label{subs:computation}

Before proceeding with the computation of $\pi_{*,*}MHH(M\mathbb{Z}/p)[\tau^{-1}]$, we first need to introduce a further instrument to help us to deal, in particular, with the multiplicative structure of the homotopy rings involved. We define the suspension map:
\begin{equation}
  \begin{array}{ll}
   \sigma: &\Sigma^{1,0}(M\mathbb{F}_p \wedge M\mathbb{F}_p) \to S^1_+\wedge M\mathbb{F}_p \wedge M\mathbb{F}_p \\
    &\to M\mathbb{F}_p \wedge (S^1 \otimes M\mathbb{F}_p) \cong M\mathbb{F}_p \wedge MHH(\mathbb{F}_p) \\
    &\xrightarrow{mult.} MHH(\mathbb{F}_p) 
  \end{array}  
\end{equation}

Seen as a pushforward on homotopy groups, it gives a map:
\begin{equation}
   \sigma_*: \mathcal{A}(p)_{s-1,t}\to  MHH_{s,t}(\mathbb{F}_p) 
\end{equation}
whose action on certain classes can be explicited (\cite{DHOO2022}, Lemma 2.3):
\begin{prop} \label{prop:mot.relations}
Let $F$ be an algebraically closed field of characteristic different that $p$ and let $MHH(M\mathbb{Z}/p) \in \mathcal{SH}(F)$ be the motivic Hochschild homology spectrum of $M\mathbb{Z}/p$; in  $MHH_{\star}(\mathbb{F}_p)$ we have the relations:
\begin{equation*}
    \tau^{p-1} \sigma_*\tau_{i+1}=(\sigma_*\tau_i)^p, \qquad \tau^{p-1} \sigma_*\xi_{i+1}=0
\end{equation*}
for all $i \geq 0$.
\end{prop}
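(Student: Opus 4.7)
The plan is to split the claim into two essentially independent pieces: a graded-derivation property for $\sigma_*$ on $\pi_{\star}MHH(M\mathbb{Z}/p)$, and a motivic Bökstedt/Frobenius identity in the Hopf-algebroid $\pi_{\star}(M\mathbb{Z}/p \wedge M\mathbb{Z}/p)$. For the derivation, I would use that $M\mathbb{Z}/p$ is $E_\infty$, so $MHH(M\mathbb{Z}/p) \simeq S^1_s \otimes M\mathbb{Z}/p$ carries a natural $S^1$-action; the cofibre sequence $S^0 \to (S^1)_+ \to S^1$ packages this into an operator of bidegree $(1,0)$ on $\pi_{\star}MHH(M\mathbb{Z}/p)$, which is a graded derivation by multiplicativity of the action. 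The factoring through $\mathcal{A}(p)$ in the statement is recovered from the natural $\mathcal{A}(p)$-module structure on $MHH(M\mathbb{Z}/p)$ induced by the two smash factors.

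Given the derivation, the relations are produced by applying $\sigma_*$ to a motivic Bökstedt identity expressing $\tau_{i+1}, \xi_{i+1}$ in terms of $p$-th powers of $\tau_i, \xi_i$ (up to $\tau$-multiples), and then using the Leibniz rule in characteristic $p$ to collapse the interior power terms. The precise coefficient $\tau^{p-1}$ is forced by bidegree bookkeeping: $\tau^{p-1}\sigma_*\tau_{i+1}$ lives in bidegree $(2p^{i+1},\, p^{i+1}-p) = p\cdot|\sigma_*\tau_i|$, so no other scalar multiple can appear opposite $(\sigma_*\tau_i)^p$; the vanishing of $\tau^{p-1}\sigma_*\xi_{i+1}$ is similarly forced since the target bidegree $(2p^{i+1}-1,\, p^{i+1}-p)$ has odd first component, where no product of even-degree $\sigma_*\tau_j$-images can land, while the odd-degree contributions are annihilated by the derivation together with the Frobenius identity.

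The main obstacle is the motivic Bökstedt identity itself at the level of spectra, with the $\tau^{p-1}$ coefficient sharp rather than merely up to higher filtration. This is where I would rely on \cite[Lemma 2.3]{DHOO2022}, which provides the motivic analogue of the classical Bökstedt identity $\sigma\bar{\tau}_{i+1} = (\sigma\bar{\tau}_i)^p$ in $\pi_*THH(\mathbb{F}_p)$. Combined with the derivation and the weight bookkeeping above, it packages directly into the two stated relations, with the $\tau$-power keeping track of the weight discrepancy $|\tau_{i+1}| - p|\tau_i| = (1,p-1)$ inherent to the bigrading of the motivic dual Steenrod algebra.
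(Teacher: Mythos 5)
The statement you are asked to prove is not proved in the paper at all: it is quoted verbatim from \cite[Lemma 2.3]{DHOO2022} (the sentence immediately preceding the proposition says the action of $\sigma_*$ ``can be explicited'' by that lemma). Your proposal, by your own admission, defers ``the main obstacle'' --- the sharp relation with its $\tau^{p-1}$ coefficient --- to that same Lemma 2.3, so in the end it reduces to the paper's own proof-by-citation, and the extra scaffolding you wrap around it does not constitute an independent argument.

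Moreover, the scaffolding contains genuine gaps. First, there is no ``motivic B\"okstedt identity in the Hopf algebroid $\pi_\star(M\mathbb{Z}/p\wedge M\mathbb{Z}/p)$ expressing $\tau_{i+1},\xi_{i+1}$ in terms of $p$-th powers of $\tau_i,\xi_i$'': the only multiplicative relations in the motivic dual Steenrod algebra over an algebraically closed field are $\tau_i^2=\tau\xi_{i+1}$ for $p=2$ and $\tau_i^2=0$ for odd $p$, while the $\xi_i$ are polynomial generators and $\tau_{i+1}$ is a new generator, not a $p$-th power; so there is nothing in the source ring to which one could apply $\sigma_*$ and the Leibniz rule to produce $(\sigma_*\tau_i)^p$. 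The B\"okstedt-type relation is a statement about the multiplicative structure of the target $MHH_\star$ and about how the suspension interacts with power operations (classically $\sigma Q^s=Q^s\sigma$ together with the Dyer--Lashof action on the dual Steenrod algebra), and that interaction is precisely the content of the cited lemma --- it cannot be recovered from a derivation property plus an identity among the $\tau_i,\xi_i$ themselves. Second, the bidegree bookkeeping only checks that the stated relations are homogeneous; it cannot force them. It is equally consistent with $\tau^{p-1}\sigma_*\tau_{i+1}=0$, and your parity argument for $\tau^{p-1}\sigma_*\xi_{i+1}=0$ presupposes that the relevant bidegree of $MHH_\star$ is spanned by products of the even classes $\sigma_*\tau_j$, i.e.\ it presupposes structural knowledge of $\pi_{\star}MHH(M\mathbb{Z}/p)$ that the paper only obtains later \emph{using} these relations, so that step would be circular in context. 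If you intend to cite \cite[Lemma 2.3]{DHOO2022}, the honest proof is simply that citation, which is exactly what the paper does.
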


By repeating the above construction with the $\tau$-inverted spectra, the above result descends to our context of interest:
\begin{cor}\label{cor:power.operations}
    There is a map of spectra 
    \[
        \sigma: \Sigma^{1,0}\mathcal{A}(p)[\tau^{-1}] \to MHH(M\mathbb{Z}/p)[\tau^{-1}]
    \]
    such that on the homotopy rings it induces relations:
    \begin{equation}
    \tau^{p-1} \sigma_*\tau_{i+1}=(\sigma_*\tau_i)^p, \qquad \sigma_*\xi_{i+1}=0
\end{equation}
for all $i \geq 0$.
\end{cor}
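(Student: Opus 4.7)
The plan is to carry out the construction of the suspension map from Proposition~\ref{prop:mot.relations} directly in the $\tau$-inverted setting, and then transport the two stated relations along the naturality of the localization. Concretely, I would mimic the composite used in \ref{prop:mot.relations}, replacing every occurrence of $M\mathbb{F}_p = M\mathbb{Z}/p$ by $M\mathbb{Z}/p[\tau^{-1}]$:
\begin{align*}
    \Sigma^{1,0}\bigl(M\mathbb{Z}/p[\tau^{-1}] \wedge M\mathbb{Z}/p[\tau^{-1}]\bigr) &\to S^1_+ \wedge M\mathbb{Z}/p[\tau^{-1}] \wedge M\mathbb{Z}/p[\tau^{-1}] \\
    &\to M\mathbb{Z}/p[\tau^{-1}] \wedge \bigl(S^1_s \otimes M\mathbb{Z}/p[\tau^{-1}]\bigr) \\
    &\cong M\mathbb{Z}/p[\tau^{-1}] \wedge MHH(M\mathbb{Z}/p)[\tau^{-1}] \xrightarrow{\textit{mult.}} MHH(M\mathbb{Z}/p)[\tau^{-1}].
\end{align*}
The source is identified with $\Sigma^{1,0}\mathcal{A}(p)[\tau^{-1}]$ by using that the sequential colimit defining $(-)[\tau^{-1}]$ commutes with smash products (the monoidal structure on $\mathcal{SH}(F)$ is presentable, so $-\wedge X$ is a left adjoint for each $X$, and in fact this commutation was already invoked in the discussion preceding \ref{prop:ugly.MHH.tau-1.sp.seq}).

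Next, I would verify that the diagram
\begin{equation*}
    \begin{tikzcd}
        \Sigma^{1,0}\mathcal{A}(p) \arrow[r, "\sigma"] \arrow[d] & MHH(M\mathbb{Z}/p) \arrow[d] \\
        \Sigma^{1,0}\mathcal{A}(p)[\tau^{-1}] \arrow[r, "\sigma"] & MHH(M\mathbb{Z}/p)[\tau^{-1}]
    \end{tikzcd}
\end{equation*}
commutes, the vertical arrows being the respective $\tau$-localizations. This is immediate from the fact that every structure map entering the construction of $\sigma$ is natural with respect to maps of $E_\infty$-ring spectra. Passing to $\pi_{*,*}$, the classes $\tau_i$ and $\xi_{i+1}$ of \ref{eqn:homotopy.of.MZp.Ap} are sent to the correspondingly-named classes of \ref{eqn:homotopy.MZp.A.tau-1}, while $\tau$ remains $\tau$ but is now a unit. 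The first relation $\tau^{p-1}\sigma_*\tau_{i+1} = (\sigma_*\tau_i)^p$ then transports verbatim from Proposition~\ref{prop:mot.relations}. For the second, that proposition gives $\tau^{p-1}\sigma_*\xi_{i+1}=0$ in $\pi_{*,*}MHH(M\mathbb{Z}/p)[\tau^{-1}]$; multiplying by the unit $\tau^{-(p-1)}$ yields the announced identity $\sigma_*\xi_{i+1}=0$.

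The main, though essentially technical, obstacle I anticipate is checking that each of the ingredients entering $\sigma$, namely the suspension, the action of the simplicial circle producing $MHH$, the smash product used to identify the source with $\Sigma^{1,0}\mathcal{A}(p)[\tau^{-1}]$, and the ring multiplication, commutes with the filtered colimit defining $(-)[\tau^{-1}]$. All of these are left adjoints by presentability of $\mathcal{SH}(F)^{\wedge}$ and of the relevant module categories, so they preserve the sequential colimits in question; but this compatibility has to be spelled out before the naturality square above can be legitimately invoked, and before the source of $\sigma$ can be renamed as $\Sigma^{1,0}\mathcal{A}(p)[\tau^{-1}]$.
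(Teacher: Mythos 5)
Your proposal is correct and follows essentially the same route as the paper, whose entire argument is the assertion that repeating the construction of $\sigma$ with the $\tau$-inverted spectra makes the relations of Proposition \ref{prop:mot.relations} descend to $MHH(M\mathbb{Z}/p)[\tau^{-1}]$. The details you supply (compatibility of the localizing colimit with the smash product and the multiplication, naturality of the localization square, and multiplying $\tau^{p-1}\sigma_*\xi_{i+1}=0$ by the unit $\tau^{-(p-1)}$ to obtain $\sigma_*\xi_{i+1}=0$) are exactly the intended, if unspoken, content of that one-line proof.
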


The map $\sigma$ is of particular interest to us because it is deeply linked to our spectral sequence. In particular, since the spectral sequence $\ref{prop:spectral.seq.tau.-1}$ is first quadrant and the convergence term is concentrated at the origin, for each $q \geq 2$ there is an isomorphism:
\[
    E^q_{q,0,*} \xrightarrow{d^q} E^q_{0,q-1,*}.
\]
This particular kind of differential is called transgressive. We show that these transgressive differentials provide a partial inverse to $\sigma_*$.

\begin{prop}\label{prop:magic.square}
The following diagram commutes:
\begin{equation}\label{eqn:magic.square}
\begin{tikzcd}
E^q_{q,0,*} \arrow[r,"d^q","\simeq"'] \arrow[d, hook] & E^q_{0,q-1,*}\\
\pi_{q,*} MHH(M\mathbb{Z}/p)[\tau^{-1}] & \pi_{q-1,*} \mathcal{A}(p)[\tau^{-1}] \arrow[l,"\sigma_*"] \arrow[u, two heads]
\end{tikzcd}
\end{equation}
\end{prop}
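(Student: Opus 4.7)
The plan is to express both the transgressive differential $d^q$ and the suspension map $\sigma_*$ in terms of boundary maps associated to the Postnikov filtration $\{Q_n\}$ underlying the spectral sequence, and then verify that $\sigma_*$ furnishes a spectrum-level section of the transgression.

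First, I would unpack $d^q$ via the exact couple \ref{eqn:exact.couple}. A class $x \in E^q_{q,0,*}$, represented by $\tilde x \in \pi_{q,*}(Q \wedge_R F_0) = \pi_{q,*}Q_0 \cong \pi_{q,*}MHH(M\mathbb{Z}/p)[\tau^{-1}]$, survives to page $E^q$ precisely when $\tilde x$ admits successive lifts through the tower to some $\tilde x^{(q-2)} \in \pi_{q,*}Q_{q-2}$; then $d^q(x) \in E^q_{0,q-1,*}$ is the image of $\tilde x^{(q-2)}$ under the connecting map $\partial \colon \pi_{q,*}Q_{q-2} \to \pi_{q-1,*}(Q \wedge_R F_{q-1}[q-1]) \cong E^2_{0,q-1,*}$ of the fibre sequence $Q \wedge_R F_{q-1}[q-1] \to Q_{q-1} \to Q_{q-2}$.

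Next, I would show that for $y \in \pi_{q-1,*}\mathcal{A}(p)[\tau^{-1}]$, the class $\sigma_*(y) \in \pi_{q,*}MHH(M\mathbb{Z}/p)[\tau^{-1}]$ lifts through the Postnikov tower to a class in $\pi_{q,*}Q_{q-2}$ whose boundary under $\partial$ recovers $y$ in $E^q_{0,q-1,*}$. The compatibility rests on exploiting the cellular decomposition $\mathcal{A}(p)[\tau^{-1}] \simeq \bigvee_n F_n[n]$ over $M\mathbb{Z}/p[\tau^{-1}]$ coming from \ref{eqn:taun.Y.if.union}, which lets us analyze $\sigma$ on each summand separately. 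Concretely, one produces spectrum-level maps $\Sigma F_{q-1}[q-1] \to Q_{q-2}$ whose composites with $Q_{q-2} \to Q_0 = MHH$ agree with the restriction of $\sigma$ to the corresponding summand, and whose post-composition with the boundary $\partial$ is the canonical equivalence on $F_{q-1}[q]$. With such compatible lifts, the commutativity of the diagram follows from a direct diagram chase.

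The main obstacle is the construction of these spectrum-level lifts, which requires matching the geometric $S^1$-construction defining $\sigma$ (via $\Sigma \mathcal{A}(p)[\tau^{-1}] \to S^1_+ \wedge M\mathbb{Z}/p \wedge M\mathbb{Z}/p \to M\mathbb{Z}/p \wedge MHH \to MHH$) with the abstract Postnikov filtration arising from the $t$-structure on $Mod_{M\mathbb{Z}/p[\tau^{-1}]}$. This becomes tractable because everything is cellular: the decomposition of $\mathcal{A}(p)[\tau^{-1}]$ as a free $M\mathbb{Z}/p[\tau^{-1}]$-module reduces the problem to tracking how $\sigma$ acts on individual cells, where both constructions are explicit and the identification can be read off directly.
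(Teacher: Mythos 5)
Your unpacking of the transgression through the exact couple is correct: a class in $E^q_{q,0,*}$ is represented by $\tilde x\in\pi_{q,*}Q_0\cong\pi_{q,*}MHH(M\mathbb{Z}/p)[\tau^{-1}]$, survival to the $E^q$-page amounts to lifting it up the tower to $\pi_{q,*}Q_{q-2}$, and $d^q$ is then the connecting map of $Q\wedge_R F_{q-1}[q-1]\to Q_{q-1}\to Q_{q-2}$. So the strategy of showing that $\sigma_*(y)$ admits such a lift whose boundary recovers $y$ would indeed prove the square. The gap is that the central construction is asserted, not carried out: you posit spectrum-level maps $F_{q-1}[q]\to Q_{q-2}$ whose composite down the tower to $Q_0$ is $\sigma$ on the corresponding summand and whose composite with the boundary $\partial\colon Q_{q-2}\to Q\wedge_R F_{q-1}[q]$ is the canonical map. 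The existence of such maps for every $q$ is essentially equivalent to the proposition itself (it simultaneously encodes that $\sigma_*(y)$ is a $d^r$-cycle for all $r<q$ and that it transgresses onto $y$), and cellularity does not deliver it: the decomposition of $\mathcal{A}(p)[\tau^{-1}]$ into shifted copies of $M\mathbb{Z}/p[\tau^{-1}]$ determines the pieces $F_n$ additively, but the cells carry no information about $\sigma$, which is defined through the circle action and the multiplication $M\mathbb{Z}/p\wedge MHH(M\mathbb{Z}/p)\to MHH(M\mathbb{Z}/p)$ and is in no sense given cell by cell. What is actually required is a filtered refinement of $\sigma$, i.e.\ a map of towers landing in $\{Q_n\}$, and producing that comparison is the real content of the proof; ``the identification can be read off directly'' is precisely the step that is missing.

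For comparison, the paper supplies exactly this missing ingredient by a global construction rather than a degreewise one. Writing $\mathcal{A}=\mathcal{A}(p)[\tau^{-1}]$, $\mathcal{M}=M\mathbb{Z}/p[\tau^{-1}]$ and $\tilde{\mathcal{A}}=fib(\mathcal{A}\to\mathcal{M})$, one forms the pushout of $\mathcal{A}\to\mathcal{M}$ along itself both in $\mathcal{A}$-modules, giving $\mathcal{M}\oplus_{\mathcal{A}}\mathcal{M}\simeq\mathcal{M}\oplus\Sigma^{1,0}\tilde{\mathcal{A}}$, and in commutative algebras, giving $\mathcal{M}\otimes_{\mathcal{A}}\mathcal{M}=MHH(\mathcal{M})$, together with the canonical comparison map between them. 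Truncating $\mathcal{A}$ with respect to the $t$-structure produces two compatible towers $C_i=\mathcal{A}_{\leq i}\oplus_{\mathcal{A}}\mathcal{M}$ and $D_i=\mathcal{A}_{\leq i}\otimes_{\mathcal{A}}\mathcal{M}$; the spectral sequence of the first is concentrated on the two axes, so its only differentials are forced transgressive isomorphisms, and the map of towers induces a map of spectral sequences whose bottom-row component is $\sigma_*$ and whose vertical-axis component is the canonical surjection. The square \ref{eqn:magic.square} is then just naturality of $d^q$ under this map. If you want to rescue your direct approach, the lifts $F_{q-1}[q]\to Q_{q-2}$ you need should be extracted from precisely such a tower-level comparison (for instance from $\Sigma^{1,0}\tilde{\mathcal{A}}\to\mathcal{M}\oplus_{\mathcal{A}}\mathcal{M}\to\mathcal{M}\otimes_{\mathcal{A}}\mathcal{M}$ after truncation), at which point the argument converges onto the one in the paper.
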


\begin{proof}
    For this proof, we use the substitute symbols $\mathcal{A}=\mathcal{A}(p)[\tau^{-1}]$ and $\mathcal{M}=M\mathbb{Z}/p[\tau^{-1}]$ to improve readability.

    Given the a map $\mathcal{A} \to \mathcal{M}$ of connective commutative motivic ring spectra over $\mathcal{M}$ we form the pushouts:
    \[
    \begin{tikzcd}
    \mathcal{A} \arrow[r] \arrow[d] & \mathcal{M} \arrow[d]\\
    \mathcal{M} \arrow[r] & \mathcal{M}\oplus_{\mathcal{A}} \mathcal{M}
    \end{tikzcd}
    \qquad
    \begin{tikzcd}
    \mathcal{A} \arrow[r] \arrow[d] & \mathcal{M} \arrow[d]\\
    \mathcal{M} \arrow[r] & \mathcal{M}\otimes_{\mathcal{A}} \mathcal{M}
    \end{tikzcd}
    \]
        respectively in in $Mod_{\mathcal{A}}$ and in $ CAlg(Mod_{\mathcal{A}})\cong CAlg(SH(S))_{\mathcal{A}/}$ respectively. Via the forgetful functor, we can see $\mathcal{M}\otimes_{\mathcal{A}} \mathcal{M}$ as an element of $Mod_{\mathcal{A}}$, hence we deduce natural maps:
    \[
    \begin{tikzcd}
    \mathcal{A} \arrow[r] \arrow[d] & \mathcal{M} \arrow[d] \arrow[rdd]&\\
    \mathcal{M} \arrow[r] \arrow[rrd] & \mathcal{M}\oplus_{\mathcal{A}} \mathcal{M} \arrow[rd]&\\
    &&  \mathcal{M} \otimes_{\mathcal{A}} \mathcal{M}
    \end{tikzcd}
    \]
    
    Let $\tilde{\mathcal{A}}=fib(\mathcal{A} \to \mathcal{M})$. Given the shape of $\mathcal{A}$ and properties of our $t$-structure, from the point of view of homotopy groups $\tilde{\mathcal{A}}$ is the same as $\mathcal{A}$, but in degree 0, where it is trivial. 
    Consider again the pushout:
    \[
    \begin{tikzcd}
    \mathcal{A} \arrow[r] \arrow[d] & \mathcal{M} \arrow[d]\\
    \mathcal{M} \arrow[r] & \mathcal{M} \oplus_{\mathcal{A}} \mathcal{M}
    \end{tikzcd}
    \]
    As cofibre sequences can be expressed as certain pushouts, the second column of the above pushout diagram will present the same cofibre as the first:
    \[
    \begin{tikzcd}
    \mathcal{A} \arrow[r] \arrow[d] & \mathcal{M} \arrow[d]\\
    \mathcal{M} \arrow[r] \arrow[d] & \mathcal{M} \oplus_{\mathcal{A}} \mathcal{M}\arrow[d] \\
    \Sigma^{1,0} \tilde{\mathcal{A}} & \Sigma^{1,0} \tilde{\mathcal{A}}
    \end{tikzcd}
    \]
    
    Given the structure of $\mathcal{A}$ (hence of $\tilde{\mathcal{A}}$) as an $\mathcal{M}$-module, we can conclude an equivalence $\mathcal{M} \oplus_{\mathcal{A}} \mathcal{M} \cong \mathcal{M} \oplus \Sigma^{1,0} \tilde{\mathcal{A}}$.
    
    Truncate now $\mathcal{A}$ according to the $t$-structure:
    \[
    \mathcal{A} \rightarrow \ldots \rightarrow \mathcal{A}_{\leq 2} \rightarrow \mathcal{A}_{\leq 1} \rightarrow \mathcal{A}_{\leq 0} \cong \mathcal{M}
    \]
    Define $C_i := \mathcal{A}_{\leq i} \oplus_{\mathcal{A}} \mathcal{M}$, $D_i:= \mathcal{A}_{\leq i} \otimes_{\mathcal{A}} \mathcal{M}$. Then we have a commutative diagram:
    \[
    \begin{adjustbox}{max width=\textwidth, center}
    \begin{tikzcd}
    \mathcal{A} \arrow[r] \arrow[d] & \mathcal{M} \arrow[r, "id"] \arrow[d] & \mathcal{M} \arrow[d]\\
    \vdots \arrow[d]&\vdots \arrow[d]& \vdots\arrow[d]\\
    \mathcal{A}_{\leq 2} \arrow[r] \arrow[d] & C_2 \arrow[r] \arrow[d] & D_2 \arrow[d]\\
    \mathcal{A}_{\leq 1} \arrow[r] \arrow[d] & C_1 \arrow[r] \arrow[d] & D_1 \arrow[d]\\
    \mathcal{A}_{\leq 0} \cong \mathcal{M} \arrow[r]  & C_0 \cong \mathcal{M} \oplus \Sigma^{1,0}\tilde{\mathcal{A}} \arrow[r]  & D_0\cong \mathcal{M} \otimes_{\mathcal{A}} \mathcal{M}
    \end{tikzcd}
    \end{adjustbox}
    \]
    Each of these towers gives rise to a spectral sequence. In particular, the $D_j$ give rise to the spectral sequence we introduced before:
    \[\begin{adjustbox}{max width=\textwidth, center}
    \parbox{\linewidth}{
    \begin{align*}
        E^2_{s,t,*}=& \pi_{s+t,*}(fib(D_t \to D_{t-1})) \\
        &\cong \pi_{s,*}(MHH(\mathcal{M})) \otimes_{\pi_{0,*}(\mathcal{M})} \pi_{t,*}(\mathcal{A}) \Rightarrow \pi_{s+t,*}(\mathcal{M}).
    \end{align*}}
    \end{adjustbox}\]
    
    The $C_j$ give rise instead to the following spectral sequence:
    \[\begin{adjustbox}{max width=\textwidth, center}
    \parbox{\linewidth}{\begin{align*}
        E'^2_{s,t,*}=& \pi_{s+t,*}(fib(C_t \to C_{t-1})) \\
        &=\pi_{s+t,*}(fib\left((\mathcal{A}_{\leq t}\oplus_{\mathcal{A}} \mathcal{M}) \to (\mathcal{A}_{\leq t-1} \oplus_{\mathcal{A}} \mathcal{M})\right))\\
        &\cong 
        \begin{cases}
            \pi_{s,*}(\mathcal{M}\oplus \Sigma^{1,0} \tilde{\mathcal{A}}) & t=0\\
            \pi_{s,*}(F_t) & t>0
        \end{cases}  
    \end{align*}}
    \end{adjustbox}\]
    where $\Sigma^{t,0}F_t=fib(\mathcal{A}_{\leq t} \to \mathcal{A}_{\leq t-1})$; recall that under our assumptions 
    \[\pi_{s,*}(F_t) \cong 
    \begin{cases}
        \pi_{s,*} \mathcal{A} &\text{ for } s=t\\
        0 & \text{ otherwise.}
    \end{cases}\]
    Notice that also this spectral sequence converges to the ring: $E'^2_{s,t,*} \rightarrow \pi_{s+t,*}(\mathcal{M})$.
    
    The sparsity of non-zero modules in the $E^2$-page of this second spectral sequence forces its behaviour to be very easy. Notice that it is first quadrant, and we have non-zero terms only along the axes. Since the convergence term is concentrated in the origin, we must have only one non-trivial differential $d^i:E'^i_{i,0,*} \to E'^i_{0,i-1,*}$ for each $i \geq 2$, and it must be an isomorphism. 
    
    Now, the map between the filtrations $C_i$ and $D_i$ produces a map between the spectral sequences. On the $E^2$ page, this looks like:
    \begin{figure}[H]
    \centering
    \adjustbox{max width= \textwidth, center}{\begin{tikzpicture}
    \draw[->, gray] (0,0)--(0,4.5);
    \draw[->, gray] (0,0)--(5.5,0);
    
    \draw[->, gray] (8,0)--(8,4.5);
    \draw[->, gray] (8,0)--(13.5,0);
    
    \draw[step=1 ,lightgray,very thin] (0,0) grid (5.5,4.5);
    \draw[step=1 ,lightgray,very thin] (8,0) grid (13.5,4.5);
    
    \foreach \y in {0,...,4}
    {
    \filldraw (0,\y) circle (1pt);
    }
    \foreach \x in {2,...,5}
    {
    \filldraw (\x,0) circle (1pt);
    }
    
    \foreach \x in {0,2,3,4,5}
    \foreach \y in {0,...,4}
    {
    \filldraw (\x+8,\y) circle (1pt);
    }
    
    \node foreach \y in {0,...,4} at (0, \y) [left] {\small $\pi_{\y,*}\mathcal{A}$};
    \node foreach \y in {0,...,4} at (8, \y) [left] {\small $\pi_{\y,*}\mathcal{A}$};
    \node foreach \x in {1,...,4} at (\x+1, 0) [below, rotate=90, anchor=east] {\small $\pi_{\x,*}\mathcal{A}$};
    \node foreach \x in {1,...,4} at (\x+9, 0) [below, rotate=90, anchor=east] {\small $\pi_{\x,*}MHH(\mathcal{M})$};

    \draw[->, line width=0.75pt, blue] (2,0)--(0,1) node[blue, below, midway]{\small $id$};

    \draw[->, line width=0.75pt, blue] (10,0)--(8,1) node[blue, below, midway]{\small $d^2$};

    \path (0,0) edge[->, line width=0.75pt, red, bend right=22] (8,0);
    \path (2,0) edge[->, line width=0.75pt, red, bend right=22] (10,0);
    \path (3,0) edge[->, line width=0.75pt, red, bend right=22] node[red, below, midway]{\small $\sigma_*$} (11,0);
    \path (3,0) edge[->, line width=0.75pt, red, bend right=22] (11,0);
    \path (4,0) edge[->, line width=0.75pt, red, bend right=22] (12,0);
    \path (5,0) edge[->, line width=0.75pt, red, bend right=22] (13,0);

    \path (0,1) edge[->, line width=0.75pt, teal, bend left=22, looseness=0.8] (8,1);
    \path (0,2) edge[->, line width=0.75pt, teal, bend left=22, looseness=0.8] (8,2);
    \path (0,3) edge[->, line width=0.75pt, teal, bend left=22, looseness=0.8] (8,3);
    \path (0,4) edge[->, line width=0.75pt, teal, bend left=22, looseness=0.8] node[teal, above , midway]{\small $id$}(8,4);
\end{tikzpicture}}
    \caption{The map of spectral sequences at the $E^2$ page}
\end{figure}
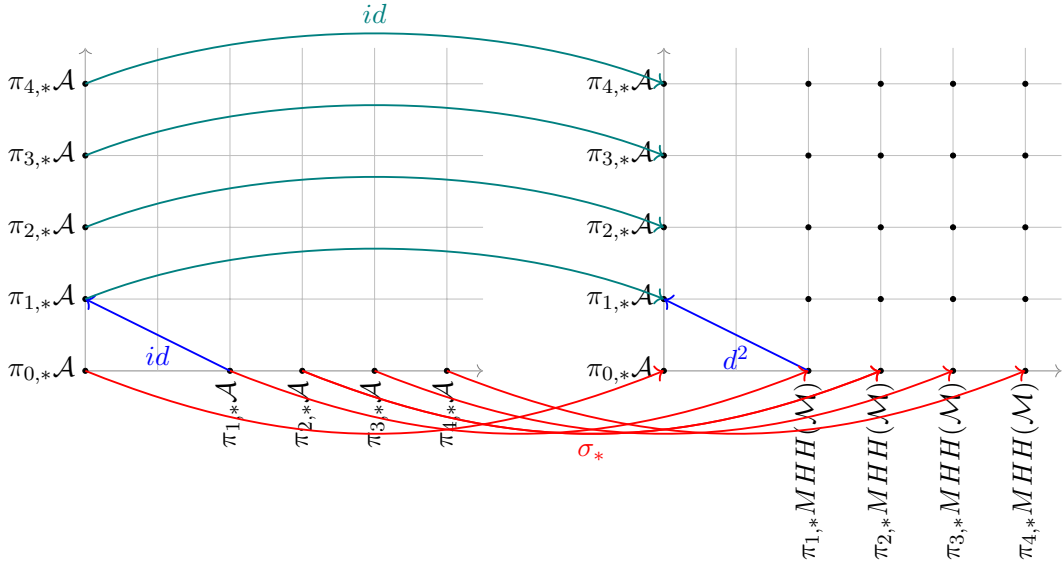

    Proceeding to the $E^q$-page, we encounter a commutative diagram:
    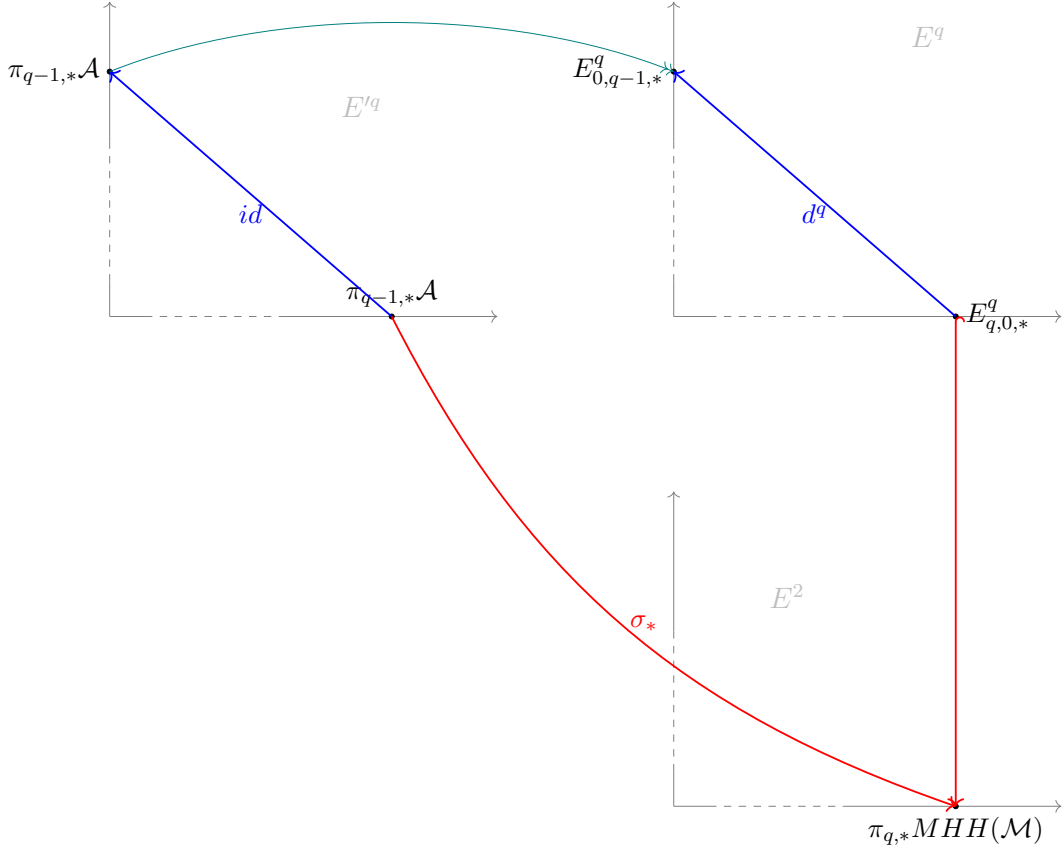
\begin{figure}[H]
    \centering
    \adjustbox{max width= \textwidth}{\begin{tikzpicture}
    \draw[gray] (0,0)--(0,0.5);
    \draw[dashed,gray] (0,0.5)--(0,2.5);
    \draw[->, gray] (0,2.5)--(0,4.5);
    \draw[ gray] (0,0)--(0.5,0);
    \draw[dashed,gray] (0.5,0)--(2.5,0);
    \draw[->, gray] (2.5,0)--(5.5,0);

    \draw[gray] (0+8,-7)--(0+8,-6.5);
    \draw[dashed,gray] (0+8,0.5-7)--(0+8,2.5-7);
    \draw[->, gray] (0+8,2.5-7)--(0+8,4.5-7);
    \draw[ gray] (0+8,0-7)--(0.5+8,0-7);
    \draw[dashed,gray] (0.5+8,0-7)--(2.5+8,0-7);
    \draw[->, gray] (2.5+8,0-7)--(5.5+8,0-7);
    
    \draw[gray] (8,0)--(8,0.5);
    \draw[dashed,gray] (8,0.5)--(8,2.5);
    \draw[->, gray] (8,2.5)--(8,4.5);
    \draw[ gray] (8,0)--(8.5,0);
    \draw[dashed,gray] (8.5,0)--(10.5,0);
    \draw[->, gray] (10.5,0)--(13.5,0);

    \filldraw (0,3.5) circle (1pt);
    \filldraw (4,0) circle (1pt);
    
    \filldraw (8,3.5) circle (1pt);
    \filldraw (12,0) circle (1pt);

    \filldraw (4+8,-7) circle (1pt);
    
    \node at (0,3.5) [left] {\small $\pi_{q-1,*}\mathcal{A}$};
    \node at (8,3.5) [left] {\small $E^q_{0,q-1,*}$};
    \node at (4,0) [above] {\small $\pi_{q-1,*}\mathcal{A}$};
    \node at (4+8,-7) [below, anchor=north] {\small $\pi_{q,*}MHH(\mathcal{M})$};
    \node at (12,0) [below, anchor=west] {\small $E^q_{q,0,*}$};

    \node at (4,3) [anchor=east, lightgray] {$E'^q$};
    \node at (12,4) [anchor=east, lightgray] {$E^q$};
    \node at (2+8,-4) [anchor=east, lightgray] {$E^2$};

    \draw[->, line width=0.75pt, blue] (4,0)--(0,3.5) node[blue, below, midway]{\small $id$};
    \draw[->, line width=0.75pt, blue] (12,0)--(8,3.5) node[blue, below, midway]{\small $d^q$};
    \path (4,0) edge[->, line width=0.75pt, red, bend right=22] node[red, right, midway]{\small $\sigma_*$} (4+8,-7);
    \path (0,3.5) edge[->>, teal, bend left=22, looseness=0.8] (8,3.5);
    \path (12,0) edge[right hook->, line width=0.75pt, red] (4+8,-7);
    
\end{tikzpicture}}
    \caption{The map of spectral sequences at the $E^q$ pages.}

\end{figure}
    that corresponds to the one in \ref{eqn:magic.square} we were looking for.
\end{proof}

This result, as we are going to see, will provide some help in resolving otherwise ambiguous situations. As the homotopy groups of the $\tau$ inverted dual motivic Steenrod algebra:
\[
    \pi_{*,*}\mathcal{A}(p)[\tau^{-1}]\cong 
    \begin{cases}
        \mathbb{F}_2[\tau^{\pm 1}, \tau_i]_{i \geq 0} & \text{ for } p=2\\
            \mathbb{F}_p[\tau^{\pm 1}, \xi_i, \tau_i]_{i \geq 0}/(\tau_i^2) & \text{ for } p=\text{odd}    
    \end{cases}
\]
have a substantially different structure for $p=2$ or $p$ odd, we will distinguish between these two cases.

\subsubsection{\texorpdfstring{$p=2$}{p=2}}\label{subsubs:p=2}

\begin{figure}[H]
\begin{center}
\begin{tikzpicture}[scale=0.6,line width=1pt]

\clip(-3,-1.5) rectangle (16.5,15.9);

\draw[step=2cm ,gray,very thin] (0,0) grid (16.5,15.9);

\draw[black, line width=0.8pt,->](-1.5,0)--(16.5,0);
\draw[black, line width=0.8pt,->] (0,-1.5)--(0,15.9);

\node[label={225:{\scriptsize $1$}}] at ( 0, 0){};
\node[label={180:{\scriptsize $\tau_0$}}] at ( 0, 2){};
\node[label={180:{\scriptsize $\tau_0^2$}}] at ( 0, 4){};
\node[label={180:{\scriptsize $\tau_0^3$}}] at ( 0, 6){};
\node[label={180:{\scriptsize $\tau_1$}}] at ( 0, 6.5){};
\node[label={180:{\scriptsize $\tau_0^4$}}] at ( 0, 8){};
\node[label={180:{\scriptsize $\tau_0\tau_1$}}] at ( 0, 8.5){};
\node[label={180:{\scriptsize $\tau_0^5$}}] at ( 0, 10){};
\node[label={180:{\scriptsize $\tau_0^2\tau_1$}}] at ( 0, 10.5){};
\node[label={180:{\scriptsize $\tau_0^6$}}] at ( 0, 12){};
\node[label={180:{\scriptsize $\tau_0^3\tau_1$}}] at ( 0, 12.5){};
\node[label={180:{\scriptsize $\tau_1^2$}}] at ( 0, 13){};
\node[label={180:{\scriptsize $\tau_0^7$}}] at ( 0, 14){};
\node[label={180:{\scriptsize $\tau_0^4\tau_1$}}] at ( 0, 14.5){};
\node[label={180:{\scriptsize $\tau_0\tau_1^2$}}] at ( 0, 15){};
\node[label={180:{\scriptsize $\tau_2$}}] at ( 0, 15.5){};

\node[label={270:{\scriptsize $\mu_0$}}] at ( 4, 0){};

\node[label={270:{\scriptsize $\mu^2_0$}}] at ( 8, 0){};

\node[label={270:{\scriptsize $\mu^3_0$}}] at ( 12, 0){};

\node[label={270:{\scriptsize $\mu^4_0$}}] at ( 16, 0){};

{\draw[fill]
(0, 0) circle (3pt)
(0, 2) circle (3pt)
(0, 4) circle (3pt)
(0, 6) circle (3pt)
(0, 6.5) circle (3pt)
(0, 8)  circle (3pt)
(0, 8.5) circle (3pt)
(0, 10) circle (3pt)
(0, 10.5) circle (3pt)
(0, 12) circle (3pt)
(0, 12.5) circle (3pt)
(0, 13) circle (3pt)
(0, 14) circle (3pt)
(0, 14.5) circle (3pt)
(0, 15) circle (3pt)
(0, 15.5) circle (3pt)
;}

{\draw[fill]
(4, 0) circle (3pt)
(4, 2) circle (3pt)
(4, 4) circle (3pt)
(4, 6) circle (3pt)
(4, 6.5) circle (3pt)
(4, 8)  circle (3pt)
(4, 8.5) circle (3pt)
(4, 10) circle (3pt)
(4, 10.5) circle (3pt)
(4, 12) circle (3pt)
(4, 12.5) circle (3pt)
(4, 13) circle (3pt)
(4, 14) circle (3pt)
(4, 14.5) circle (3pt)
(4, 15) circle (3pt)
(4, 15.5) circle (3pt)
;}

{\draw[fill]
(8, 0) circle (3pt)
(8, 2) circle (3pt)
(8, 4) circle (3pt)
(8, 6) circle (3pt)
(8, 6.5) circle (3pt)
(8, 8)  circle (3pt)
(8, 8.5) circle (3pt)
(8, 10) circle (3pt)
(8, 10.5) circle (3pt)
(8, 12) circle (3pt)
(8, 12.5) circle (3pt)
(8, 13) circle (3pt)
(8, 14) circle (3pt)
(8, 14.5) circle (3pt)
(8, 15) circle (3pt)
(8, 15.5) circle (3pt)
;}

{\draw[fill]
(12, 0) circle (3pt)
(12, 2) circle (3pt)
(12, 4) circle (3pt)
(12, 6) circle (3pt)
(12, 6.5) circle (3pt)
(12, 8)  circle (3pt)
(12, 8.5) circle (3pt)
(12, 10) circle (3pt)
(12, 10.5) circle (3pt)
(12, 12) circle (3pt)
(12, 12.5) circle (3pt)
(12, 13) circle (3pt)
(12, 14) circle (3pt)
(12, 14.5) circle (3pt)
(12, 15) circle (3pt)
(12, 15.5) circle (3pt)
;}

{\draw[fill]
(16, 0) circle (3pt)
(16, 2) circle (3pt)
(16, 4) circle (3pt)
(16, 6) circle (3pt)
(16, 6.5) circle (3pt)
(16, 8)  circle (3pt)
(16, 8.5) circle (3pt)
(16, 10) circle (3pt)
(16, 10.5) circle (3pt)
(16, 12) circle (3pt)
(16, 12.5) circle (3pt)
(16, 13) circle (3pt)
(16, 14) circle (3pt)
(16, 14.5) circle (3pt)
(16, 15) circle (3pt)
(16, 15.5) circle (3pt)
;}

\draw[colarr2,line width=0.75pt,->] (4, 0) ->(0, 2);
\draw[colarr2,line width=0.75pt,->] (4, 2) --(0,4);
\draw[colarr2,line width=0.75pt,->] (4, 4) --(0,6);
\draw[colarr2,line width=0.75pt,->] (4, 6) --(0,8);
\draw[colarr2,line width=0.75pt,->] (4, 6.5) --(0,8.5);
\draw[colarr2,line width=0.75pt,->] (4, 8)  --(0,10);
\draw[colarr2,line width=0.75pt,->] (4, 8.5) --(0,10.5);
\draw[colarr2,line width=0.75pt,->] (4, 10) --(0,12);
\draw[colarr2,line width=0.75pt,->] (4, 10.5)--(0,12.5);
\draw[colarr2,line width=0.75pt,->] (4, 12)--(0,14); 
\draw[colarr2,line width=0.75pt,->] (4, 12.5)--(0,14.5); 
\draw[colarr2,line width=0.75pt,->] (4, 13)--(0,15); 
\draw[colarr2,line width=0.75pt,->] (4, 14)--(0,16); 
\draw[colarr2,line width=0.75pt,->] (4, 14.5)--(0,16.5); 
\draw[colarr2,line width=0.75pt,->] (4, 15)--(0,17); 
\draw[colarr2,line width=0.75pt,->] (4, 15.5)--(0,17.5);

\draw[colarr2,line width=0.75pt,->] (12, 0) --(8, 2);
\draw[colarr2,line width=0.75pt,->] (12, 2) --(8,4);
\draw[colarr2,line width=0.75pt,->] (12, 4) --(8,6);
\draw[colarr2,line width=0.75pt,->] (12, 6) --(8,8);
\draw[colarr2,line width=0.75pt,->] (12, 6.5) --(8,8.5);
\draw[colarr2,line width=0.75pt,->] (12, 8)  --(8,10);
\draw[colarr2,line width=0.75pt,->] (12, 8.5) --(8,10.5);
\draw[colarr2,line width=0.75pt,->] (12, 10) --(8,12);
\draw[colarr2,line width=0.75pt,->] (12, 10.5)--(8,12.5);
\draw[colarr2,line width=0.75pt,->] (12, 12)--(8,14); 
\draw[colarr2,line width=0.75pt,->] (12, 12.5)--(8,14.5); 
\draw[colarr2,line width=0.75pt,->] (12, 13)--(8,15); 
\draw[colarr2,line width=0.75pt,->] (12, 14)--(8,16); 
\draw[colarr2,line width=0.75pt,->] (12, 14.5)--(8,16.5); 
\draw[colarr2,line width=0.75pt,->] (12, 15)--(8,17); 
\draw[colarr2,line width=0.75pt,->] (12, 15.5)--(8,17.5); 

\draw[colarr2,line width=0.75pt,->] (20, 0) --(16, 2);
\draw[colarr2,line width=0.75pt,->] (20, 2) --(16,4);
\draw[colarr2,line width=0.75pt,->] (20, 4) --(16,6);
\draw[colarr2,line width=0.75pt,->] (20, 6) --(16,8);
\draw[colarr2,line width=0.75pt,->] (20, 6.5) --(16,8.5);
\draw[colarr2,line width=0.75pt,->] (20, 8)  --(16,10);
\draw[colarr2,line width=0.75pt,->] (20, 8.5) --(16,10.5);
\draw[colarr2,line width=0.75pt,->] (20, 10) --(16,12);
\draw[colarr2,line width=0.75pt,->] (20, 10.5)--(16,12.5);
\draw[colarr2,line width=0.75pt,->] (20, 12)--(16,14); 
\draw[colarr2,line width=0.75pt,->] (20, 12.5)--(16,14.5); 
\draw[colarr2,line width=0.75pt,->] (20, 13)--(16,15); 
\draw[colarr2,line width=0.75pt,->] (20, 14)--(16,16); 
\draw[colarr2,line width=0.75pt,->] (20, 14.5)--(16,16.5); 
\draw[colarr2,line width=0.75pt,->] (20, 15)--(16,17); 
\draw[colarr2,line width=0.75pt,->] (20, 15.5)--(16,17.5);

\draw[colarr4,line width=0.75pt,->] (8, 0) --(0, 6.5);
\draw[colarr4,line width=0.75pt,->] (8,6.5) -- (0,13);
\draw[colarr4,line width=0.75pt,->] (8,13) -- (0,19); 
\draw[colarr4,line width=0.75pt,->] (24, 0) --(16, 6.5);
\draw[colarr4,line width=0.75pt,->] (24,6.5) -- (16,13);

\draw[colarr8,line width=0.75pt,->] (16,0) -- (0,15.5);

\end{tikzpicture}

\caption{The spectral sequence for $MHH(M\mathbb{Z}/2)[\tau^{-1}]$ in low degrees.  Each point represents the generator of a module isomorphic to $\mathbb{F}_2[\tau^{\pm 1}]$.}

\end{center}

\end{figure}

We begin by analysing the even-primary case: 

\[
\begin{adjustbox}{max width=\textwidth, center}
$\displaystyle
E^2_{s,t,*}=\pi_{s,*}MHH(M\mathbb{Z}/2)[\tau^{-1}] \otimes_{\mathbb{F}_2[\tau^{\pm 1}]} \pi_{t,*}\mathcal{A}(2)[\tau^{-1}] \Rightarrow \pi_{s+t,*} M\mathbb{Z}/2[\tau^{-1}].
$    
\end{adjustbox}
\]

\begin{rmk}\label{rmk:key.properties.spec.seq}
    We recall some key features of our spectral sequence, of which we will make abundant use.
\begin{itemize}
    \item The $E^{\infty}$ term is non zero only in position $(0,0,*)$.
    \item The spectral sequence is first quadrant. This implies that the $d^q$ differential acts trivially one the modules $E^q_{s,t,*}$ with $s<q$.
    \item In particular, combining the two above, the modules $E^q_{s,t,*}$ with $s<q$, $t<q-1$ and $(s,t) \neq (0,0)$ must be trivial.
    \item The spectral sequence is multiplicative, in the sense that the differentials satisfy a Leibniz rule, which, given our truncation, is weighted on the degree of the classes involved.
\end{itemize}
\end{rmk}

Before carrying out a general induction computation, we analyse explicitly what happens in low degrees to grasp an idea of the behaviour of the spectral sequence and to provide a base step. Aided by the above properties, our study will proceed from the origin towards the right, acquiring at the same time information on elements of a higher degree in $\pi_{*,*}MHH(M\mathbb{Z}/2)[\tau^{-1}]$ and on the behaviour of the spectral sequence in higher pages.

In position $E^2_{0,0,*}$ we have a module isomorphic to $\mathbb{F}_2[\tau^{\pm 1}]$, generated by 1. By the above remarks, it is a permanent cycle, in accordance with the convergence assumption.

As the infinity term of the spectral sequence does not have anything in degree $(1,0,*)$, we can conclude $\pi_{1,*}MHH(M\mathbb{Z}/2)[\tau^{-1}]=0$: any element here would produce a permanent cycle. Given the multiplicative decomposition of the $E^2$-page, we conclude that the whole column $E^2_{1,*,*}\cong 0$ is trivial.

Next, the presence of $\tau_0$ in degree 1 in the dual motivic Steenrod algebra implies that there exists an element $\mu_0 \in \pi_{2,*}MHH(M\mathbb{Z}/2)[\tau^{-1}]$ such that:
\begin{equation*}
d^2(\mu_0)=\tau_0.
\end{equation*}
This extends by multiplicativity to an isomorphism of modules:
\[
    \mathbb{F}_2[\tau^{\pm 1}]\{ \mu_0 \} \xrightarrow{d^2} \mathbb{F}_2[\tau^{\pm 1}]\{ \tau_0 \} \cong E^2_{0,1,*}
\]
As we cannot produce permanent cycles, this has to exhaust all the classes in $E^2_{2,0,*}$; we then conclude the isomorphism:
\begin{equation*}
\pi_{2,*}MHH(M\mathbb{Z}/2)[\tau^{-1}] \cong \mathbb{F}_2[\tau^{\pm 1}]\{ \mu_0 \}.
\end{equation*}

By the decomposition of the $E^2$ page, all elements in $E^2_{2,*,*}$ are of the form $\mu_0 \cdot \alpha$, with $\alpha \in \pi_{*,*}\mathcal{A}(2)[\tau^{\pm 1}]$; as $d^2(\mu_0 \cdot \alpha)=\tau_0 \alpha \neq 0$ for $\alpha \neq 0$, we conclude that $E^k_{2,*,*}=0$ for $k\geq 3$.
On the other hand, any element $x \in E^2_{0,*,*}$ multiple of $\tau_0$ is hit by a 2-differential originating in $\mu_0x/\tau_0$; in other words, all and only the multiples of $\tau_0$ vanish from the zeroth column after the $E^2$-page; more precisely:
\begin{equation} \label{eq:E3_{0,*,*},p=2}
    E^3{0,*,*} \cong E^2{0,*,*}/\tau_0 E^2{0,*,*} \cong \pi_{*,*} \mathcal{A}(2)[\tau^{-1}]/\langle \tau_0 \rangle.
\end{equation}

Observe that the $d^2$ differential has to be trivial on $E^2_{3,0,*}$, as the first column $E^2_{1,*,*}$ is null. As moreover we have that $E^3_{(0,2,*)}=0$, we can conclude that 
\[
     \pi_{3,*}MHH(M\mathbb{Z}/2)[\tau^{-1}] \cong E^3_{3,0,*} \cong 0,
\]
similarly to what happened in degree 1. Because of the decomposition of the $E^2$-page, whole the third column is empty.

In degree four, we encounter the square $\mu_0^2$. It has a trivial $d^2$-differential for characteristic reason and a trivial $d^3$-differential for $E^3_{1,2,*}\cong 0$. On the other hand, we need an element $\mu_1 \in E^4_{4,0,1}$ with a $d^4$-differential $d^4(\mu_1)=\tau_1$. Thanks to corollary \ref{cor:power.operations}, we can identify $\mu_1=\tau^{-1}\mu_0$; as there is an isomorphism of $\mathbb{F}_2[\tau^{\pm 1}]$-modules:
\[
    \mathbb{F}_2[\tau^{\pm 1}]\{ \mu_0^2 \} \xrightarrow{d^4} \mathbb{F}_2[\tau^{\pm 1}]\{ \tau_1 \} \cong E^4_{0,3,*}
\]
we conclude that $E^4_{4,0,*} \cong \mathbb{F}_2[\tau^{\pm 1}]\{ \mu_0^2 \}$, that implies:
\[
    \pi_{4,*}MHH(M\mathbb{Z}/2)[\tau^{-1}] \cong \mathbb{F}_2[\tau^{\pm 1}]\{ \mu_0^2 \}.
\]

Again, in degree 5 one concludes, as in degree 3: $\pi_{5,*}MHH(M\mathbb{Z}/2)[\tau^{-1}] \cong0$.

In degree 6, we encounter the element $\mu_0^3$, that has non-trivial $d^2$-differential 
\begin{equation*}
d^2(\mu_0^3)=d^2(\mu_0 \cdot \mu_0^2) = d^2(\mu_0 ) \mu_0^2+ d^2(\mu_0^2) \mu_0= \tau_0\mu_0^2.
\end{equation*}
Similarly to what happens in the second column, all elements in $E^2_{6,*,*}$ have a non-trivial $d^2$-differential, landing in the fourth column $E^2_{4,*,*}$. These hit all elements multiple of $\tau_0$ there, removing them from the $E^3$ page. Also, from what we described above, we have that:
\begin{equation*}
E^3_{3,2,*}\cong E^4_{2,3,*} \cong E^5_{1,4,*} \cong E^6_{0,5,*} \cong 0
\end{equation*}
Hence:
\begin{equation*}
E^2_{6,*,*}\cong \pi_{4,*}MHH(M\mathbb{Z}/2)[\tau^{-1}] \cong \mathbb{F}_2[\tau^{\pm 1}]\{ \mu_0^3 \}.
\end{equation*}

Moving to the seventh column, we see that the only possible non-trivial differential for an element $\alpha \in E^2_{7,0,*}$ here would be $\alpha \xrightarrow{d^7} \tau_1^2$; however, this would require $\tau_1^2$ to survive up to the $E^7$ page. This is incompatible with the $d^4$ differential arising from the Leibniz rule $\mu_0^2\tau_1 \xrightarrow{d^4} \tau_1^2$, that makes $\tau_1^2$ vanish after the $E^4$ page; so we can conclude that $E^2_{7,0,*} \cong 0$, and hence $E^2_{7,*,*}\cong 0$ 

Now, the elements in the fourth column that survive to the $E^3$-page (actually, to the $E^4$ page) come with a $d^4$ differential landing in the zeroth column given by the Leibniz rule, namely:
\begin{equation*}
d^4(\mu_0^2\beta)=\tau_1\beta
\end{equation*}
In fact, this $\tau_1 \beta \in E^4_{0,*,*}$ is nonzero for $0 \neq \beta \in E^4_{0,*,*}$, as from the description of $E^4_{0,*,*}$ (there are no $d^3$ differentials hitting the zeroth column, so $E^4_{0,*,*}\cong E^3_{0,*,*}$) it is just a polynomial algebra. As there are no differentials involving the fourth column that are shorter than a $d^4$ but the $d^2$ differentials from the sixth column we already took into the account, we can conclude the necessity and the non-triviality of these $d^4$ differentials. This mechanism involves all the non-zero elements of $E^4_{4,*,*}$, so we conclude that $E^5_{4,*,*}\cong 0$. Observe incidentally that any element of the form $\tau_1\beta \in E^4_{0,*,*}$ is hit by one of these $d^4$ differentials, so there are no elements containing $\tau_1$ in the zeroth column in pages higher than $E^5$.

Following this pattern, we obtain the following. 

\begin{prop}\label{prop:pi.MHH(MZ/2)[tau-1]}
    $\pi_{*,*}MHH(M\mathbb{Z}/2)[\tau^{-1}]$ is a polynomial ring in a generator $\mu_0$ of degree $(2,0)$; the differential behaviour is determined by the largest power of 2 dividing the exponent: 
    \begin{equation}\label{eq:diff.MHH.p=2}
        \mu_0^{n2^i} \xrightarrow{d_{2^{i+i}}} \mu_0^{(n-1)2^i}\tau_{i} \text{, with } n \text{ odd.}
    \end{equation}
\end{prop}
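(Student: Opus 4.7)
The plan is strong induction on the first filtration index $s$, taking as base case the explicit treatment of $s \leq 7$ already carried out above. Assume at stage $s$ that $\pi_{s',*}MHH(M\mathbb{Z}/2)[\tau^{-1}]$ and all differentials originating in columns $s' < s$ have been identified in agreement with the proposition. By the multiplicative decomposition
\[
E^2_{s,t,*} \cong \pi_{s,*}MHH(M\mathbb{Z}/2)[\tau^{-1}] \otimes_{\mathbb{F}_2[\tau^{\pm 1}]} \pi_{t,*}\mathcal{A}(2)[\tau^{-1}],
\]
determining the column $s$ reduces to pinning down $\pi_{s,*}MHH(M\mathbb{Z}/2)[\tau^{-1}]$ together with the differentials leaving $E^{*}_{s,0,*}$.

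For odd $s$, the argument used at $s = 3,5,7$ in the base case generalises: any nonzero class in $\pi_{s,*}MHH(M\mathbb{Z}/2)[\tau^{-1}]$ would, after accounting for the Leibniz-propagated differentials from lower columns, produce a permanent cycle in $E^{\infty}$ outside the origin, contradicting convergence. For $s = 2n$ even, Leibniz applied to the inductively known lower-degree differentials determines every differential leaving column $s$ except, when $n = 2^i$ is a pure power of $2$, the principal transgression $d^{2^{i+1}}$ on the top class $\mu_0^{2^i}$. To fix this transgression I would combine the magic square of \ref{prop:magic.square} with the power operation $\tau\,\sigma_*\tau_{i+1} = (\sigma_*\tau_i)^2$ of \ref{cor:power.operations}. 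Weight matching in $\pi_{2^{i+1},*}MHH(M\mathbb{Z}/2)[\tau^{-1}] \cong \mathbb{F}_2[\tau^{\pm 1}]\{\mu_0^{2^i}\}$ forces $\sigma_*\tau_i = c_i\,\tau^{1-2^i}\mu_0^{2^i}$ with $c_i \in \mathbb{F}_2$; the power-operation identity yields the recursion $c_{i+1} = c_i^2 = c_i$; and the base case $\sigma_*\tau_0 = \mu_0$ gives $c_0 = 1$, so $c_i = 1$ for every $i$. Via the transgression identification of \ref{prop:magic.square} this is exactly the assertion $d^{2^{i+1}}(\mu_0^{2^i}) = \tau_i$. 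Leibniz then propagates this to $d^{2^{i+1}}(\mu_0^{n\cdot 2^i}) = \mu_0^{(n-1)\cdot 2^i}\tau_i$ for $n$ odd, using that $(n-1)\cdot 2^i$ is divisible by $2^{i+1}$, so $\mu_0^{(n-1)\cdot 2^i}$ is still a $d^{2^{i+1}}$-cycle by the inductive hypothesis.

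The hard part will be ruling out extra generators of $\pi_{*,*}MHH(M\mathbb{Z}/2)[\tau^{-1}]$ beyond the polynomial subalgebra $\mathbb{F}_2[\tau^{\pm 1}][\mu_0]$. A hypothetical extra class in column $s$ would, by the multiplicativity of the spectral sequence, force additional classes in strictly higher columns with no admissible target to die against on any subsequent page. Ruling this out requires a careful column-by-column dimension count, tracking on each page $E^q$ the quotient of $E^q_{0,*,*}$ by the ideal generated by images of already-known differentials, and checking that the subalgebra of $\pi_{*,*}\mathcal{A}(2)[\tau^{-1}] = \mathbb{F}_2[\tau^{\pm 1}, \tau_i]_{i \geq 0}$ that must die is exhausted \emph{exactly} by the Leibniz propagates of the transgressions $\mu_0^{2^i} \mapsto \tau_i$, leaving precisely $\mathbb{F}_2[\tau^{\pm 1}]$ at the origin as required.
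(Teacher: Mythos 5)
Your overall strategy coincides with the paper's: an induction anchored in the low-degree analysis, convergence forcing transgressions, Corollary \ref{cor:power.operations} (together with the transgression picture of Proposition \ref{prop:magic.square}) identifying the transgression sources with powers of $\mu_0$, and the Leibniz rule propagating everything else. One small remark on your determination of the transgression: the ``weight matching'' step, which writes $\sigma_*\tau_i = c_i\,\tau^{1-2^i}\mu_0^{2^i}$ inside $\pi_{2^{i+1},*}MHH(M\mathbb{Z}/2)[\tau^{-1}] \cong \mathbb{F}_2[\tau^{\pm 1}]\{\mu_0^{2^i}\}$, presupposes the structure of exactly the group you are in the middle of determining at stage $s=2^{i+1}$; this is easily repaired (and the constants $c_i$ are unnecessary), since iterating $\tau\,\sigma_*\tau_{i+1}=(\sigma_*\tau_i)^2$ with $\tau$ invertible gives $\sigma_*\tau_i=\tau^{1-2^i}\mu_0^{2^i}$ directly from $\sigma_*\tau_0=\mu_0$, which is how the paper argues (existence of the transgression source is forced by convergence, and the power operation then identifies it as $\tau^{-2^k+1}\mu_0^{2^k}$).

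The genuine gap is the part you yourself flag as ``the hard part'' and then only describe rather than prove: ruling out extra polynomial generators and verifying that the Leibniz-propagated differentials kill \emph{exactly} the ideal $\langle\tau_0,\ldots,\tau_{k-1}\rangle$ in the zeroth column at the right pages, leaving only $\mathbb{F}_2[\tau^{\pm 1}]$ at the origin. This is not a routine bookkeeping afterthought; it is the bulk of the paper's proof. Concretely, one must show that each Leibniz-propagated differential such as $d^{2^{j+1}}(\mu_0^{2^k+i/2})=\mu_0^{2^k+i/2-2^j}\tau_j$ is nonzero at the page where it occurs (the target could a priori have been killed earlier, or the source could have acquired $\mathbb{F}_2[\tau^{\pm 1}]$-torsion on intermediate pages), and that it extends to an isomorphism of free modules onto the $\tau_j$-multiples of the target column, so that the columns $E^2_{s,*,*}$, $1\le s\le 2^{k+1}-1$, vanish at precisely the pages needed and $E^{2^k+1}_{0,*,*}\cong E^2_{0,*,*}/\langle\tau_0,\ldots,\tau_{k-1}\rangle$. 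The paper secures this with a nested induction on $i=0,\ldots,2^{k+1}-1$ together with Claim \ref{clm:in.the.proof.of.prop:pi.MHH(MZ/2)[tau-1]}, which identifies each column over $\mu_0^{m2^{j+1}+2^j}$ with the zeroth column as an $E$-page module, and it is exactly this identification that makes your ``dimension count'' come out even and excludes alien classes (any extra class would either create a permanent cycle or disrupt a differential that convergence has already forced to be an isomorphism). Without this step, or an equivalent argument, the proposal establishes the pattern of transgressions but not the completeness assertion that $\pi_{*,*}MHH(M\mathbb{Z}/2)[\tau^{-1}]$ is the polynomial ring on $\mu_0$ alone.
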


One can deduce from this a very precise description of all differentials in the spectral sequence, just by applying the Leibniz rule:
\begin{cor}\label{cor:overwhelming.p=2}
The elements in $E^2_{*,*,*}$ 
are given by $\mathbb{F}_2[\tau^{\pm 1}]$-linear combinations of generators of the form $\mu_0^{n2^i}\tau_J^{e_J}$, where $J=(j_1,j_2,\ldots,j_M)$ and $e_J=(e_{j_1},e_{j_2},\ldots,e_{j_M})$ are multi-indices of non-negative integers, with $n$ odd, the $j_h$ distinct and the $e_j$ strictly positive, and  $\xi_J^{e_J}=\xi_{j_1}^{e_{j_1}}\xi_{j_2}^{e_{j_2}}\cdots \xi_{j_M}^{e_{j_M}}$. The degrees are: $|\mu_0|=(2,0,0)$ and $|\tau_j|=(0,2^{j+1}-1,2^j-1)$.

$\mu_0^{n2^i}\tau_J^{e_J}$ survives to the $E^{2^{k+1}}$ page and is involved in a $d^{2^{k+1}}$ differential, where $k=min(i,J)$. The differential is exiting if $k=i$ and has image $\mu_0^{(n-1)2^i}\tau_i\tau_J^{e_J}$, otherwise it is entering from $\mu^{n2^i+2^k}\tau_J^{e_J}/\tau_k$.
\end{cor}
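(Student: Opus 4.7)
The strategy is to exploit the multiplicative structure of the spectral sequence: since $\pi_{*,*}\mathcal{A}(2)[\tau^{-1}]\cong\mathbb{F}_2[\tau^{\pm 1}][\tau_i]_{i\geq 0}$ is polynomial and proposition \ref{prop:pi.MHH(MZ/2)[tau-1]} identifies $\pi_{*,*}MHH(M\mathbb{Z}/2)[\tau^{-1}]\cong\mathbb{F}_2[\tau^{\pm 1}][\mu_0]$, the $E^2$ page admits an $\mathbb{F}_2[\tau^{\pm 1}]$-monomial basis of the form $\mu_0^a\tau_J^{e_J}$. Writing $a=n2^i$ with $n$ odd (its canonical $2$-adic decomposition) is precisely the bookkeeping needed to read off the input of proposition \ref{prop:pi.MHH(MZ/2)[tau-1]}. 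The whole corollary will then follow from a direct Leibniz-rule computation, exploiting the crucial observation that the factor $\tau_J^{e_J}$ lies in the zeroth column and therefore has vanishing outgoing differentials at every page.

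The main computation is then as follows. For every $r$, Leibniz combined with $d^r(\tau_J^{e_J})=0$ gives
\[
d^r(\mu_0^{n2^i}\tau_J^{e_J})=d^r(\mu_0^{n2^i})\,\tau_J^{e_J},
\]
which vanishes for $r<2^{i+1}$ by proposition \ref{prop:pi.MHH(MZ/2)[tau-1]} and hence, \emph{a fortiori}, for $r<2^{k+1}$ where $k=\min(i,j_1,\ldots,j_M)$. If $k=i$, the page-$2^{i+1}$ differential fires directly and gives $d^{2^{i+1}}(\mu_0^{n2^i}\tau_J^{e_J})=\mu_0^{(n-1)2^i}\tau_i\tau_J^{e_J}$, the stated exit. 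If instead $k=j_h<i$, the outgoing differential at page $2^{k+1}$ still vanishes; the entering differential is then exhibited by setting $a'=n2^i+2^k=2^k(1+n2^{i-k})$, noting that $1+n2^{i-k}$ is odd (because $i>k$), and invoking proposition \ref{prop:pi.MHH(MZ/2)[tau-1]} once more to obtain $d^{2^{k+1}}(\mu_0^{a'})=\mu_0^{n2^i}\tau_k$, whereupon Leibniz produces
\[
d^{2^{k+1}}\bigl(\mu_0^{a'}\tau_J^{e_J}/\tau_k\bigr)=\mu_0^{n2^i}\tau_J^{e_J}.
\]

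The one step requiring a little care, and likely the main obstacle, is verifying that $\mu_0^{n2^i}\tau_J^{e_J}$ is actually still present on page $E^{2^{k+1}}$ rather than being cancelled earlier. I would handle this by a short induction on pages: at each page $2^{\ell+1}$ with $\ell<k$, the case analysis above pairs off precisely the classes with $\min$-index equal to $\ell$ into source-target couples, so every class with $\min$-index $>\ell$ (in particular our $\mu_0^{n2^i}\tau_J^{e_J}$) passes to the next page intact. Tracking the $2$-adic valuation of the $\mu_0$-exponent in each target also confirms that these targets carry the correct $\min$-index for subsequent rounds, so the induction closes and only the unit class at $(0,0,*)$ survives to $E^{\infty}$, consistently with remark \ref{rmk:key.properties.spec.seq}.
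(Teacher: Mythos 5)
Your proposal is correct and follows essentially the same route as the paper: the corollary is meant to be deduced from proposition \ref{prop:pi.MHH(MZ/2)[tau-1]} purely by the Leibniz rule on the monomial basis $\mu_0^{n2^i}\tau_J^{e_J}$ of the $E^2$ page, with the differentials of zeroth-column classes vanishing for column reasons. Your page-by-page pairing induction guaranteeing that a class of $\min$-index $k$ is neither killed nor hit before page $E^{2^{k+1}}$ is exactly the survival bookkeeping that the paper records inside the inductive proof of proposition \ref{prop:pi.MHH(MZ/2)[tau-1]} (the column-vanishing statements and $E^{2^k+1}_{0,*,*}\cong E^2_{0,*,*}/\langle\tau_0,\ldots,\tau_{k-1}\rangle$), so nothing essential differs.
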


\begin{proof}[Proof of \ref{prop:pi.MHH(MZ/2)[tau-1]}]
We work this out by induction on a natural number $k\geq 1$, proving that there is an isomorphism of graded rings:
\begin{equation}\label{eq:isomorphism.pi.MHH.2}
    \pi_{*,*}MHH(M\mathbb{Z}/2)[\tau^{-1}] \cong \mathbb{F}_2[\tau^{\pm 1}, \mu_0]
\end{equation}
    
with $|\tau|=(0,-1)$, $|\mu_0|=(2,0)$, for the first degree $s \leq 2^{k+1}-1$ and all weights. At the same time, we show that the behaviour of these elements in the spectral sequence is determined by the rules expressed in our statement, plus the Leibniz rule, in the same range. As we will see, this will determine uniquely the behaviour in the spectral sequence of all elements in the columns $E^{2}_{s,*,*}$ for $1 \leq s \leq 2^{k+1}-1$; in particular, all modules in this range will be trivial after the $E^{2^k}$ page. Moreover, we will prove that $E^{2^k+1}_{0,*,*}\cong E^2_{0,*,*}/\langle \tau_0, \ldots \tau_{k-1}\rangle$.

Some base steps were already performed above.

Suppose now our claim holds for a certain positive number $k$; as all columns $E^{2^k+1}_{s,*,*}$ are trivial for $1 \leq s \leq 2^{k+1}-1$, we conclude that there are isomorphisms:
\[
    E^{2^k+1}_{0,*,*}\cong E^{2^k+2}_{0,*,*}\cong \ldots \cong E^{2^{k+1}}_{0,*,*}\cong E^2_{0,*,*}/\langle \tau_0, \ldots \tau_{k-1}\rangle.
\]
In particular, the element $\tau_k$, with $|\tau_k|=(2^{k+1}-1, 2^k-1)$, generates the non-trivial module in the zeroth column of the $E^{2^{k+1}}$-page with smallest positive degree. By the convergence of the spectral sequence, we must then have an element $\mu_k \in E^2_{2^{k+1},0,2^k-1}$ surviving up to the $E^{2^{k+1}}$-page and supporting a nontrivial $d^{2^{k+1}}$-differential $d^{2^{k+1}}(\mu_k)=\tau_k$. Using corollary \ref{cor:power.operations}, we obtain: $\mu_k\cong \tau^{-2^{k}+1}\mu_0^{2^k}$.

The $d^{2^{k+1}}$ differential extends linearly to an isomorphism of modules:
\[
    \mathbb{F}_2[\tau^{\pm 1}]\{ \mu_0^{2^k} \} \xrightarrow{d^{2^{k+1}}} \mathbb{F}_2[\tau^{\pm 1}]\{ \tau_k \} \cong E^{2^{k+1}}_{0,2^{k+1}-1,*}
\]
Because of the vanishing result on the columns $E^{2}_{s,*,*}$ for $1 \leq s \leq 2^{k+1}-1$, we conclude no other element can be found in $E^2_{2^{k+1},0,*}$, as we must avoid permanent cycles. Hence the isomorphism of equation \ref{eq:isomorphism.pi.MHH.2} extends to degree $2^{k+1}$.

\begin{rmk} \label{rmk:no.diffs.out.of.E2{2k+1-x,*,*}}
    Observe incidentally that there is a more concrete reason why no non-trivial differential shorter than a $d^{2^{k+1}}$ can come out of the $E^2_{2^{k+1},*,*}$ column: from our description, given a number $2 \leq x \leq 2^{k+1}$, the elements in the column $E^2_{2^{k+1}-x,*,*}$ vanish before the $E^x$ page or support in the $E^{x}$ page non-trivial $d^x$-differentials, that are isomorphisms onto their image. Anything else coming from the $2^{k+1}$-th column would be incompatible with this picture.
\end{rmk}

The rest of the proof of the induction step follows by application of the Leibniz rule. More precisely, one proceeds by induction on $i=0, \ldots, 2^{k+1}-1$ to show that the structure of $E^2_{2^{k+1}+i,0,*}$ coincides with that given by the powers of $\mu_0$, and that these elements support nontrivial differentials coming from the Leibniz rule. In other words, this means that if $i$ is odd $E^2_{2^{k+1}+i,0,*}$ is a trivial module, while if $i$ is even and $2^{j+1}$ is the largest power of $2$ dividing $i$, $E^2_{2^{k+1}+i,0,*}$ is generated by $\mu_0^{2^k+i/2}$, which survives until the $E^{2^{j+1}}$ page and supports a non-trivial $d^{2^{j+1}}$ differential, which extends to an isomorphism of $\mathbb{F}_2[\tau^{\pm 1}]$ modules. The step $i=0$ was discussed above; suppose then that the thesis holds up to a certain $i-1$.

If $i$ is odd, one sees that no element can be introduced, as they would produce permanent cycles:
\begin{itemize}
    \item in column $0$, the first ``non assigned'' module is generated by $\tau_k^2$ in degree $2^{k+2}-2$; now, $\mu_0^{2^k}\tau_k$ supports a $d^{2^{k+1}}$-differential hitting this element in the  $E^{2^{k+1}}$-page, by the Leibniz rule. This excludes the possibility that $\tau_k^2$ survives to higher pages, so no element in the region we are studying will support a non-trivial differential hitting it (as it would be longer than a $d^{2^{k+1}}$). In particular, no element has to be introduced in $\pi_{*, *}MHH(M\mathbb{Z}/2)[\tau^{-1}]$ in the region we are analysing to support transgressive differentials.
    \item the general induction procedure also allows us to exclude any differential landing in the columns from 1 to $2^k-1$, as the structure there is completely determined and any other differential would only produce cycles. This can follow also from what is stated in remark \ref{rmk:no.diffs.out.of.E2{2k+1-x,*,*}}.
    \item the remaining part of the proof finally excludes differentials starting from the horizontal axis in odd degree with image in the region $2^{k+1} \leq s \leq 2^{k+2}-1$: differentials out of powers of $\mu_0$ (forced by the Leibniz rule) must be non-trivial and kill enough elements in low degrees not to give space to any other non-trivial differential (hence any other element).
\end{itemize}

At any even $i$, let, as before, $2^{j+1}$ be the largest power of $2$ dividing $i$. Then the Leibniz rule imposes a differential 
\[
    d^{2^{j+1}}(\mu_0^{2^k+i})= \mu_0^{2^k+i/2-2^j} \tau_j.
\]
This differential is non-trivial. In fact, given the structure in the lower degrees, the element $\mu_0^{2^k+i/2-2^j} \tau_j$ is nonzero in the $E^{2^{j+1}}$ page: observe that $2^{j+1}$ divides $2^k+i/2-2^j$; so on one hand all lower degree differentials are trivial on $\mu_0^{2^k+i/2-2^j} \tau_j$ by the Leibniz rule (because they are trivial on $\mu_0^{2^k+i/2-2^j}$), and on the other hand, no lower degree differential applied to powers of $\mu_0$ has $\tau_j$ in the image (here we use that by induction hypothesis in lower degrees there are only the powers of $\mu_0$). This implies in particular that the differential extends to an isomorphism of modules:
\[
    \mathbb{F}_2[\tau^{\pm 1}]\{ \mu_0^{2^k+i/2} \} \xrightarrow{d^{2^{j+1}}} \mathbb{F}_2[\tau^{\pm 1}]\{ \mu_0^{2^k+i/2-2^j} \tau_j \} 
\]
implying in particular that $\mathbb{F}_2[\tau^{\pm 1}]\{ \mu_0^{2^k+i/2} \}$ is a free $\mathbb{F}_2[\tau^{\pm 1}]$-module.  Now we make the following observation:

\begin{claim} \label{clm:in.the.proof.of.prop:pi.MHH(MZ/2)[tau-1]}
    Suppose $\mu_0^{m2^{j+1}+2^j}$ for some $m>0$ generates a free $\mathbb{F}_2[\tau^{\pm 1}]$-module and has trivial differentials up to $d^{2^{j+1}}$.
    
    Then all the products $\mu_0^{m2^{j+1}+2^j+h}$ generate a free $\mathbb{F}_2[\tau^{\pm 1}]$-module for $0 \leq h \leq 2^j-1$; these elements support in the spectral sequence the non-trivial differentials determined by the Leibniz rule; moreover, there is an isomorphism of $E^{2^{j+1}}_{0,*,*}$-modules: 
    \[
        E^{2^{j+1}}_{0,*,*} \xrightarrow{\cdot \mu_0^{m2^{j+1}+2^j}} E^{2^{j+1}}_{m2^{j+2}+2^{j+1},*,*}
    \]
\end{claim}

Hence, the $d^{2^{j+1}}$ differential that acts non-trivially on $\mu_0^{m2^{j+1}+2^j}$, extended to the whole column by the Leibniz rule, provides an isomorphism of graded modules onto its image, corresponding to the $E^{2^{j+1}}_{0,*,*}$-module of $E^{2^{j+1}}_{2^{k+1}+i-2^{j+1},*,*}$ generated by $\tau_j$ (it kills all multiples of $\tau_j$ in that column). Observe that this whipes out the whole column over $\mu_0^{m2^{j+1}+2^j}$: $E^{2^{j+1}+1}_{m2^{j+2}+2^{j+1},*,*} \cong 0$.

At the end of the induction procedure, we see that there is an isomorphism of  $E^{2^{k+1}}_{0,*,*}$-modules:
\[
    E^{2^{k+1}}_{2^{k+1},*,*} \cong E^{2^{k+1}}_{0,*,*} \cdot \mu_0^{2^k},
\]
so everything that remains in $E^{2^{k+1}}_{2^{k+1},*,*}$ supports non-trivial differentials following the Leibniz rule. This implies in particular that the entire column $E^{2^{k+1}+1}_{2^{k+1},*,*}\cong 0$ is trivial; this concludes the $k+1$ step of the induction procedure, as we have uniquely determined the structure of $\pi_{*,*}MHH(M\mathbb{Z}/2)[\tau^{-1}]$ up to first degree $2^{k+2}-1$ and the behaviour of the spectral sequence of elements in $E^2_{s,*,*}$ for $1 \leq s \leq 2^{k+2}-1$.
\end{proof}

Observe that in this spectral sequence, we can deduce that all pages decompose as a tensor product of an algebra on the horizontal axis and an algebra on the vertical axis. This will not be the case when considering an odd prime.

\begin{proof}[Proof of \ref{clm:in.the.proof.of.prop:pi.MHH(MZ/2)[tau-1]}]
    By induction on $0 \leq j \leq k$. 

    The case $j=0$ corresponds to odd powers of $\mu_0$: here we have nothing to prove.

    Suppose the claim holds for all indices up to a certain $j-1$. Given $\mu_0^{m2^{j+1}+2^j}$, we make a finite induction on $0 \leq n \leq j-1$. At each step, we consider the element  $\mu_0^{m2^{j+1}+2^j+2^n}$; by the previous steps of this induction on $n$ we know that the differential given by the Leibniz rule (a $d^{2^{n+1}}$) provides an isomorphism of $\mathbb{F}_2[\tau^{\pm 1}]$-modules
    \[
        \mathbb{F}_2[\tau^{\pm 1}]\{\mu_0^{m2^{j+1}+2^j+2^n}\} \xrightarrow{d^{2^{n+1}}} \mathbb{F}_2[\tau^{\pm 1}]\{\mu_0^{m2^{j+1}+2^j} \tau_j\}
    \]
    Furthermore, this allows to apply the claim for $j=n$; in particular, 
    \[
        E^{2^{n+1}}_{0,*,*} \xrightarrow{\cdot \mu_0^{m2^{j+1}+2^j+2^n}} E^{2^{n+1}}_{m2^{j+2}+2^{j+1}+2^{n+1},*,*}
    \]
    is an isomorphism. Hence by linearity the $d^{2^{n+1}}$ extends to an isomorphism:
    \[
        E^{2^{n+1}}_{m2^{j+2}+2^{j+1}+2^{n+1},*,*}\xrightarrow{d^{2^{n+1}}} \tau_j \cdot E^{2^{n+1}}_{m2^{j+2}+2^{j+1},*,*}
    \]
    As each of these differentials acts as the analogous hitting the zeroth column, we get the desired isomorphism in the $E^{2^{j+1}}$-pages.
\end{proof}

\subsubsection{\texorpdfstring{Odd $p$}{Odd p}}\label{subsubs:odd.p}

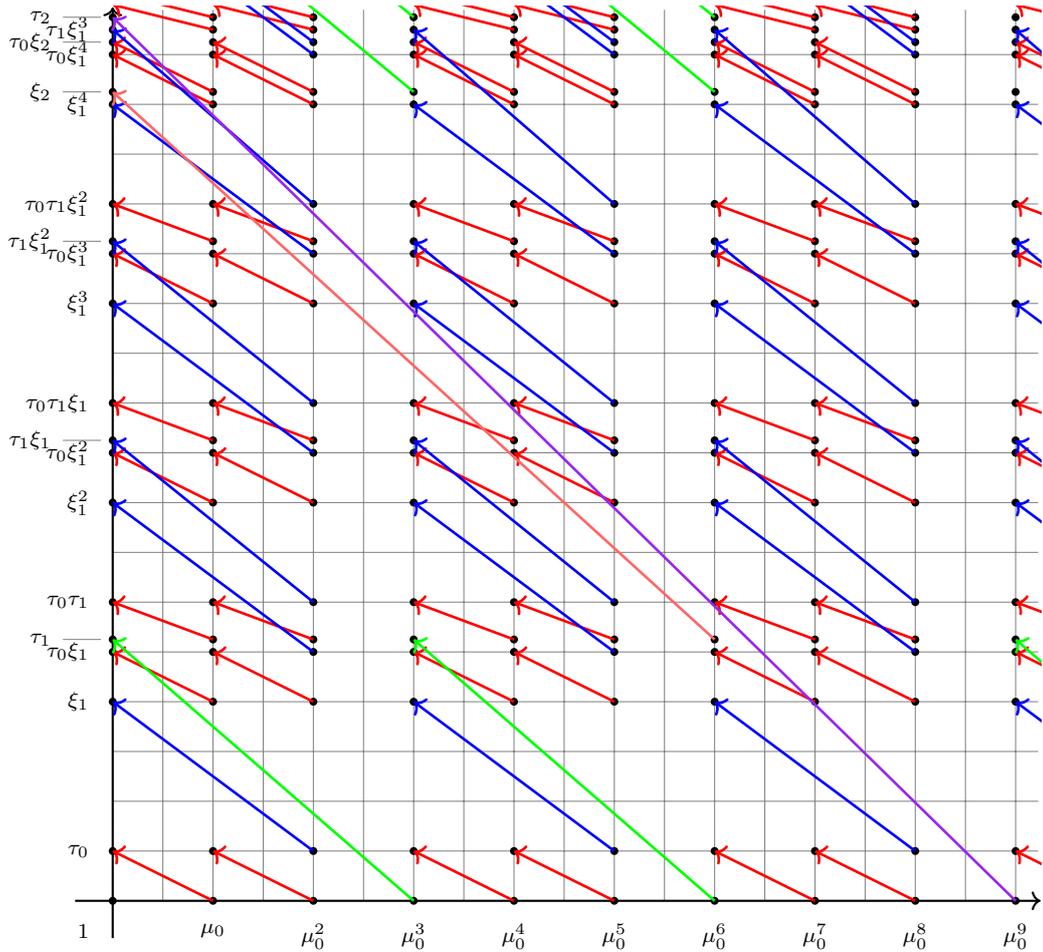
\begin{figure}[H]
\begin{center}
\begin{tikzpicture}[scale=0.33,line width=1pt] 

\clip(-5,-2) rectangle (37,35.95);

\draw[step=2cm ,gray,very thin] (0,0) grid (36.9,35.8);

\draw[black, line width=0.8pt,->](-1.5,0)--(37,0);
\draw[black, line width=0.8pt,->] (0,-1.5)-- (0,35.9);

\node[label={225:{\scriptsize $1$}}] at ( 0, 0){};

\node[pin={[pin distance= 0cm]180:{\scriptsize $\tau_0$}}] at ( 0, 2){};

\node[pin={[pin distance= 0cm]180:{\scriptsize $\xi_1$}}] at ( 0, 8){};

\node[pin={[pin distance= 0cm]180:{\scriptsize $\tau_0\xi_1$}}] at ( 0, 10){};
\node[pin={[pin distance= 0.5cm]180:{\scriptsize $\tau_1$}}] at ( 0, 10.5){};

\node[pin={[pin distance= 0cm]180:{\scriptsize $\tau_0\tau_1$}}] at ( 0, 12){};

\node[pin={[pin distance= 0cm]180:{\scriptsize $\xi_1^2$}}] at ( 0, 16.0){};

\node[pin={[pin distance= 0cm]180:{\scriptsize $\tau_0\xi_1^2$}}] at ( 0, 18.0){};
\node[pin={[pin distance= 0.5cm]180:{\scriptsize $\tau_1\xi_1$}}] at ( 0, 18.5){};

\node[pin={[pin distance= 0cm]180:{\scriptsize $\tau_0\tau_1\xi_1$}}] at ( 0, 20.0){};

\node[pin={[pin distance= 0cm]180:{\scriptsize $\xi_1^3$}}] at ( 0, 24){};

\node[pin={[pin distance= 0cm]180:{\scriptsize $\tau_0\xi_1^3$}}] at ( 0, 26.0){};
\node[pin={[pin distance= 0.5cm]180:{\scriptsize $\tau_1\xi_1^2$}}] at ( 0, 26.5){};

\node[pin={[pin distance= 0cm]180:{\scriptsize $\tau_0\tau_1\xi_1^2$}}] at ( 0, 28.0){};

\node[pin={[pin distance= 0cm]180:{\scriptsize $\xi_1^4$}}] at ( 0, 32.0){};
\node[pin={[pin distance= 0.5cm]180:{\scriptsize $\xi_2$}}] at ( 0, 32.5){};

\node[pin={[pin distance= 0cm]180:{\scriptsize $\tau_0\xi_1^4$}}] at ( 0, 34.0){};
\node[pin={[pin distance= 0.5cm]180:{\scriptsize $\tau_0\xi_2$}}] at ( 0, 34.5){};
\node[pin={[pin distance= 0cm]180:{\scriptsize $\tau_1\xi_1^3$}}] at ( 0, 35.0){};
\node[pin={[pin distance= 0.5cm]180:{\scriptsize $\tau_2$}}] at ( 0, 35.5){};

\node[label={270:{\scriptsize $\mu_0$}}] at ( 4, 0){};

\node[label={270:{\scriptsize $\mu_0^2$}}] at ( 8, 0){};

\node[label={270:{\scriptsize $\mu_0^3$}}] at ( 12, 0){};

\node[label={270:{\scriptsize $\mu_0^4$}}] at ( 16.0, 0){};

\node[label={270:{\scriptsize $\mu_0^5$}}] at ( 20, 0){};

\node[label={270:{\scriptsize $\mu_0^6$}}] at ( 24, 0){};

\node[label={270:{\scriptsize $\mu_0^7$}}] at ( 28, 0){};

\node[label={270:{\scriptsize $\mu_0^8$}}] at ( 32, 0){};

\node[label={270:{\scriptsize $\mu_0^9$}}] at ( 36, 0){};

{\draw[fill]
(0, 0) circle (3pt)
(0, 2) circle (3pt)
(0, 8) circle (3pt)
(0, 10) circle (3pt)
(0, 10.5) circle (3pt)
(0, 12)  circle (3pt)
(0, 16.0) circle (3pt)
( 0, 18.0) circle (3pt)
( 0, 18.5) circle (3pt)
( 0, 20.0) circle (3pt)
( 0, 24) circle (3pt)
( 0, 26.0) circle (3pt)
( 0, 26.5) circle (3pt)
( 0, 28.0) circle (3pt)
( 0, 32.0) circle (3pt)
( 0, 32.5) circle (3pt)
( 0, 34.0) circle (3pt)
( 0, 34.5) circle (3pt)
( 0, 35.0) circle (3pt)
( 0, 35.5) circle (3pt)
;}

{\draw[fill]
(4, 0) circle (3pt)
(4, 2) circle (3pt)
(4, 8) circle (3pt)
(4, 10) circle (3pt)
(4, 10.5) circle (3pt)
(4, 12)  circle (3pt)
(4, 16.0) circle (3pt)
(4, 18.0) circle (3pt)
(4, 18.5) circle (3pt)
(4, 20.0) circle (3pt)
(4, 24) circle (3pt)
(4, 26.0) circle (3pt)
(4, 26.5) circle (3pt)
(4, 28.0) circle (3pt)
(4, 32.0) circle (3pt)
(4, 32.5) circle (3pt)
(4, 34.0) circle (3pt)
(4, 34.5) circle (3pt)
(4, 35.0) circle (3pt)
(4, 35.5) circle (3pt)
;}

{\draw[fill]
(8, 0) circle (3pt)
(8, 2) circle (3pt)
(8, 8) circle (3pt)
(8, 10) circle (3pt)
(8, 10.5) circle (3pt)
(8, 12)  circle (3pt)
(8, 16.0) circle (3pt)
(8, 18.0) circle (3pt)
(8, 18.5) circle (3pt)
(8, 20.0) circle (3pt)
(8, 24) circle (3pt)
(8, 26.0) circle (3pt)
(8, 26.5) circle (3pt)
(8, 28.0) circle (3pt)
(8, 32.0) circle (3pt)
(8, 32.5) circle (3pt)
(8, 34.0) circle (3pt)
(8, 34.5) circle (3pt)
(8, 35.0) circle (3pt)
(8, 35.5) circle (3pt)
;}

{\draw[fill]
(12, 0) circle (3pt)
(12, 2) circle (3pt)
(12, 8) circle (3pt)
(12, 10) circle (3pt)
(12, 10.5) circle (3pt)
(12, 12)  circle (3pt)
(12, 16.0) circle (3pt)
(12, 18.0) circle (3pt)
(12, 18.5) circle (3pt)
(12, 20.0) circle (3pt)
(12, 24) circle (3pt)
(12, 26.0) circle (3pt)
(12, 26.5) circle (3pt)
(12, 28.0) circle (3pt)
(12, 32.0) circle (3pt)
(12, 32.5) circle (3pt)
(12, 34.0) circle (3pt)
(12, 34.5) circle (3pt)
(12, 35.0) circle (3pt)
(12, 35.5) circle (3pt)
;}

{\draw[fill]
(16.0, 0) circle (3pt)
(16.0, 2) circle (3pt)
(16.0, 8) circle (3pt)
(16.0, 10) circle (3pt)
(16.0, 10.5) circle (3pt)
(16.0, 12)  circle (3pt)
(16.0, 16.0) circle (3pt)
(16.0, 18.0) circle (3pt)
(16.0, 18.5) circle (3pt)
(16.0, 20.0) circle (3pt)
(16.0, 24) circle (3pt)
(16.0, 26.0) circle (3pt)
(16.0, 26.5) circle (3pt)
(16.0, 28.0) circle (3pt)
(16.0, 32.0) circle (3pt)
(16.0, 32.5) circle (3pt)
(16.0, 34.0) circle (3pt)
(16.0, 34.5) circle (3pt)
(16.0, 35.0) circle (3pt)
(16.0, 35.5) circle (3pt)
;}

{\draw[fill]
(20.0, 0) circle (3pt)
(20.0, 2) circle (3pt)
(20.0, 8) circle (3pt)
(20.0, 10) circle (3pt)
(20.0, 10.5) circle (3pt)
(20.0, 12)  circle (3pt)
(20.0, 16.0) circle (3pt)
(20.0, 18.0) circle (3pt)
(20.0, 18.5) circle (3pt)
(20.0, 20.0) circle (3pt)
(20.0, 24) circle (3pt)
(20.0, 26.0) circle (3pt)
(20.0, 26.5) circle (3pt)
(20.0, 28.0) circle (3pt)
(20.0, 32.0) circle (3pt)
(20.0, 32.5) circle (3pt)
(20.0, 34.0) circle (3pt)
(20.0, 34.5) circle (3pt)
(20.0, 35.0) circle (3pt)
(20.0, 35.5) circle (3pt)
;}

{\draw[fill]
(24.0, 0) circle (3pt)
(24.0, 2) circle (3pt)
(24.0, 8) circle (3pt)
(24.0, 10) circle (3pt)
(24.0, 10.5) circle (3pt)
(24.0, 12)  circle (3pt)
(24.0, 16.0) circle (3pt)
(24.0, 18.0) circle (3pt)
(24.0, 18.5) circle (3pt)
(24.0, 20.0) circle (3pt)
(24.0, 24) circle (3pt)
(24.0, 26.0) circle (3pt)
(24.0, 26.5) circle (3pt)
(24.0, 28.0) circle (3pt)
(24.0, 32.0) circle (3pt)
(24.0, 32.5) circle (3pt)
(24.0, 34.0) circle (3pt)
(24.0, 34.5) circle (3pt)
(24.0, 35.0) circle (3pt)
(24.0, 35.5) circle (3pt)
;}

{\draw[fill]
(28.0, 0) circle (3pt)
(28.0, 2) circle (3pt)
(28.0, 8) circle (3pt)
(28.0, 10) circle (3pt)
(28.0, 10.5) circle (3pt)
(28.0, 12)  circle (3pt)
(28.0, 16.0) circle (3pt)
(28.0, 18.0) circle (3pt)
(28.0, 18.5) circle (3pt)
(28.0, 20.0) circle (3pt)
(28.0, 24) circle (3pt)
(28.0, 26.0) circle (3pt)
(28.0, 26.5) circle (3pt)
(28.0, 28.0) circle (3pt)
(28.0, 32.0) circle (3pt)
(28.0, 32.5) circle (3pt)
(28.0, 34.0) circle (3pt)
(28.0, 34.5) circle (3pt)
(28.0, 35.0) circle (3pt)
(28.0, 35.5) circle (3pt)
;}

{\draw[fill]
(32.0, 0) circle (3pt)
(32.0, 2) circle (3pt)
(32.0, 8) circle (3pt)
(32.0, 10) circle (3pt)
(32.0, 10.5) circle (3pt)
(32.0, 12)  circle (3pt)
(32.0, 16.0) circle (3pt)
(32.0, 18.0) circle (3pt)
(32.0, 18.5) circle (3pt)
(32.0, 20.0) circle (3pt)
(32.0, 24) circle (3pt)
(32.0, 26.0) circle (3pt)
(32.0, 26.5) circle (3pt)
(32.0, 28.0) circle (3pt)
(32.0, 32.0) circle (3pt)
(32.0, 32.5) circle (3pt)
(32.0, 34.0) circle (3pt)
(32.0, 34.5) circle (3pt)
(32.0, 35.0) circle (3pt)
(32.0, 35.5) circle (3pt)
;}

{\draw[fill]
(36.0, 0) circle (3pt)
(36.0, 2) circle (3pt)
(36.0, 8) circle (3pt)
(36.0, 10) circle (3pt)
(36.0, 10.5) circle (3pt)
(36.0, 12)  circle (3pt)
(36.0, 16.0) circle (3pt)
(36.0, 18.0) circle (3pt)
(36.0, 18.5) circle (3pt)
(36.0, 20.0) circle (3pt)
(36.0, 24) circle (3pt)
(36.0, 26.0) circle (3pt)
(36.0, 26.5) circle (3pt)
(36.0, 28.0) circle (3pt)
(36.0, 32.0) circle (3pt)
(36.0, 32.5) circle (3pt)
(36.0, 34.0) circle (3pt)
(36.0, 34.5) circle (3pt)
(36.0, 35.0) circle (3pt)
(36.0, 35.5) circle (3pt)
;}

\draw[colarr2,->] (4, 0) -- (0, 2); 
\draw[colarr2,->] (4, 8)  -- (0,10);
\draw[colarr2,->] (4, 10.5)-- (0,12);
\draw[colarr2,->] (4, 16.0)-- (0,18);
\draw[colarr2,->] (4, 18.5)-- (0,20);
\draw[colarr2,->] (4, 24)-- (0,26);
\draw[colarr2,->] (4, 26.5)-- (0,28);
\draw[colarr2,->] (4, 32)-- (0,34);
\draw[colarr2,->] (4, 32.5)-- (0,34.5);
\draw[colarr2,->] (4, 35)-- (0,36);
\draw[colarr2,->] (4, 35.5)-- (0,36.5);

\draw[colarr2,->] (8, 0) --(4, 2); 
\draw[colarr2,->] (8, 8)  --(4,10);
\draw[colarr2,->] (8, 10.5)--(4,12);
\draw[colarr2,->] (8, 16.0)--(4,18);
\draw[colarr2,->] (8, 18.5)--(4,20);
\draw[colarr2,->] (8, 24)--(4,26);
\draw[colarr2,->] (8, 26.5)--(4,28);
\draw[colarr2,->] (8, 32)--(4,34);
\draw[colarr2,->] (8, 32.5)--(4,34.5);
\draw[colarr2,->] (8, 35)--(4,36);
\draw[colarr2,->] (8, 35.5)--(4,36.5);

\draw[colarr2,->] (16, 0) --(12, 2); 
\draw[colarr2,->] (16, 8)  --(12,10);
\draw[colarr2,->] (16, 10.5)--(12,12);
\draw[colarr2,->] (16, 16.0)--(12,18);
\draw[colarr2,->] (16, 18.5)--(12,20);
\draw[colarr2,->] (16, 24)--(12,26);
\draw[colarr2,->] (16, 26.5)--(12,28);
\draw[colarr2,->] (16, 32)--(12,34);
\draw[colarr2,->] (16, 32.5)--(12,34.5);
\draw[colarr2,->] (16, 35)--(12,36);
\draw[colarr2,->] (16, 35.5)--(12,36.5);

\draw[colarr2,->] (20, 0) --(16, 2); 
\draw[colarr2,->] (20, 8)  --(16,10);
\draw[colarr2,->] (20, 10.5)--(16,12);
\draw[colarr2,->] (20, 16.0)--(16,18);
\draw[colarr2,->] (20, 18.5)--(16,20);
\draw[colarr2,->] (20, 24)--(16,26);
\draw[colarr2,->] (20, 26.5)--(16,28);
\draw[colarr2,->] (20, 32)--(16,34);
\draw[colarr2,->] (20, 32.5)--(16,34.5);
\draw[colarr2,->] (20, 35)--(16,36);
\draw[colarr2,->] (20, 35.5)--(16,36.5);

\draw[colarr2,->] (28, 0) --(24, 2); 
\draw[colarr2,->] (28, 8)  --(24,10);
\draw[colarr2,->] (28, 10.5)--(24,12);
\draw[colarr2,->] (28, 16.0)--(24,18);
\draw[colarr2,->] (28, 18.5)--(24,20);
\draw[colarr2,->] (28, 24)--(24,26);
\draw[colarr2,->] (28, 26.5)--(24,28);
\draw[colarr2,->] (28, 32)--(24,34);
\draw[colarr2,->] (28, 32.5)--(24,34.5);
\draw[colarr2,->] (28, 35)--(24,36);
\draw[colarr2,->] (28, 35.5)--(24,36.5);

\draw[colarr2,->] (32, 0) --(28, 2); 
\draw[colarr2,->] (32, 8)  --(28,10);
\draw[colarr2,->] (32, 10.5)--(28,12);
\draw[colarr2,->] (32, 16.0)--(28,18);
\draw[colarr2,->] (32, 18.5)--(28,20);
\draw[colarr2,->] (32, 24)--(28,26);
\draw[colarr2,->] (32, 26.5)--(28,28);
\draw[colarr2,->] (32, 32)--(28,34);
\draw[colarr2,->] (32, 32.5)--(28,34.5);
\draw[colarr2,->] (32, 35)--(28,36);
\draw[colarr2,->] (32, 35.5)--(28,36.5);

\draw[colarr2,->] (40, 0) --(36, 2); 
\draw[colarr2,->] (40, 8)  --(36,10);
\draw[colarr2,->] (40, 10.5)--(36,12);
\draw[colarr2,->] (40, 16.0)--(36,18);
\draw[colarr2,->] (40, 18.5)--(36,20);
\draw[colarr2,->] (40, 24)--(36,26);
\draw[colarr2,->] (40, 26.5)--(36,28);
\draw[colarr2,->] (40, 32)--(36,34);
\draw[colarr2,->] (40, 32.5)--(36,34.5);
\draw[colarr2,->] (40, 35)--(36,36);
\draw[colarr2,->] (40, 35.5)--(36,36.5);

\draw[colarr4,->] (8, 2) -- (0, 8); 
\draw[colarr4,->] (8,10) -- (0,16.0);
\draw[colarr4,->] (8,12) -- (0,18.5); 
\draw[colarr4,->] (8,18) -- (0,24); 
\draw[colarr4,->] (8,20) -- (0,26.5);  
\draw[colarr4,->] (8,26) -- (0,32); 
\draw[colarr4,->] (8,28) -- (0,35); 
\draw[colarr4,->] (8,34) -- (0,40); 
\draw[colarr4,->] (8,34.5) -- (0,40.5); 

\draw[colarr4,->] (20, 2) -- (12, 8); 
\draw[colarr4,->] (20,10) -- (12,16.0);
\draw[colarr4,->] (20,12) -- (12,18.5); 
\draw[colarr4,->] (20,18) -- (12,24); 
\draw[colarr4,->] (20,20) -- (12,26.5);  
\draw[colarr4,->] (20,26) -- (12,32); 
\draw[colarr4,->] (20,28) -- (12,35); 
\draw[colarr4,->] (20,34) -- (12,40); 
\draw[colarr4,->] (20,34.5) -- (12,40.5); 

\draw[colarr4,->] (32, 2) -- (24, 8); 
\draw[colarr4,->] (32,10) -- (24,16.0);
\draw[colarr4,->] (32,12) -- (24,18.5); 
\draw[colarr4,->] (32,18) -- (24,24); 
\draw[colarr4,->] (32,20) -- (24,26.5);  
\draw[colarr4,->] (32,26) -- (24,32); 
\draw[colarr4,->] (32,28) -- (24,35); 
\draw[colarr4,->] (32,34) -- (24,40); 
\draw[colarr4,->] (32,34.5) -- (24,40.5); 

\draw[colarr4,->] (44, 2) -- (36, 8); 
\draw[colarr4,->] (44,10) -- (36,16.0);
\draw[colarr4,->] (44,12) -- (36,18.5); 
\draw[colarr4,->] (44,18) -- (36,24); 
\draw[colarr4,->] (44,20) -- (36,26.5);  
\draw[colarr4,->] (44,26) -- (36,32); 
\draw[colarr4,->] (44,28) -- (36,35); 
\draw[colarr4,->] (44,34) -- (36,40); 
\draw[colarr4,->] (44,34.5) -- (36,40.5);

\draw[colarr6,->] (12, 0) -- (0, 10.5); 
\draw[colarr6,->] (12, 32.5) -- (0,42.5);
\draw[colarr6,->] (12, 35.5) -- (0, 45.5);

\draw[colarr6,->] (24, 0) --(12, 10.5); 
\draw[colarr6,->] (24, 32.5) -- (12,42.5);
\draw[colarr6,->] (24, 35.5) -- (12, 45.5);

\draw[colarr6,->] (48, 0) --(36, 10.5); 
\draw[colarr6,->] (48, 32.5) -- (36,42.5);
\draw[colarr6,->] (48, 35.5) -- (36, 45.5);

\draw[colarr12,->] (24, 10.5) -- (0, 32.5); 

\draw[colarr18,->] (36, 0) -- (0, 35.5); 

\end{tikzpicture}

\caption{The spectral sequence for $\pi_{*,*} MHH(M\mathbb{Z}/3)[\tau^{-1}]$ in low degrees. Each point represents a generator of a module isomorphic to $\mathbb{F}_3[\tau^{\pm 1}]$.}

\end{center}

\end{figure}

As before, we begin with an inspection of what happens in low degrees. Observe that properties analogous to \ref{rmk:key.properties.spec.seq} hold in this setting as well.

The first observations coincide with those for $p=2$: we can conclude that we have the permanent cycle $1$ in degree $(0,0)$ and that $\pi_{1,*} MHH(M\mathbb{Z}/p)[\tau^{-1}]\cong 0$.

Next, as we have $\pi_{1,*}\mathcal{A}(p)[\tau^{-1}] \cong \mathbb{F}_p[\tau^{\pm 1}]\{\tau_0\}$, we need an element $\mu_0 \in \pi_{2,0} MHH(M\mathbb{Z}/p)[\tau^{-1}]$ with $d_2(\mu_0)=\tau_0$. By the Leibniz rule, we get an isomorphism of modules:
\begin{equation*}
\mathbb{F}_p[\tau^{\pm 1}]\{ \mu_0 \} \xrightarrow{d^2} \mathbb{F}_p[\tau^{\pm 1}]\{ \tau_0 \} \cong E^2_{0,1,*}.
\end{equation*}
As we must not introduce permanent cycles, we conclude:
\begin{equation}
\pi_{2,*} MHH(M\mathbb{Z}/p)[\tau^{-1}]\cong \mathbb{F}_p[\tau^{\pm 1}]\{ \mu_0 \}.
\end{equation}

We then observe that the $d^2$ differential kills all the elements in $\tau_0\pi_{*,*} \mathcal{A}(p) \subset E^2_{(0,*)}$, as we have:
\begin{equation}
d^2(\mu_0\alpha)=\tau_0\alpha
\end{equation}
for every $\alpha \in \pi_{*,*} \mathcal{A}(p) $.
On the other hand, as we have the relation $\tau_0^2=0$ in the dual Steenrod algebra, some elements in $E^2_{2,*,*}$ have a trivial $d^2$ differential, namely, those in $\mu_0 \tau_0 \pi_{*,*}\mathcal{A}(p)$. 

Next, we notice that $E^2_{3,0,*}$ must be a trivial module to avoid permanent cycles, hence:
\[
    \pi_{3,*} MHH(M\mathbb{Z}/p)[\tau^{-1}] \cong 0.
\]
Consequently, $E^k_{3,*,*}\cong 0$ for all $k \geq 2$.

In degree $\pi_{4,0} MHH(M\mathbb{Z}/p)[\tau^{-1}]$ we have the element $\mu_0^2$, which has $d^2$ differential $2\mu_0\tau_0$. Hence, by the Leibniz rule (we are working over $\mathbb{F}_p$ for $p$ odd), we have an isomorphism:
\[
    \mathbb{F}_p[\tau^{\pm 1}]\{ \mu_0^2 \} \xrightarrow{d^2} \mathbb{F}_p[\tau^{\pm 1}]\{ \mu_0\tau_0 \} \cong E^2_{2,1,*}.
\]

As the first column and the rows $E^2_{*,k,*}$ for $2 \leq k \leq 2p-3$ are empty, to avoid any permanent cycle we must have
\[
    \pi_{4,*} MHH(M\mathbb{Z}/p)[\tau^{-1}] \cong  \mathbb{F}_p[\tau^{\pm 1}]\{ \mu_0^2 \}
\]
The differential behavior of $\mu_0^2$ propagates via the Leibniz rule to the whole column $E^2_{4,*,*}$; the image of such differentials is the $E^2_{0,*,*}$-submodule of $E^2_{2,*,*}$ generated by $\tau_0$. Hence, the whole column $E^3_{2,*,*}\cong 0$ is trivial.

This process repeats identically up to first degree $2(p-1)$:
\begin{equation*}
\begin{array}{l}
\pi_{2k,*} MHH(M\mathbb{Z}/p)[\tau^{-1}]\cong \mathbb{F}_p[\tau^{\pm 1}]\{ \mu_0^k \}\\
\pi_{2k-1,*} MHH(M\mathbb{Z}/p)[\tau^{-1}]\cong 0.
\end{array} 
\hspace{1cm} k=1,\ldots,p-1.
\end{equation*}
For $k=1,\ldots,p-1$, $\mu_0^k$ supports a $d^2$-differential: $d^2(\mu_0^k)= k \mu_0^{k-1} \tau_0$; hence $E^3_{s,*,*}\cong 0$ for $1 \leq s \leq 2p-3$.

At this point, we have to take into account the element $\xi_1\in \pi_{2(p-1),p-1}\mathcal{A}(p)$, as some differentials might hit it. Namely, we have two possibilities (given in particular that the columns $E^3_{s,*,*}$ for $1 \leq s \leq 2p-3$ are empty):
\begin{itemize}
    \item The element $\mu_0^{p-1}\tau_0$ survives up to the $E^{2(p-1)}$ page and it originates a $d^{2(p-1)}$ differential: $d^{2(p-1)}(\mu_0^{p-1}\tau_0)=\tau^{p-1}\xi_1.$
    \item There are an element $\lambda_1 \in \pi_{2p-1,p-1} MHH(M\mathbb{Z}/p)[\tau^{-1}]$ surviving up to the $E^{2p-1}$ page and a $d^{2p-1}$ differential: $d^{2p-1}(\lambda_1)=\xi_1.$
\end{itemize}
To solve this issue, we make use of \ref{cor:power.operations}: we know that $\sigma_* \xi_1 = 0$, so it cannot be in the image of a non-trivial transgressive differential. Hence:
\[
    d^{2(p-1)}(\mu_0^{p-1}\tau_0)=\tau^{p-1}\xi_1.
\]

Moving on to degree $(2p-1,0)$, we see that any element here cannot have non-trivial differentials, so it would produce permanent cycles; so we must have:
\begin{equation*}
    \pi_{2p-1,*} MHH(M\mathbb{Z}/p)[\tau^{-1}]\cong 0.
\end{equation*}

In the next horizontal degree, we encounter the element $\mu_0^p$; it has a trivial $d^2$ differential by characteristic reasons. On the other hand, by what we have determined so far, we need an element $\mu_1 \in \pi_{2p,p-1} MHH(M\mathbb{Z}/p)[\tau^{-1}]$ surviving up to the $E^{2p}$ page and a differential $d^2(\mu_1)=\tau_1$. Thanks to corollary \ref{cor:power.operations}, we can make the identification:
\[
    \mu_1=\sigma_* \tau_1= \tau^{-p+1}(\sigma_* \tau_0)^p=\tau^{-p+1}\mu_0^p.
\]
As usual, this extends linearly to an isomorphism:
\[
    \mathbb{F}_p[\tau^{\pm 1}]\{ \mu_0^p \} \xrightarrow{d^{2p}} \mathbb{F}_p[\tau^{\pm 1}]\{ \tau_1 \} \cong E^{2p}_{0,2p-1,*}
\]
that allows us to conclude:
\[
    \pi_{2p,*} MHH(M\mathbb{Z}/p)[\tau^{-1}] \cong  \mathbb{F}_p[\tau^{\pm 1}]\{ \mu_0^p \}.
\]

Following this pattern, by considering higher powers of $\mu_0$ and differentiating according to the Leibniz rule, one uncovers the structure of $\pi_{*,*} MHH(M\mathbb{Z}/p)[\tau^{-1}]$ in higher degrees; in particular, after considering enough degrees, one can confirm that all the classes in $E^2_{2(p-1),*,*}$ support either a non-trivial $d^2$ or a non-trivial $d^{2(p-1)}$-differential, and are not in the image of any differential. After this page, $E^{2p-1}_{2(p-1),*,*} \cong 0$. Similarly, one confirms that the classes in $E^2_{2p,*,*}$ that are not hit by a $d^2$-differential, and hence survive to the $E^3$-page, are either hit by a $d^{2p}$-differential from the $4p$-th column (we have $E^2_{4p,*,*} \cong E^2_{0,*,*} \{ \mu_0^{2p}\}$) or support a non-trivial $d^{2p}$-differential. In any case, $E^{2p+1}_{2p,*,*} \cong 0$.

The next step that requires further investigation is when one needs to deal with $\xi_2\in \pi_{2(p^2-1),p^2-1}\mathcal{A}(p)$. At the same time, we notice that the element $\mu_0^{p(p-1)}\tau_1 \in E^{2(p-1)}_{2p(p-1),2p-1, p-1}$ has
\[
    d^{2(p-1)}(\mu_0^{p(p-1)}\tau_1)=(p-1)\mu_0^{p(p-2)}\tau_1^2=0
\]
As for $\xi_1$, we need to add an ``extra rule'' to our spectral sequence, and, as before, we just have two options:
\begin{itemize}
    \item The element $\mu_0^{p(p-1)}\tau_1$ survives up to the $E^{2p(p-1)}$ page and originates a $d^{2p(p-1)}$ differential: $d^{2p(p-1)}(\mu_0^{p(p-1)}\tau_1)=\tau^{p^2-1}\xi_2.$
    \item There are an element $\lambda_2 \in \pi_{2p^2-1,p^2-1} MHH(M\mathbb{Z}/p)[\tau^{-1}]$ surviving up to the $E^{2p-1}$ page and a $d^{2p-1}$ differential: $d^{2p^2-1}(\lambda_1)=\xi_1.$
\end{itemize}
We exclude the second option again thanks to \ref{cor:power.operations}, as $\xi_2$ is not transgressive, so there must be a differential:
\[
    d^{2p(p-1)}(\mu_0^{p(p-1)}\tau_1)=\tau^{p^2-1}\xi_2
\]
which by linearity produces an isomorphism of modules:
\[
    \mathbb{F}_p[\tau^{\pm 1}]\{ \mu_0^{p(p-1)}\tau_1 \} \xrightarrow{d^{2p(p-1)}} \mathbb{F}_p[\tau^{\pm 1}]\{ \xi_2 \} \cong E^{2p}_{0,2(p^2-1),*}.
\]

This hints at how the spectral sequence works in general:
\begin{prop}\label{prop:MHH(p).[tau-1]}
    $\pi_{*,*} MHH(M\mathbb{Z}/p)[\tau^{-1}]$ is an $\mathbb{F}_p[\tau^{\pm 1}]$ algebra in one single polynomial generator $\mu_0$ of degree $(2,0)$. \\
    If $n$ is any integer coprime with $p$, then $\mu_0^{np^i}$ survives in the spectral sequence up to the $2p^i$-page, where it supports a differential $d^{2p^i}(\mu_0^{np^i})=n\mu_0^{(n-1)p^i} \tau_i$. Elements of the form $\mu_0^{n(p-1)p^i}\tau_i\in E^2$ survive up to the $E^{2(p-1)p^i}$ page, where they support a differential $d^{2(p-1)p^i}(\mu_0^{n(p-1)p^i}\tau_i)=n\mu_0^{(n-1)(p-1)p^i}\xi_{i+1}$. 
\end{prop}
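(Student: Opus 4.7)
The plan is to mirror the inductive argument of Proposition~\ref{prop:pi.MHH(MZ/2)[tau-1]}, adapted to the odd primary setting. The main new feature with respect to $p=2$ is that $\pi_{*,*}\mathcal{A}(p)[\tau^{-1}] \cong \mathbb{F}_p[\tau^{\pm 1}, \xi_i, \tau_i]/(\tau_i^2)$ has two distinct families of polynomial generators on the vertical axis, and both must be exhausted by differentials coming out of the horizontal axis. I would proceed by induction on $k \geq 0$, proving simultaneously that $\pi_{s,*}MHH(M\mathbb{Z}/p)[\tau^{-1}] \cong \mathbb{F}_p[\tau^{\pm 1}][\mu_0]$ in the range $s \leq 2p^{k+1}-1$, that the differentials on the powers $\mu_0^m$ with $m < p^{k+1}$ are exactly those described in the statement (propagated through the rest of the page by the Leibniz rule), and that every class in $E^2_{s,*,*}$ with $1 \leq s \leq 2p^{k+1}-1$ is either a target or a source of a non-trivial differential. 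The base step is the explicit low-degree analysis already carried out after Proposition~\ref{prop:spectral.seq.tau.-1}.

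The inductive step splits into two parallel subcases. The $\tau_{i+1}$-subcase is formally identical to the $p=2$ argument: Corollary~\ref{cor:power.operations} identifies $\mu_{i+1} := \sigma_*\tau_{i+1}$ with $\tau^{-(p^{i+1}-1)}\mu_0^{p^{i+1}}$, so $\mu_0^{np^{i+1}}$ for $n$ coprime to $p$ first supports a non-zero differential on the $E^{2p^{i+1}}$-page by Leibniz, and its image must be $n\mu_0^{(n-1)p^{i+1}}\tau_{i+1}$, nonzero in $E^{2p^{i+1}}_{0,2p^{i+1}-1,*}$ by the inductive description of the zeroth column; linear extension then gives an isomorphism of $\mathbb{F}_p[\tau^{\pm 1}]$-modules, ruling out new generators. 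The $\xi_{i+1}$-subcase is genuinely new because $\tau_i^2 = 0$: the Leibniz rule yields $d^{2p^i}(\mu_0^{n(p-1)p^i}\tau_i) = n\mu_0^{(n-1)(p-1)p^i}\tau_i^2 = 0$, so these classes survive past $E^{2p^i}$ and become the natural candidate source for a longer transgression targeting $\xi_{i+1}$.

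To rule out the alternative scenario in which a fresh generator $\lambda_{i+1} \in \pi_{2p^{i+1}-1,*}MHH(M\mathbb{Z}/p)[\tau^{-1}]$ would transgress onto $\xi_{i+1}$, I would invoke Proposition~\ref{prop:magic.square} together with the identity $\sigma_*\xi_{i+1} = 0$ from Corollary~\ref{cor:power.operations}: the commutative diagram of Proposition~\ref{prop:magic.square} shows that no transgressive differential can hit $\xi_{i+1}$, since the corresponding source in $\pi_{*,*}MHH(M\mathbb{Z}/p)[\tau^{-1}]$ would have to equal $\sigma_*\xi_{i+1} = 0$. Convergence then forces $\xi_{i+1}$ to be killed by the non-transgressive differential $d^{2(p-1)p^i}(\mu_0^{n(p-1)p^i}\tau_i) = n\mu_0^{(n-1)(p-1)p^i}\xi_{i+1}$, which extends linearly to an isomorphism of $\mathbb{F}_p[\tau^{\pm 1}]$-modules onto the $\xi_{i+1}$-summand of the zeroth column at page $E^{2(p-1)p^i}$.

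The main obstacle will be the combinatorial bookkeeping: one has to verify that, once these two families of differentials have fired in the correct order, every class in the columns $E^q_{s,*,*}$ for $2 \leq s \leq 2p^{k+1}-1$ is exhausted and no new odd-degree generator is needed in $\pi_{*,*}MHH(M\mathbb{Z}/p)[\tau^{-1}]$. As in the $p=2$ case, this is done by a finite downward induction on the $p$-adic valuation of the horizontal coordinate, coupled with an analogue of Claim~\ref{clm:in.the.proof.of.prop:pi.MHH(MZ/2)[tau-1]} asserting that multiplication by the relevant power of $\mu_0$ is an isomorphism of $E^{2p^i}_{0,*,*}$-modules on the appropriate columns. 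The specifically odd-primary delicacy is precisely the gap between the Leibniz-induced vanishing of $d^{2p^i}$ at $\mu_0^{n(p-1)p^i}\tau_i$ and the non-trivial hit appearing only at page $E^{2(p-1)p^i}$; resolving this gap is where Proposition~\ref{prop:magic.square} and Corollary~\ref{cor:power.operations} become essential rather than ornamental.
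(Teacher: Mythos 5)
Your proposal follows essentially the same route as the paper: induction on $k$ with the low-degree analysis as base step, Corollary \ref{cor:power.operations} to identify the transgressive sources $\tau^{-(p^{i}-1)}\mu_0^{p^{i}}$ for the $\tau_i$, Proposition \ref{prop:magic.square} together with $\sigma_*\xi_{i+1}=0$ to exclude a new transgressive generator and force $d^{2(p-1)p^i}(\mu_0^{n(p-1)p^i}\tau_i)=n\mu_0^{(n-1)(p-1)p^i}\xi_{i+1}$, and an analogue of Claim \ref{clm:in.the.proof.of.prop:pi.MHH(MZ/2)[tau-1]} for the column bookkeeping. The bookkeeping you defer is precisely what the paper's Claim \ref{clm:in.the.proof.of.prop:pi.MHH(MZ/p)[tau-1]} and Remark \ref{rmk:on.the.violation.of.Leibniz} (controlling possible $\mathbb{F}_p[\tau^{\pm 1}]$-torsion when extending differentials by the Leibniz rule) carry out, so your plan matches the paper's proof in all essentials.
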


This allows in fact to explicit the behaviour of any element in the $E^2$ page:
\begin{cor}\label{cor:overwhelming.p.odd}
The elements in $E^2_{s,t,*}=\pi_{s,*} MHH(M\mathbb{Z}/p)[\tau^{-1}] \otimes_{\mathbb{F}_p[\tau^{\pm 1}]} \pi_{t,*} \mathcal{A}(p)$ are $\mathbb{F}_p[\tau^{\pm 1}]$ linear combinations of elements of the form: $\mu_0^{np^i}\tau_K\xi_J^{e_J}$, where:
\begin{itemize}
    \item $J=(j_1,j_2,\ldots,j_M)$ and $ K=(k_1,k_2,\ldots,k_N)$ are multi-indices of distinct positive integers; 
    \item $e_J=(e_{j_1},e_{j_2},\ldots,e_{j_M})$ is a  multi-index of positive integers;
    \item $\tau_K=\tau_{k_1}\tau_{k_2}\cdots \tau_{k_N}$ and $\xi_J^{e_J}=\xi_{j_1}^{e_{j_1}}\xi_{j_2}^{e_{j_2}}\cdots \xi_{j_M}^{e_{j_M}}$;
    \item $GCD(n,p)=1$.
\end{itemize}

The degrees are: $|\mu_1|=(2,0,0)$, $|\tau_k|=(0,2p^k-1,p^k-1)$ and $|\xi_j|=(0,2(p^j-1),p^k-1)$.

The element $\mu_0^{np^i}\tau_K\xi_J^{e_J}$ has the following behaviour in the spectral sequence: let $h=min(i,K,J)$.
\begin{itemize}
    \item If $i=h$, $n+1 \equiv 0\, (mod.\, p)$ and a $\tau_i$ appears in the decomposition, the element survives to the $E^{2p^i(p-1)}$ page and differentiates:
    \begin{equation}
        d^{2p^i(p-1)}(\mu_0^{np^i}\tau_K\xi_J^{e_J})=\frac{\mu_0^{(n-p+1)p^i}\tau_K\xi_J^{e_J}\xi_{i+1}}{\tau_i}
    \end{equation}
    \item Otherwise, the following applies:
    \begin{itemize}
        \item $\mu_0^{np^h}$ is involved in a $d^{2p^h}$  exiting differential: 
        \begin{equation}
            d^{2p^h}(\mu_0^{np^h})=n\mu_0^{(n-1)p^h}\tau_h;
        \end{equation}
        \item $\tau_h$ is involved in a $d^{2p^{h}}$ entering differential: 
        \begin{equation}
            d^{2p^h}(\mu_0^{p^h})=\tau_h;
        \end{equation}
        \item $\xi_h$ is involved in a $d^{2(p-1)p^{h-1}}$  entering differential:
        \begin{equation}
            d^{2(p-1)p^{h-1}}(\mu_0^{(p-1)p^{h-1}\tau_{h-1}})=\xi_{h}.
        \end{equation}
    \end{itemize}
    The factor determining the index $h$ determines the behaviour of the product. In case of multiple elements with the same index $h$, $\xi_h$ prevails on $\tau_h$ that prevails on $\mu_0^{np^h}$.
\end{itemize}
\end{cor}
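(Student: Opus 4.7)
The plan is to derive this corollary directly from Proposition \ref{prop:MHH(p).[tau-1]} by applying the multiplicative Leibniz rule in the spectral sequence. The first assertion---the form of elements in $E^2_{s,t,*}$---is essentially tautological: combining the $E^2$ formula from Proposition \ref{prop:spectral.seq.tau.-1} with $\pi_{*,*}MHH(M\mathbb{Z}/p)[\tau^{-1}] \cong \mathbb{F}_p[\tau^{\pm 1}][\mu_0]$ from Proposition \ref{prop:MHH(p).[tau-1]} and the description of $\pi_{*,*}\mathcal{A}(p)[\tau^{-1}]$ in \ref{eqn:homotopy.MZp.A.tau-1} realises $E^2$ as a free $\mathbb{F}_p[\tau^{\pm 1}]$-module on monomials $\mu_0^m \tau_K \xi_J^{e_J}$, where the entries of $K$ are necessarily distinct because $\tau_i^2=0$. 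Writing each nonzero exponent uniquely as $m=np^i$ with $\gcd(n,p)=1$ gives the stated parametrisation.

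For the differential behaviour, I would exploit the Leibniz rule together with the two atomic differentials from Proposition \ref{prop:MHH(p).[tau-1]}, namely $d^{2p^v}(\mu_0^{np^v}) = n\mu_0^{(n-1)p^v}\tau_v$ and $d^{2(p-1)p^v}(\mu_0^{n(p-1)p^v}\tau_v) = n\mu_0^{(n-1)(p-1)p^v}\xi_{v+1}$ (for $\gcd(n,p)=1$). Each atomic factor has an associated firing page: $\mu_0^{np^h}$ at $2p^h$ (exiting), $\tau_h$ at $2p^h$ (entering, as the target of $\mu_0^{p^h}\mapsto\tau_h$), and $\xi_h$ at $2(p-1)p^{h-1}$ (entering, as the target of $\mu_0^{(p-1)p^{h-1}}\tau_{h-1}\mapsto\xi_h$). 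In a product $\mu_0^{np^i}\tau_K\xi_J^{e_J}$, any factor of index strictly greater than $h=\min(i,K,J)$ is a permanent cycle through all pages up to at least $2p^{h+1}$, so only index-$h$ factors can fire first; since $2(p-1)p^{h-1}<2p^h$ for $p$ odd, the ordering $\xi_h \succ \tau_h \succ \mu_0^{np^h}$ emerges.

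The main obstacle is the exceptional case $i=h$, $\tau_i\in K$, $n+1\equiv 0 \pmod p$, in which both the naive exiting $d^{2p^i}$ and the naive entering $d^{2p^i}$ vanish. The exiting computation gives
\[
 d^{2p^i}(\mu_0^{np^i}\tau_i\cdot\mathrm{rest})=n\mu_0^{(n-1)p^i}\tau_i^{2}\cdot\mathrm{rest}+0=0
\]
by $\tau_i^2=0$ and the vanishing of $d^{2p^i}$ on higher-index factors; the would-be entering source $\mu_0^{(n+1)p^i}\tau_{K\setminus\{i\}}\xi_J^{e_J}$ has $v_p((n+1)p^i)\ge i+1$, so it does not support $d^{2p^i}$ either. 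The element therefore survives to $E^{2(p-1)p^i}$. Writing $n=p-1+kp$ (possible precisely because $n\equiv-1\pmod p$) lets one factor $\mu_0^{np^i}=\mu_0^{(p-1)p^i}\cdot\mu_0^{kp^{i+1}}$; applying Leibniz with $d^{2(p-1)p^i}(\mu_0^{(p-1)p^i}\tau_i)=\xi_{i+1}$ and using that $\mu_0^{kp^{i+1}}$ and the other factors are permanent cycles through that page (as $2(p-1)p^i<2p^{i+1}$) yields the stated formula $\mu_0^{(n-p+1)p^i}\tau_{K\setminus\{i\}}\xi_J^{e_J}\xi_{i+1}$.

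The ``otherwise'' cases follow from the same mechanism, with each prevailing factor supplying the correct entering or exiting differential: if $\tau_h$ prevails and we are outside the exceptional case, $\gcd(n+1,p)=1$ and the entering $d^{2p^h}$ from $\mu_0^{(n+1)p^h}\cdot(\mathrm{rest}/\tau_h)$ kills the element; if $\xi_h$ prevails, the entering $d^{2(p-1)p^{h-1}}$ from $\mu_0^{(p-1)p^{h-1}}\tau_{h-1}\cdot(\mathrm{rest}/\xi_h)$ does the job; if $\mu_0^{np^h}$ prevails alone, its own exiting $d^{2p^h}$ extends intact by Leibniz. Organising the argument by $h$ and by which factor attains the minimum recovers the full description.
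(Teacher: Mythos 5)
Your proposal is correct and takes essentially the same route as the paper, which offers no separate proof of this corollary but presents it (exactly as for the $p=2$ analogue \ref{cor:overwhelming.p=2}) as the result of applying the Leibniz rule to the two families of differentials established in Proposition \ref{prop:MHH(p).[tau-1]}, together with the computation of $\pi_{*,*}MHH(M\mathbb{Z}/p)[\tau^{-1}]$ and the known structure of $\pi_{*,*}\mathcal{A}(p)[\tau^{-1}]$. Your page-comparison of the three ``atomic'' factors and the explicit handling of the exceptional case $i=h$, $p\mid n+1$ via the factorisation $\mu_0^{np^i}=\mu_0^{(p-1)p^i}\cdot\mu_0^{kp^{i+1}}$ is a faithful (indeed more detailed) elaboration of that intended argument.
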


\begin{proof}[Proof of \ref{prop:MHH(p).[tau-1]}]
The proof is similar to the case $p=2$, but it presents the additional complication of a richer structure in the dual motivic Steenrod algebra, which is paralleled by a greater variety of differentials.

We proceed by induction on a natural number $k\geq 1$, showing that there is an isomorphism of graded rings:
\begin{equation}\label{eq:isomorphism.pi.MHH.p}
    \pi_{*,*}MHH(M\mathbb{Z}/p)[\tau^{-1}] \cong \mathbb{F}_p[\tau^{\pm 1}, \mu_0]
\end{equation}
with $|\tau|=(0,-1)$, $|\mu_0|=(2,0)$, for the first degree $s \leq 2p^k-1$ and all weights. At the same time, we show that the behaviour of these elements in the spectral sequence has to correspond to what we claimed. As we will see, this, together with the Leibniz rule, will force uniquely the behaviour in the spectral sequence of all the columns $E^{2}_{s,*,*}$ for $1 \leq s \leq 2p^k-1$, as described in corollary \ref{cor:overwhelming.p.odd}.

Some base steps of this induction argument were already carried out above. 

The induction hypothesis at $k$ implies that all modules in the covered range will be trivial after the $E^{2(p-1)p^{k-1}}$-page. This is because all elements in the region either are hit by non-trivial differentials starting in this region or support non-trivial differentials landing in this region or the zeroth column. This implies that there can't be any other non-trivial differential, in particular landing in this region from outside, as this would interfere with the necessary differentials, producing unwanted permanent cycles (recall: the convergence term of the spectral sequence is concentrated in degree $(0,0,*)$).

As our main job will consist of proving that the Leibniz rule is respected, we begin by making a small observation on this.
\begin{rmk}[On the violation of the Leibniz rule]\label{rmk:on.the.violation.of.Leibniz}
    Consider a product of homogeneous classes $\alpha\beta \in E^i$ with $d^i(\beta)=0$ and $d^i(\alpha) \neq 0$; suppose moreover that this induces an isomorphism of $\mathbb{F}_p[\tau^{\pm 1}]$-modules:
    \[
        \mathbb{F}_p[\tau^{\pm 1}]\{ \alpha \} \xrightarrow{d^i}\mathbb{F}_p[\tau^{\pm 1}]\{ d^i(\alpha) \}.
    \]
    Then the Leibniz rule prescribes a differential $d^i(\alpha\beta)=d^i(\alpha)\beta$. This cannot be avoided; however, this second differential could fail to be an isomorphism of $\mathbb{F}_p[\tau^{\pm 1}]$-modules (thus potentially producing non-zero classes in the homology) if either the source or target present some torsion at the $E^i$-page. As any module is born free at the $E^2$ page, this requires involving the source and/or the target with shorter differentials. As if $\alpha \beta$ presents some $\mathbb{F}_p[\tau^{\pm 1}]$-torsion at the $E^i$ page (even $\alpha \beta =0$), by linearity of the differential also $d^i(\alpha)\beta$ has to present the same torsion, so we can just focus on the target: there must be at least one third element $\gamma$ in the $E^2$-page, living between $d^i(\alpha)\beta$ and $\alpha\beta$, that generates a module supporting at a certain page a non-trivial differential hitting a $\mathbb{F}_p[\tau^{\pm 1}]$-module contributing to former\footnote{Recall that higher pages in the spectral sequence are quotients of submodules of lower pages.}. 
\end{rmk}
Thus, specifically, if we identify a differential departing from the horizontal line at a particular page, it suffices to understand what happens in the spectral sequence at lower degrees to ascertain whether the behaviour of the elements in the corresponding column adheres to the Leibniz rule, or if there are other differentials at play, originating from higher degrees.

Thanks to the following claim (confront with claim \ref{clm:in.the.proof.of.prop:pi.MHH(MZ/2)[tau-1]}), we see that the behaviour of powers of $\mu_0$ is in fact determined ``locally'':

\begin{claim} \label{clm:in.the.proof.of.prop:pi.MHH(MZ/p)[tau-1]}
    Consider a number of the form $mp^{j+1}+ap^j$ for some $m \geq 0$ and $0 \leq a \leq p-1$ that lies in the range $p^k \leq mp^{j+1}+ap^j \leq p^{k+1}-1$ (in particular $j\leq k$). Under the induction hypothesis for $k$, suppose that $\mu_0^{mp^{j+1}+ap^j}$  generates a free $\mathbb{F}_p[\tau^{\pm 1}]$-module and has trivial differentials up to (at least) $d^{2p^{j}}$; suppose also that we there are no elements on the zeroth line with degree  $2(mp^{j+1}+ap^j)<d<2(mp^{j+1}+ap^j+p^j-1)$ supporting non-trivial differentials $d^l$ with $l>d-2(mp^{j+1}+ap^j)$ (in other words, non-trivial $d^l$-differentials crossing the $2(mp^{j+1}+ap^j)$-th column).
    
    Then each the product $\mu_0^{mp^{j+1}+ap^j+h}$, for $0 \leq h \leq p^j-1$, generates a free $\mathbb{F}_p[\tau^{\pm 1}]$-module; these elements and all the ones in the columns above them differentiate according to or are the image of differentials determined by the Leibniz rule (hence similarly to what happens for the elements in $E^2_{0,*,*}\{\mu_0^h\}$). In particular, this produces isomorphisms of $E^{i}_{0,*,*}$-modules: 
    \[
        E^{i}_{0,*,*} \xrightarrow{\cdot \mu_0^{mp^{j+1}+ap^j}} E^{i}_{2(mp^{j+1}+ap^j),*,*}
    \]
    for $2 \leq i \leq 2p^j$; moreover all columns $E^{2p^{j-1}+1}_{2(mp^{j+1}+ap^j)+l,*,*}\cong 0$ for $1 \leq l \leq 2p^j-1$.
\end{claim}

The proof is carried out below.

We proceed now with the induction step, assuming the thesis up to some integer $k$ and this claim. We start by observing that $d^{2p^{k-1}}(\mu_0^{p^k})=0$ because of the Leibniz rule, so $\mu_0^{p^k}$ survives to higher pages. The induction hypothesis already fixes the structure and the differentials for everything in positive degree left to $\mu_0^{p^k}$, so $\mu_0^{p^k}$ cannot support non-trivial differentials landing in this region. Given the convergence of the spectral sequence, the only possibilities left are either a non-trivial $d^{2p^k}$-differential to the module generated by $\tau_h$ or a torsion relation $\mu_0^{p^k}=0$. In fact, the former is true, as corollary \ref{cor:power.operations} gives:
\[
    \sigma_* \tau_k =\tau^{-p+1}(\sigma_* \tau_{k-1})^p=\tau^{-p^k+1}\mu_0^{p^k}.
\]
(Also because otherwise $\tau_k$ would be a permanent cycle). By linearity and the convergence of the spectral sequence, this must extend to an isomorphism of modules:
\[
    \mathbb{F}_p[\tau^{\pm 1}]\{\mu_0^{p^k}\} \xrightarrow{d^{2p^k}} \mathbb{F}_p[\tau^{\pm 1}]\{\tau_k\}
\]
As nothing hits $\tau_k$ in the previous pages, we conclude that $\mu_0^{p^k}$ generates a free module. As $E^{2p^k}_{0,2p^k-1,*}$ is generated over $\mathbb{F}_p[\tau^{\pm 1}]$ by the sole $\tau_k$, we conclude an isomorphism:
\[
    E^{2p^k}_{2p^k,0,*} \cong \mathbb{F}_p[\tau^{\pm 1}]\{\mu_0^{p^k}\}
\]
is a free module $\mathbb{F}_p[\tau^{\pm 1}]$ on one generator. As we already remarked, nothing but a transgressive differential can start from position $(2p^k,0,*)$, so this isomorphism has to come from an isomorphism in the $E^2$-page:
\[
    \pi_{*,*}MHH(M\mathbb{Z}/p)[\tau^{-1}]\cong \mathbb{F}_p[\tau^{\pm 1}]\{\mu_0^{p^k}\}.
\]
Observe now that the next non-zero class at this page in the zeroth column is $\xi_{k+1}$, which has degree $2p^{k+1}-2$; as again any new class, right of $\mu_0^{p^k}$, with a nontrivial differential left to hit would have to hit the zeroth column, this class should have degree at least $2p^{k+1}-1$: this excludes the presence of such elements in degree smaller than or equal to $2p^{k+1}-2$. We can then apply claim \ref{clm:in.the.proof.of.prop:pi.MHH(MZ/p)[tau-1]} to $\mu_0^{p^k}$ (with $j=k$) and conclude that the structure of the powers of $\mu_0$ up to $\mu_0^{2p^k-1}$ and their behaviour in the spectral sequence has to comply with our thesis.
Observe also that the conclusion of the claim does not give room for alien classes in degrees between $2p^k$ and $4p^k-1$, because of the vanishing conditions. In degree $4p^k$ we encounter $\mu_0^{2p^k}$; the absence of shorter differentials striking $\mu_0^{p^k}$ (again in the conclusions of the claim) gives that the differential coming from the Leibniz rule produces an isomorphism of $\mathbb{F}_p[\tau^{\pm 1}]$ modules:
\[
    \mathbb{F}_p[\tau^{\pm 1}]\{\mu_0^{2p^k}\} \xrightarrow{d^{2p^k}} \mathbb{F}_p[\tau^{\pm 1}]\{\mu_0^{p^k}\tau_k\}
\]
We can then apply the claim to $\mu_0^{2p^k}$ (again with $j=k$) and draw analogous conclusions.
In particular, we get:
\[
    d^{2p^k}(E^{2p^k}_{4p^k,*,*})=\tau_kE^{2p^k}_{2p^k,*,*}=ker(d^{2p^k}_{|E^{2p^k}_{2p^k,*,*}})
\]
In other words, the column $E^{2p^k}_{2p^k,*,*}$ disappears from the next page.
We proceed exploring all powers of $\mu_0^{p^k}$ and drawing similar conclusions, up to the power $\mu_0^{(p-1)p^k}$. When looking at the next power, $\mu_0^{p\cdot p^k}=\mu_0^{p^{k+1}}$, we could potentially draw the same conclusions, but its image under the $d^{2p^k}$ differential is, because of the Leibniz rule, trivial. So the kernel of the $d^{2p^k}$ differential restricted to the $E^{2p^k}_{(p-1)p^k,*,*}$ column, corresponding to the $E^{2p^k}_{0,*,*}$-submodule generated by $\mu_0^{(p-1)p^k}\tau_k$, is not in its image. This submodule (in particular its generator) has however to die, because of the convergence condition. The rigidity given by the claim \ref{clm:in.the.proof.of.prop:pi.MHH(MZ/p)[tau-1]} does not allow for further elements of degree smaller than or equal to $2p^{k+1}-2$. Now recall that we also have to deal with the element $\xi_{k+1}$ in the zeroth column. There are in fact only two possibilities to kill this $\xi_{k+1}$:
\begin{itemize}
    \item We introduce a new element $\alpha \in \pi_{2p^{k+1}-1,*}MHH(M\mathbb{Z}/p)[\tau^{\pm 1}]$ which survives to the $E^{2p^{k+1}-1}$ page and supports a transgressive non-trivial differential $\alpha \mapsto \xi_{h+1}$. In this case $\mu_0^{(p-1)p^k}\tau_k$ could either be in the image of a differential coming from $\alpha\tau_0$ or from some other element $\beta \in \pi_{2p^{k+1},*}MHH(M\mathbb{Z}/p)[\tau^{\pm 1}]$.
    \item The other option is that $\mu_0^{(p-1)p^{h-1}}\tau_{h-1}$ survives up to the $E^{2(p-1)p^{h-1}}$-page and $d^{2(p-1)p^{h-1}}(\mu_0^{(p-1)p^{h-1}}\tau_{h-1})=\xi_h$. In this case, no other element has to be introduced.
\end{itemize} 
The first one is excluded since we know from \ref{cor:power.operations} that $\sigma_*(\xi_h)=0$ is not transgressive. But then we can conclude also that $E^2_{2p^{k+1}-1,*,*} \cong 0$.

This concludes the proof of the induction step.
\end{proof}

\begin{proof}[Proof of \ref{clm:in.the.proof.of.prop:pi.MHH(MZ/p)[tau-1]}]
    By induction on $0 \leq j \leq k$. 

    The case $j=0$ corresponds to powers of $\mu_0$ with exponent coprime with $p$: here we have nothing to prove, as they support a non-trivial $d^2$ differential by the Leibniz rule, and the spectral sequence starts at the $E^2$ page.

    Suppose the claim holds for all possible indices up to a certain $j-1$. Given $\mu_0^{mp^{j+1}+ap^j}$ as in the claim, we make a finite double induction, first on an index $0 \leq n \leq j-1$; then for each $n$, we consider the elements  $\mu_0^{mp^{j+1}+ap^j+bp^n}$ for $b=1,\ldots,p-1$. They all satisfy the hypothesis of this argument. In fact, because of the Leibniz rule, we know that $\mu_0^{mp^{j+1}+ap^j+bp^n}$ has trivial differentials up to $d^{2p^n}$; moreover, the thesis for $\mu_0^{mp^{j+1}+ap^j+(b-1)p^n}$ (in other words, the hypothesis from which we begin a certain induction step) implies that the $d^{2p^n}$-differential arising from the Leibniz rule:
    \[
        \mu_0^{mp^{j+1}+ap^j+bp^n} \xrightarrow{d^{2p^n}} \mu_0^{mp^{j+1}+ap^j+(b-1)p^n} \tau_n
    \]
    induces an isomorphism of $\mathbb{F}_p[\tau^{\pm 1}]$-modules:
    \[
        \mathbb{F}_p[\tau^{\pm 1}]\{\mu_0^{mp^{j+1}+ap^j+bp^n}\} \xrightarrow{d^{2p^{n}}} \mathbb{F}_p[\tau^{\pm 1}]\{\mu_0^{mp^{j+1}+ap^j+(b-1)p^n} \tau_n\}
    \]
    In particular, the source cannot present $\mathbb{F}_p[\tau^{\pm 1}]$-torsion, in other words, is free. The absence of ``extra'' elements follows from a combination of the hypothesis of the claim (there are no ``extra'' elements supporting very long differentials) and a vanishing result on the columns involved in the argument (there are no ``extra'' elements supporting short differentials), pretty much in the same way as discussed in the main proof. For $b \leq p-1$ the thesis follows by the statement of the claim for $n<j-1$, which we know to hold by induction. Notice in particular that the $d^{2p^n}$ differential is induced by the Leibniz rule:
    \begin{multline*}
        E^{2p^n}_{0,*,*}\{\mu_0^{mp^{j+1}+ap^j+bp^n}\} \cong E^{2p^n}_{2(mp^{j+1}+ap^j+bp^n),*,*} \\
        \xrightarrow{d^{2p^n}} E^{2p^n}_{2(mp^{j+1}+ap^j+(b-1)p^n),*,*} \cong  E^{2p^n}_{0,*,*}\{\mu_0^{mp^{j+1}+ap^j+(b-1)p^n}\}
    \end{multline*}
    surjects onto the ideal generated by $\tau_n$. This implies the vanishing $E^{2p^n+1}_{2(mp^{j+1}+ap^j+(b-1)p^n),*,*} \cong 0$ for $1 \leq b-1 \leq p-2$.
    We can then move on to the next value of $b$. 

    When we get the statement for $b=p-1$, we reach the following situation in low rows: at the $E^{2^{p^n}}$ page, we find, among others, the non-zero elements $\mu_0^{mp^{j+1}+ap^j+(p-1)p^n}\tau_n$, $\mu_0^{mp^{j+1}+ap^j} \xi_{n+1}$ and  $\mu_0^{mp^{j+1}+ap^j} \tau_{n+1}$. The Leibniz rule provides a differential 
    \[
        \mu_0^{mp^{j+1}+ap^j+(p-1)p^n}\tau_n \mapsto \mu_0^{mp^{j+1}+ap^j} \xi_{n+1}.
    \]
    To prove that it is non-trivial, we have to exclude any shorter differential hitting $\mu_0^{mp^{j+1}+ap^j} \xi_{n+1}$ (see remark \ref{rmk:on.the.violation.of.Leibniz}) supported by an element $\alpha$ strictly between the columns on $\mu_0^{mp^{j+1}+ap^j}$ (which is the target) and $\mu_0^{mp^{j+1}+ap^j+(p-1)p^n}$ (where the Leibniz rule intervenes); this however cannot happen. In fact, all the classes we have already determined make the considered region empty after the $2p^n+1$-th page; moreover, no differential starting in this region crosses the $2(mp^{j+1}+ap^j)$-th column (that of $\mu_0^{mp^{j+1}+ap^j}$), so that all elements from powers of $\mu_0$ either hit this column (but not $\mu_0^{mp^{j+1}+ap^j} \xi_{n+1}$) or are null before they could support a differential hitting this column. So they cannot be the $\alpha$ we are looking for. Also, introducing extra elements in $\pi_{*,*} MHH(M\mathbb{Z}/p)[\tau^{-1}]$ would then produce torsion on the horizontal axis (recall that long differentials are forbidden by assumption). So we can confirm that this $d^{2(p-1)p^n}$-differential provided by the Leibniz rule for $\mu_0^{mp^{j+1}+ap^j+(p-1)p^n}\tau_n$ is non-trivial. This differential in fact extends non-trivially to the whole $E^{2(p-1)p^{n}}_{2(mp^{j+1}+ap^j+(p-1)p^n),*,*} \cong E^{2p^n+1}_{2(mp^{j+1}+ap^j+(p-1)p^n),*,*}$, see \ref{rmk:on.the.violation.of.Leibniz}, as it consists of multiples of $\mu_0^{mp^{j+1}+ap^j+(p-1)p^n}\tau_n$. Hence, this whole column is trivial on the next page: $E^{2(p-1)p^{n}+1}_{2(mp^{j+1}+ap^j+(p-1)p^n),*,*} \cong 0$.
    
    Observe in particular that the whole region between degrees $2(mp^{j+1}+ap^j)+1$ and $2(mp^{j+1}+ap^j+p^{n+1})-1$ vanishes after the $E^{2(p-1)p^{n}+1}$ page. Moreover, all the various differentials combined annihilated the $E^{2p^{n}}_{0,*,*}$-submodule of $E^{2p^{n}}_{2(mp^{j+1}+ap^j),*,*}$ generated by $\tau_n$ and $\xi_{n+1}$, exactly as it happens for the zeroth column, at the same pages.
    
    We can then increase $n$ by one: $\mu_0^{mp^{j+1}+ap^j+p^{n+1}}$ supports the non-trivial $d^{2p^{n+1}}$ differential coming from the Leibniz rule, as the elements between source and target do not pose any obstruction, and so on. 

    When one reaches the end of the argument for $n=j-1$ and $b=p-1$, one sees that there is a unique possible structure for the horizontal line, given by successive powers of $\mu_0$. The module $E^{i}_{2(mp^{j+1}+ap^j),*,*}$ is in particular isomorphic to:
    \begin{itemize}
        \item $E^{2p^{j-1}+1}_{2(mp^{j+1}+ap^j),*,*}$ for $2p^{j-1}+1\leq i \leq 2(p-1)p^{j-1}$ (after the quotient by $\tau_{j-1}$)
        \item $E^{2(p-1)p^{j-1}+1}_{2(mp^{j+1}+ap^j),*,*}$, for $2(p-1)p^{j-1}+1 \leq i \leq 2p^{j}$ (after the quotient by $\xi_{j}$) 
    \end{itemize}
    This recovers the isomorphisms with the zeroth column in the corresponding pages. Finally, the various induction passages, combined with what we observed at the various $2(mp^{j+1}+ap^j+bp^n)$ columns, provide the desired vanishing result.
\end{proof}

Observe that in this setting only specific pages (namely those indexed by $2p^k$ for some integer $k$) admit a decomposition as an algebra on the horizontal axis times one on the vertical axis.

\begin{rmk}
    Using the Greenlees spectral sequence \ref{prop:original.Greenlees}, one can calculate the homotopy groups of topological Hochschild homology of the Eilenberg-MacLane spectrum of the field $\mathbb{F}_p$, $p$ any prime, starting from the homotopy groups of the (classical) dual Steenrod algebra \cite[Section 5]{Milnor1967}:
    \[
        \pi_*(\mathcal{A}_{top}(p))\cong \begin{cases}
            \mathbb{F}_2[\tau_i]_{i \geq 0} & \text{ for }p=2\\
            \mathbb{F}_p[\tau_i, \xi_{i+1}]_{i \geq 0}/\langle \tau_i^2 \rangle & \text{ for }p \text{ odd.}
        \end{cases}
    \]
    The element $\tau_i$ is in degree $2p^i-1$ and the element $\xi_i$ is in degree $2p^i-2$.
    Given the great similarity between the structures of the homotopy rings of the classical and motivic dual Steenrod algebras, it is perhaps unsurprising that:
    \[
        \pi_*(THH(H\mathbb{F}_p)) \cong \mathbb{F}_p[\mu_0]
    \]
    with $\mu_0$ in degree 2. 
    One can set up a proof of this following the same steps that were used in this paper for the motivic context, obtaining identical spectral sequences (but for the base ring).
    This is in essence due to $\tau$ being invertible: whenever in this paper two classes where separated in weight, we were able connect them by an adequate power of $\tau$ (see for instance \ref{cor:power.operations}).
\end{rmk}

\clearpage

\clearpage

\printbibliography[heading=bibintoc]

@misc{DHOO2022,
	doi = {10.48550/ARXIV.2204.00441},
	url = {https://arxiv.org/abs/2204.00441},
	author = {Dundas, Bjørn Ian and Hill, Michael A. and Ormsby, Kyle and Østvær, Paul Arne},
	keywords = {Algebraic Geometry (math.AG), Algebraic Topology (math.AT), K-Theory and Homology (math.KT), FOS: Mathematics, FOS: Mathematics, 14F42, 19E15, 19D55},
	title = {Hochschild homology of mod-$p$ motivic cohomology over algebraically closed fields},
	publisher = {arXiv},
	year = {2022},
	copyright = {Creative Commons Attribution 4.0 International},
    note={To appear on Communications of the American Mathematical Society}
}

@article{Milnor1967,
	ISSN = {0003486X},
	URL = {http://www.jstor.org/stable/1969932},
	author = {John Milnor},
	journal = {Annals of Mathematics},
	number = {1},
	pages = {150--171},
	publisher = {Annals of Mathematics},
	title = {The Steenrod Algebra and Its Dual},
	volume = {67},
	year = {1958}
}

@article{Greenlees2016,
    title = {Ausoni–Bökstedt duality for topological Hochschild homology},
    journal = {Journal of Pure and Applied Algebra},
    volume = {220},
    number = {4},
    pages = {1382-1402},
    year = {2016},
    issn = {0022-4049},
    doi = {https://doi.org/10.1016/j.jpaa.2015.09.007},
    url = {https://www.sciencedirect.com/science/article/pii/S0022404915002558},
    author = {J.P.C. Greenlees},
    abstract = {We consider the Gorenstein condition for topological Hochschild homology, and show that it holds remarkably often [7]. More precisely, if R is a commutative ring spectrum and R⟶k is a map to a field of characteristic p then, provided k is small as an R-module, THH(R;k) is Gorenstein in the sense of [11]. In particular, this holds if R is a (conventional) regular local ring with residue field k of characteristic p. Using only Bökstedt's calculation of THH(k), this gives a non-calculational proof of dualities visible in calculations of Bökstedt [9], Ausoni [3], Lindenstrauss and Madsen [17], Angeltveit and Rognes [2] and others.}
}

@book{McCleary2000,
	place={Cambridge},
	edition={2},
	series={Cambridge Studies in Advanced Mathematics},
	title={A User's Guide to Spectral Sequences},
	DOI={10.1017/CBO9780511626289},
	publisher={Cambridge University Press},
	author={McCleary, John},
	year={2000},
	collection={Cambridge Studies in Advanced
Mathematics}
}

@misc{HKO2013,
	doi = {10.48550/ARXIV.1305.5690},
	url = {https://arxiv.org/abs/1305.5690},
	author = {Hoyois, Marc and Kelly, Shane and Østvær, Paul Arne},
	keywords = {Algebraic Geometry (math.AG), Algebraic Topology (math.AT), K-Theory and Homology (math.KT), FOS: Mathematics, FOS: Mathematics},
	title = {The motivic Steenrod algebra in positive characteristic},
	publisher = {arXiv},
	year = {2013},
	copyright = {arXiv.org perpetual, non-exclusive license}
}

@article{DugIsa2005,
	doi = {10.2140/agt.2005.5.615},
	url = {https://doi.org/10.2140/agt.2005.5.615},
	year = 2005,
	month = {6},
	publisher = {Mathematical Sciences Publishers},
	volume = {5},
	number = {2},
	pages = {615--652},
	author = {Daniel Dugger and Daniel C Isaksen},
	title = {Motivic cell structures},
	journal = {Algebraic {\&} Geometric Topology}
}

@book{Lurie2009,
	author = "Lurie, Jacob",
	title = "{Higher Topos Theory}",
	publisher = "Princeton University Press",
	address = "Princeton, NJ",
	series = "Annals of mathematics studies",
	year = "2009"
}

@book{Lurie2017,
	author = "Lurie, Jacob",
	title = "{Higher Algebra}",
	publisher = "Online self-publishing",
	year = "2017",
	url = "https://people.math.harvard.edu/~lurie/papers/HA.pdf",
	urldate ={2023-02-17}
}

@article{Arthan1983,
	title={Localization of stable homotopy rings},
	volume={93},
	DOI={10.1017/S0305004100060588},
	number={2},
	journal={Mathematical Proceedings of the Cambridge Philosophical Society},
	publisher={Cambridge University Press},
	author={Arthan, R. D.},
	year={1983},
	pages={295–302}
}

@misc{Lurie2018,
	title = {Kerodon},
	author = {Jacob Lurie},
	howpublished = {\url{https://kerodon.net}},
	year = {2018}
}

@misc{BEO2020,
	doi = {10.48550/ARXIV.2003.04006},
	url = {https://arxiv.org/abs/2003.04006},
	author = {Bachmann, Tom and Elmanto, Elden and Østvær, Paul Arne},
	keywords = {K-Theory and Homology (math.KT), Algebraic Topology (math.AT), FOS: Mathematics, FOS: Mathematics},
	title = {On étale motivic spectra and Voevodsky's convergence conjecture},
	publisher = {arXiv},
	year = {2020},
	copyright = {arXiv.org perpetual, non-exclusive license}
}

@book{Land2021,
	title={Introduction to Infinity-Categories},
	author={Markus Land},
	isbn={9783030615246},
	series={Compact Textbooks in Mathematics},
	url={https://books.google.de/books?id=1sMqEAAAQBAJ},
	year={2021},
	publisher={Springer International Publishing}
}

@book{Cisinski2019,
    place={Cambridge},
    series={Cambridge Studies in Advanced Mathematics},
    title={Higher Categories and Homotopical Algebra},
    publisher={Cambridge University Press},
    author={Cisinski, Denis-Charles},
    year={2019},
    collection={Cambridge Studies in Advanced Mathematics}
}

\end{document}